\newtheorem{thm}{Theorem}[section]
\newtheorem{pro}[thm]{Proposition}
\newtheorem{lem}[thm]{Lemma}
\newtheorem{cor}[thm]{Corollary}
\theoremstyle{definition}
\newtheorem*{ackn}{Acknowledgements}
\theoremstyle{remark}
\newtheorem{rmk}[thm]{Remark}
\newtheorem*{cla}{Claim}
\newtheorem{prb}{Problem}
\newtheorem{que}[prb]{Question}
\numberwithin{equation}{section}
\def\J{\mathscr{J}}
\def\D{\mathscr{D}}
\def\R{\mathscr{R}}
\def\L{\mathscr{L}}
\def\H{\mathscr{H}}
\def\cE{\mathcal{E}}
\def\es{\varnothing}
\def\lnor{\trianglelefteq}
\def\ol#1{\overline{#1}}
\def\ig#1{\mathsf{IG}(#1)}
\def\pre#1#2{\langle #1 \; | \; #2 \rangle}
\def\Sch{Sch\"u\-tzen\-ber\-ger }
\begin{document}


\title[Free idempotent generated semigroups]%
{Free idempotent generated semigroups: \\ The word problem and structure via gain graphs} 


\author{IGOR DOLINKA}

\address{Department of Mathematics and Informatics, University of Novi Sad, Trg Dositeja Obradovi\'ca 4,
21101 Novi Sad, Serbia}

\email{dockie@dmi.uns.ac.rs}



\subjclass[2010]{Primary 20M05; Secondary 20F10}


\keywords{Free idempotent generated semigroup; Biordered set; Word problem; Howson property; Coset intersection problem; \Sch group}




\begin{abstract}
Building on the previous extensive study of Yang, Gould and the present author, we provide a more precise insight into the 
group-theoretical ramifications of the word problem for free idempotent generated semigroups over finite biordered sets. We 
prove that such word problems are in fact equivalent to the problem of computing intersections of cosets of certain subgroups
of direct products of maximal subgroups of the free idempotent generated semigroup in question, thus providing decidability
of those word problems under group-theoretical assumptions related to the Howson property and the coset intersection property.
We also provide a basic sketch of the global semigroup-theoretical structure of an arbitrary free idempotent generated semigroup,
including the characterisation of Green's relations and the key parameters of non-regular $\D$-classes. In particular, we prove
that all \Sch groups of $\ig{\cE}$ for a finite biordered set $\cE$ must be among the divisors of the maximal subgroups of $\ig{\cE}$.
\end{abstract}


\maketitle



\section{Introduction}\label{sec:intro}

Let $S$ be a semigroup, and let us denote by $E=E(S)$ the set of its idempotents. A common way to record the 
``idempotent structure'' of $S$ is to consider the partial algebra $\cE_S=(E,\cdot)$ obtained by retaining only the products
$ef$ such that $\{ef,fe\}\cap\{e,f\} \neq \es$. (Note that if, for example, $ef\in\{e,f\}$ then $fe$ also must belong to $E$,
so either both products are recorded or none of them. In the former case we say that $\{e,f\}$ is a \emph{basic pair}.) 
With such a partial algebra at hand, one can naturally introduce two quasi-orders $\leq_\ell$ and $\leq_r$ on $E$ given by 
$e\leq_\ell f$ if and only if $ef=e$, and $e\leq_r f$ if and only if $fe=e$; furthermore, it is easily seen that the relation 
$\leq=\leq_\ell\cap\leq_r$ is a partial order on $E$ and this is the so-called \emph{natural order} \cite{HoBook} on the set of 
idempotents of the semigroup $S$. Because of the presence of these two quasi-orders, the partial algebra $\cE_S$ is called
the \emph{biordered set} \cite{Na,Ea2} of the semigroup $S$. We immediately note that the biordered set of a finite semigroup
is of course finite, but the converse is not true: there are finite biordered sets that do not arise from any finite semigroup 
\cite{Ea1}.

In his extensive study \cite{Na}, Nambooripad provided an axiomatic approach to abstract biordered sets, and his axioms were
proved to be complete by Easdown \cite{Ea2} in the sense that a partial algebra $\cE=(E,\cdot)$ satisfies these axioms if and
only if it is isomorphic to a ``concrete'' biordered set $\cE_S$ of some semigroup $S$, one of the most striking aspects being
that the axiomatisation is actually finite. Hence, biordered sets serve as a convenient vehicle to record quite an amount
of intrinsic structural (and even ``geometric'', in a sense) information about a semigroup $S$, especially if the latter is 
idempotent generated, that is, if $S=\langle E(S) \rangle$. Idempotent generated semigroups abound in algebra, and therefore
constitute a very important area worth a detailed study. Let us just mention the classical result of Howie \cite{Ho} who proved
that the full semigroups of singular transformations on a finite set are idempotent generated; an analogous result holds for
the semigroup of all singular matrices over a field (Erdos \cite{Er}), which was later expanded to matrices over
division rings (Laffey \cite{La}). Inspired by this, Putcha \cite{Pu} characterised all idempotent generated reductive linear 
algebraic monoids \cite{PuBook,ReBook}.

To gain a deeper insight into the general structure of idempotent generated semigroups, one introduces the \emph{free idempotent
generated semigroup over a biordered set} $\cE$, given by the presentation 
$$
\ig{\cE} = \pre{\ol{E}}{\ol{e}\ol{f}=\ol{e\cdot f}\text{ whenever }\{e,f\}\text{ is a basic pair in }\cE}.
$$
(Here, just as in \cite{YDG2}, the bar notation serves the purpose of distinguishing between the actual elements of $\cE$ and
their products, and, on the other hand, the elements of $\ig{\cE}$ which are equivalence classes consisting of words over $E$
taken as an alphabet. So, for example, $\ol{e}\ol{f}$ denotes a multiplication happening in $\ig{\cE}$, while $e\cdot f$ refers
to a product in $\cE$, with $\ol{e\cdot f}$ being the corresponding generating element in $\ig{\cE}$.) Loosely speaking, $\ig{\cE}$
is the ``free-est'' semigroup with biordered set $\cE$; more precisely, it is the ``semigroup part'' of the free (initial) object in 
the category whose objects are pairs $(S,\phi)$ where $S$ is a semigroup and $\phi:\cE\to\cE_S$ is an isomorphism of biordered 
sets, while the morphisms from $(S,\phi)$ to $(T,\psi)$ are semigroup homomorphisms $\theta:S\to T$ such that $\phi\theta=\psi$ 
(implying that the restriction of $\theta$ to $E(S)$ is a biordered set isomorphism $\cE_S\to\cE_T$). 

For a long time, investigations of free idempotent generated semigroups were centred around their maximal subgroups. At least 
from the 80s of the last century, a folklore conjecture was in circulation stating that the all maximal subgroups of $\ig{\cE}$ 
must be free for any biorder $\cE$. We refer to \cite{NP,Pa1,Pa2} for a sampler of several partial results in this vein, as well
as \cite{McE}, where the conjecture was ``officially'' recorded. After being dormant for some time, the topic was greatly revived
by the seminal paper of Brittenham, Margolis and Meakin \cite{BMM}, who eventually refuted the freeness conjecture (they found 
a 72-element semigroup $S$ such that $\ig{\cE_S}$ has $\mathbb{Z}\oplus\mathbb{Z}$ among its maximal subgroups), and in fact 
showed that the study of the structure of $\ig{\cE}$ legitimately belongs, besides abstract algebra, to the realm of geometry 
and algebraic topology. Namely, they proved that the maximal subgroups of $\ig{\cE}$ are in fact fundamental groups of certain 
cell 2-complexes constructed from the biorder $\cE$.

However, the real turning point came with the paper of Gray and Ru\v skuc \cite{GR1} who used Reidemeister-Schreier-type rewriting
methods for subgroups of monoids \cite{Ru} to obtain explicit presentations for maximal subgroups of $\ig{\cE}$. Applying this,
they were able to show that the complete opposite of the former freeness conjecture is true: \emph{every} group can arise as a
maximal subgroup of $\ig{\cE}$ for a suitably chosen biorder $\cE$. Later, the author and Ru\v skuc proved \cite{DR} that biordered
sets of idempotent semigroups (bands) suffice for this aim, and that all finitely presented groups can be realised by using only biorders 
of finite bands. A number of papers followed suit with the aim of computing the maximal subgroups of $\ig{\cE}$ for biordered sets 
of some of the most ``popular'' finite semigroups, such as the transformation monoids \cite{GR2} and partial transformation monoids
\cite{Do2} on finite sets, and full linear monoids over division rings \cite{DG}.

The shift of the focus of research in this area came with the paper \cite{DGR}, which was the first to study the word problem for 
semigroups $\ig{\cE}$ and their structural aspects other than the maximal subgroups. It was shown there that, given a finite biorder
$\cE$, there exists an algorithm deciding whether a given word represents a regular element of $\ig{\cE}$, and, furthermore, if the
word problem is decidable in the maximal subgroups of $\ig{\cE}$, then there exists an algorithm which, given two words representing 
regular elements, decides their equality in $\ig{\cE}$. On the other hand, a biordered set $\cE$ arising from a finite semigroup was 
exhibited such that the word problem for $\ig{\cE}$ is undecidable (even though all subgroups of $\ig{\cE}$ have decidable word problems). 
Also, it was shown that any group can arise as a \Sch group of a non-regular $\D$-class in $\ig{\cE}$.

This paper is a direct sequel to \cite{YDG2}, the most recent contribution pertaining to the word problem and structure of semigroups
of the form $\ig{\cE}$. It was shown there that in the case of a finite biorder $\cE$, the word problem for $\ig{\cE}$ is in fact
equivalent to an algorithmic problem in group theory (thus completely eliminating the original semigroup-theoretical setting of 
the problem), namely, to a specific class of CSPs (Constraint Satisfaction Problems) with respect to certain rational subsets of
direct products of maximal subgroups of $\ig{\cE}$ (with both the subsets and the groups being algorithmically computable from $\cE$).
This provided a way to prove that $\ig{\cE}$ has decidable word problem for a wide class of finite biorders $\cE$, including those of
transformation monoids $T_n$ and partial transformation monoids $PT_n$; also, it implied structural features for
$\ig{\cE}$ such as being weakly abundant (aka Fountain) \cite{Lawson,MS}. Here, we first use several remarks from algebraic graph theory
(about the so-called gain graphs) to show in Section \ref{sec:gain} that the rational subsets of direct products just mentioned are 
in fact (right) cosets of certain finitely generated subgroups which are effectively computable for finite $\cE$. This immediately
makes it possible to prove decidability results for the word problem of $\ig{\cE}$ for an even wider class of finite biorders $\cE$
related to the Howson property and the coset intersection property for direct products of maximal subgroups of $\ig{\cE}$. In 
Section \ref{sec:str} we use the this machinery to discover the basics of the general 
semigroup-theoretical structure of $\ig{\cE}$ for finite $\cE$ such as the characterisation of the Green's relations, the shape and
size of the non-regular $\D$-classes, etc. In particular, we prove a somewhat unexpected result: every \Sch group of $\ig{\cE}$
must divide (= be a quotient of a subgroup) some maximal subgroup of $\ig{\cE}$, meaning that \Sch groups of $\ig{\cE}$ are not
completely independent of their maximal subgroups (contrary to the conjecture that was suggested in the final section of \cite{DGR}, 
albeit in an implicit way). We finish by posing several problems for future investigations.


\section{Preliminaries}\label{sec:prelim}

In this section we gather, in a succinct fashion, all the necessary prerequisites for the remaining sections. This includes both
basics of semigroup theory, and also a summary of the pertinent concepts and results from \cite{YDG2}.

\subsection{Basic notions of semigroup theory}\label{subsec:basic}

Our basic reference for semigroup theory is \cite{HoBook}; here we give an abridged exposition of the main concepts we need,
referring to this monograph for details.

Perhaps the most fundamental tool in dealing with the structure of semigroups are \emph{Green's relations} $\R,\L,\H\,\D,\J$
classifying the elements according to the right/left/two-sided (principal) ideals they generate. For a semigroup $S$ and $a,b\in S$ 
we define:
$$
a\;\R\; b \Leftrightarrow aS^1=bS^1, \quad a\;\L\; b \Leftrightarrow S^1a=S^1b,\quad a\;\J\; b \Leftrightarrow
S^1aS^1=S^1bS^1,
$$
where $S^1$ denotes $S$ with an identity element adjoined (unless $S$ already has one). We let $\H=\R\cap\L$ and let $\D=\R\vee\L$
be the least equivalence on $S$ containing both $\R$ and $\L$. It is well known that $\R\circ\L=\L\circ\R$ holds in
any semigroup, so $\D$ coincides with the latter relational composition. Also, $\R$ is always a left congruence, while $\L$
is a right congruence. In general we have $\H\subseteq \R,\L\subseteq\D\subseteq\J$. There is a large class of semigroups
for which $\D=\J$, including all finite (and indeed all periodic) semigroups. For $\mathscr{K}\in\{\H,\R,\L,\D,\J\}$ we denote 
the $\mathscr{K}$-class containing $a\in S$ by $K_a$ (where $K\in\{H,R,L,D,J\}$, respectively). It follows from \cite[Lemma 2.4]{YDG2}
that $D_a=J_a$ whenever $a$ is a regular element of $\ig{\cE}$, where $\cE$ is a finite biordered set (see below for the 
semigroup-theoretical notion of regularity). Later in this paper we shall see that in fact $\D=\J$ holds for any such free
idempotent generated semigroup $\ig{\cE}$.

For a biordered set $\cE$ we abuse the notation just introduced and denote by $\R$ and $\L$ the equivalence relations induced by the 
quasi-orders $\leq_r$ and $\leq_\ell$, respectively. However, it is quite straightforward to show that for any semigroup $S$ we
have $e\;\R\;f$ in $\cE_S$ if and only if $e\;\R\;f$ in $S$, and also $e\;\L\;f$ in $\cE_S$ if and only if $e\;\L\;f$ in $S$.
Therefore, bearing in mind that every abstract biordered set is essentially a biordered set of some semigroup \cite{Ea2}, there is
no harm in making this notational blur between these equivalences in biorders and semigroups. Furthermore, if we define in biordered
sets a third equivalence $\D=\R\vee\L$, and if $S$ is idempotent generated, then the previous remark supporting the identification of
the $\D$ relations in $\cE_S$ and $S$ also holds by the results from \cite{DGR,FG}. In other words, for idempotent generated semigroups
$S$ we have $e\;\D\;f$ in $\cE_S$ if and only if $e\;\D\;f$ in $S$. However, we stress that in general biordered sets fail to satisfy
$\D=\R\circ\L=\L\circ\R$.

There is a natural partial order $\leq$ defined on the set of all $\J$-classes of a semigroup $S$ given by $J_a\leq J_b$ if and only if
$S^1aS^1\subseteq S^1bS^1$. (In a similar way, one can partially order the sets of $\R$-classes and of $\L$-classes of a semigroup,
respectively.) Clearly, the partial order $\leq$ induces a quasi-order on the set of all $\D$-classes of $S$ (which then turns into 
a partial order when $\D=\J$).

An element $a\in S$ is \emph{regular} if there exists $b\in S$ such that $aba=a$. An equivalent statement of this property is that $a$
has an \emph{inverse}: and element $a'\in S$ such that $aa'a=a$ and $a'aa'=a'$ (since $aba=a$ implies that $bab$ is an inverse of $a$).
Any $\D$-class of a semigroup consists either entirely of regular or non-regular elements, and in this sense we speak of \emph{regular} 
and \emph{non-regular $\D$-classes}, respectively. In fact, regular $\D$-classes coincide with $\D$-classes containing idempotents.
Furthermore, for each idempotent $e$ its $\H$-class $H_e$ is a group with identity $e$; this is a maximal subgroup of $S$,
and all maximal subgroups of $S$ arise in this way. Any two maximal subgroups belonging to a single (regular) $\D$-class
must be isomorphic, so that the maximal subgroup is an invariant of a regular $\D$-class up to isomorphism.

A semigroup $S$ is called \emph{simple} if its only ideal is $S$ itself, and a semigroup with zero $0\in S$ is called \emph{0-simple} 
if $\{0\}$ and $S$ are its only ideals and $S^2\neq\{0\}$ (that is, the multiplication in $S$ is not null). A (0-)simple semigroup is
\emph{completely (0-)simple} if it has a \emph{primitive idempotent}, an idempotent which is minimal (in the natural order defined above) 
among the non-zero ones. There is a particularly nice and handy description of completely 0-simple semigroups provided by the 
foundational result called the \emph{Rees-Su\v skevi\v c Theorem}: a semigroup $S$ is completely 0-simple if and only if it is
isomorphic to a semigroup of the form $\mathcal{M}^0[G;I,\Lambda;P]$ (called the \emph{Rees matrix semigroup}) described below.
Here, $G$ is a group, $I,\Lambda$ are two index sets (labelling the $\R$-classes and the $\L$-classes, respectively), and the 
\emph{sandwich matrix} $P=(p_{\lambda i})$ is a $\Lambda\times I$ matrix over $G\cup\{0\}$ in which every row and every column contains 
at least one non-zero entry. Elements of $\mathcal{M}^0[G;I,\Lambda;P]$ are the zero element $0$ and triples $(i,g,\lambda)$ such that 
$i\in I$, $g\in G$ and $\lambda\in\Lambda$. For the multiplication of these elements we have the rules $00=0(i,g,\lambda)=(i,g,\lambda)0=0$ and
$$
(i,g,\lambda)(j,h,\mu) = \left\{\begin{array}{ll}
(i,gp_{\lambda j}h,\mu) & \text{if }p_{\lambda j}\neq 0,\\
0 & \text{otherwise.}
\end{array}\right.
$$
Arguably one of the most important ways in which completely 0-simple semigroups arise in arbitrary semigroups is via their \emph{principal
factors}. Namely, if $J$ is a $\J$-class of a semigroup $S$ then we define its principal factor $\ol{J}=J\cup\{0\}$ with multiplication
$$
x\cdot y = \left\{\begin{array}{ll}
xy & \text{if }x,y,xy\in J,\\
0 & \text{otherwise.}
\end{array}\right.
$$
Now $\ol{J}$ is either $0$-simple or has null multiplication otherwise. If $S$ has finitely many 
idempotents then, as explained in \cite[Lemma 2.4]{YDG2}, if $J$ contains idempotents then it consists of only one (regular) $\D$-class, 
and the corresponding principal factor is completely $0$-simple. In particular, this is the case with the $\D$-classes of $\ig{\cE}$ 
when $\cE$ is a finite biorder; we will return to this important remark shortly.

\subsection{$\ig{\cE}$, part one: the basic properties}\label{subsec:ige1}

We provide a list of some known facts about free idempotent generated semigroups $\ig{\cE}$. Throughout these statements we assume that 
the semigroup $S$ is idempotent generated, $S=\langle E(S)\rangle$.
\begin{itemize}
\item[(1)] $E(\ig{\cE})=\{\ol{e}:\ e\in E\}$, so that the biordered set of $\ig{\cE}$ is isomorphic to $\cE$ (\cite{Ea2}).
\item[(2)] The map $\ol{e}\mapsto e$ extends to a surjective homomorphism $\varphi:\ig{\cE_S}\to S$ (this was noted in \cite{GR1}).
\item[(3)] For any $e\in E$, $\varphi$ maps the $\R$-class ($\L$-class) of $\ol{e}$ in $\ig{\cE_S}$ onto the $\R$-class (resp.\ 
$\L$-class) of $e$ in $S$ (this follows from \cite{FG}).
\item[(4)] Consequently, given a regular $\D$-class $D=D_{\ol{e}}$ in $\ig{\cE_S}$ there is a bijection between the set of $\R$-classes
($\L$-classes) of $D$ and the corresponding set in the $\D$-class $D_e$ in $S$. 
\end{itemize}
Therefore, given a (finite) biordered set $\cE$, we already know a substantial amount of information about the regular $\D$-classes that
$\ig{\cE}$ will have. In more detail, if $D_1,\dots,D_m$ is the list of all $\D$-classes of $\cE$, then $\ig{\cE}$ will also have precisely 
$m$ regular $\D$-classes, say $D'_1,\dots, D'_m$, and the ``shapes'' of corresponding $\D$-classes will be the same: for $1\leq k\leq m$, 
if $I_k$ is an index set enumerating $D_k/\R$ and $\Lambda_k$ enumerates $D_k/\L$, then the number of $\R$-($\L$-)classes in $D'_k$
will be $|I_k|$ (resp.\ $|\Lambda_k|$). For this reason, there is no harm in slightly abusing notation and assuming that $I_k$ and 
$\Lambda_k$ also enumerate the $\R$-classes (resp.\ $\L$-classes) of $D'_k$. In addition, we have already remarked that each $D'_k$ will 
coincide with its $\J$-class and so the $\J$-order imposes a partial order on the regular $\D$-classes of $\ig{\cE}$.

Let us also note at this point that the explicit presentations for maximal subgroups in regular $\D$-classes of $\ig{\cE}$ (for 
arbitrary biorders $\cE$) were found by Gray and Ru\v skuc in \cite{GR1}. It was later shown in \cite[Theorem 3.10]{DGR} that there
is actually an algorithm which takes a finite biordered set $\cE$ as input and outputs finite presentations for maximal subgroups
in all regular $\D$-classes of $\ig{\cE}$.

We now recall some of the main results of \cite{DGR} implying that if $\cE$ is finite then there exists an algorithm deciding if the given
word from $E^+$ represents a regular element in $\ig{\cE}$; furthermore, if all maximal subgroups of $\ig{\cE}$ have decidable word problems,
there exists an algorithm which, given $u,v\in E^+$ representing regular elements of $\ig{\cE}$, decides whether $\ol{u}=\ol{v}$ (this is
what we mean by the phrase that the `regular part of the word problem is decidable').

\begin{thm}\label{thm:DGR}
\begin{itemize}
\item[(i)] For any $w\in E^+$, $\ol{w}$ is a regular element of $\ig{\cE}$ if and only if $w$ can be factorised as
$$
w = uev
$$
such that $e\in E$ and $\ol{ue}\;\L\;\ol{e}\;\R\;\ol{ev}$. In such a case, $\ol{e}\;\D\;\ol{w}$.
\item[(ii)] If $\cE$ is finite, then there exists an algorithm which, given $e\in E$ and $v\in E^\ast$, decides whether
$\ol{ev}\;\R\;\ol{e}$ holds in $\ig{\cE}$ and if so, returns an $f\in E$ such that $\ol{ev}\;\L\;\ol{f}$.
Dually, there exists an algorithm which, given $e\in E$ and $u\in E^\ast$, decides whether
$\ol{ue}\;\L\;\ol{e}$ holds in $\ig{\cE}$ and if so, returns a $g\in E$ such that $\ol{ue}\;\R\;\ol{g}$.
Consequently, there exists an algorithm which, given $w\in E^+$, decides whether $\ol{w}$ is a regular
element of $\ig{\cE}$, and if so, returns $e,f\in E$ such that $\ol{e}\;\R\;\ol{w}\;\L\;\ol{f}$.
\end{itemize}
\end{thm}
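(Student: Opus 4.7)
For the ``if'' direction of~(i), the plan is to invoke the Miller--Clifford \emph{locate product} principle: whenever $a\,\L\,e\,\R\,b$ with $e$ an idempotent, the product $ab$ is regular and $\D$-equivalent to $e$. Applying this with $a=\ol{ue}$, $b=\ol{ev}$, and the idempotent $\ol{e}$, the equality
$$
\ol{ue}\cdot\ol{ev}\;=\;\ol{u}\,\ol{e}\,\ol{e}\,\ol{v}\;=\;\ol{uev}\;=\;\ol{w}
$$
(valid because $\ol{e}^2=\ol{e}$) delivers both the regularity of $\ol{w}$ and $\ol{w}\,\D\,\ol{e}$. For the ``only if'' direction I would proceed by induction on $n=|w|$. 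Writing $w=e_1\cdots e_n$, consider the descending chain of $\R$-classes of prefixes $R_{\ol{e_1\cdots e_i}}$ and the dual chain of $\L$-classes of suffixes. Regularity of $\ol{w}$ makes $D_{\ol{w}}$ a regular $\D$-class whose principal factor is completely $0$-simple, so $R_{\ol{w}}\cap L_{\ol{w}}$ contains an idempotent. Choosing the smallest index $i$ at which $\ol{e_1\cdots e_i}$ is already in $R_{\ol{w}}$, and using items~(3)--(4) of Section~\ref{subsec:ige1} to transport biordered-set relations between $\cE$ and $\ig{\cE}$, one shows that $e_i$ itself can serve as the anchor $e$: the prefix $\ol{e_1\cdots e_{i-1}e_i}$ is forced to be $\L$-equivalent to $\ol{e_i}$, and a symmetric analysis of the suffix chain yields the $\R$-condition at the same letter.

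For part~(ii), the algorithm scans $v=f_1\cdots f_k$ left to right, maintaining after step~$j$ an ``$\L$-witness'' $h_j\in E$ with the invariant that $\ol{e f_1\cdots f_j}$ lies in $R_{\ol{e}}\cap L_{\ol{h_j}}$; initialise $h_0=e$. The update $h_j\mapsto h_{j+1}$ upon reading $f_{j+1}$ is a finite lookup inside the $\D$-class of $\cE$ corresponding to $D_{\ol{e}}$: test whether the biorder product $h_j\cdot f_{j+1}$ remains in the same $\R$-class of $\cE$ (equivalently of $\ig{\cE}$, by item~(3)), and if so take $h_{j+1}$ to be an idempotent $\L$-equivalent to $h_j\cdot f_{j+1}$. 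If some test fails, output ``no''; otherwise $\ol{ev}\,\R\,\ol{e}$ with witness $f:=h_k$. The dual algorithm handles the $\L$-case, and the regularity test for $w\in E^+$ follows by iterating over all $n$ positions of $w$ and appealing to part~(i).

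The principal obstacle is the forward direction of~(i): one must show that regularity of $\ol{w}$ actually forces an idempotent \emph{among the letters of $w$} --- not merely somewhere inside $D_{\ol{w}}$ --- to play the role of the anchor. The key input here is the result of Fitz-Gerald that $\R$- and $\L$-classes of $\cE_S$ coincide with those of $S$ whenever $S$ is idempotent generated, which lets one read $\R$- and $\L$-transitions in $\ig{\cE}$ directly off the biorder $\cE$ and then transfer the completely $0$-simple structure of the principal factor of $D_{\ol{w}}$ back to a statement about letters of $w$. A secondary algorithmic concern in~(ii) is effectiveness of each step; this is automatic once $\cE$ is finite, because all basic-pair products and $\R$- and $\L$-classes in $\cE$ are enumerable directly from the finite partial algebra.
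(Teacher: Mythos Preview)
This theorem is not proved in the present paper at all: it is stated in the preliminaries (Section~\ref{subsec:ige1}) as a summary of results quoted from \cite{DGR}, introduced by the sentence ``We now recall some of the main results of \cite{DGR}\ldots''. There is therefore no in-paper proof to compare your proposal against; the arguments you would need to benchmark are in \cite{DGR} (specifically their Theorems~3.6, 4.2 and 6.5).

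That said, a few comments on your sketch as it stands. The ``if'' direction of~(i) via Miller--Clifford is fine. The ``only if'' direction, however, has a real gap: you pick the smallest $i$ with $\ol{e_1\cdots e_i}\in R_{\ol{w}}$ and then assert that ``a symmetric analysis of the suffix chain yields the $\R$-condition at the same letter''. There is no reason a priori that the index coming from the prefix chain coincides with the one coming from the suffix chain, and bridging this is exactly the nontrivial content of the result (in \cite{DGR} this goes through a careful analysis of how a single defining relation of $\ig{\cE}$ can change the decomposition, not through a one-shot index argument). Your appeal to items~(3)--(4) of Section~\ref{subsec:ige1} and to Fitz-Gerald does not by itself force the two indices to agree.

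For part~(ii), the update rule ``test whether the biorder product $h_j\cdot f_{j+1}$ remains in the same $\R$-class of $\cE$'' is problematic because $\{h_j,f_{j+1}\}$ need not be a basic pair, so $h_j\cdot f_{j+1}$ may be undefined in $\cE$. The algorithm in \cite{DGR} instead works via sandwich sets (or, equivalently, via passing to idempotents below both $h_j$ and $f_{j+1}$ in the biorder), which is what makes each step a genuine finite lookup in $\cE$. Your high-level shape (left-to-right scan maintaining an $\L$-witness) is correct, but the transition function needs this extra biorder ingredient.
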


As remarked in \cite[Remark 2.6]{YDG2}, if $\ol{e}\;\R\;\ol{ev}$ then for any prefix $v'$ of $v$ and any word $z\in E^*$ we have 
$\ol{ze}\;\R\;\ol{zev'}$ (analogously, $\ol{e}\;\L\;\ol{ue}$ implies $\ol{ez}\;\L\;\ol{u'ez}$ for any suffix $u'$ of $u$ and arbitrary
$z\in E^*$). In particular, if the factorisation $w=uev$ is as in (i) above then $\ol{ue}\;\R\;\ol{w}\;\L\;\ol{ev}$.
The second part of this remark shows that the converse of the statement (i) from the previous theorem holds for finite
biorders, and the relevance of this in the present paper makes it worth recording as a separate statement.

\begin{lem}\label{lem:conv}
Let $\cE$ be a finite biordered set. If $w\in E^+$ has a factorisation of the form $w=uev$ such that $\ol{e}\;\D\;\ol{w}$ then $\ol{w}$ 
represents a regular element of $\ig{\cE}$ and we have $\ol{ue}\;\L\;\ol{e}\;\R\;\ol{ev}$.
\end{lem}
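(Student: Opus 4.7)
Overall, it suffices to establish the second assertion: once $\ol{ue}\,\L\,\ol{e}\,\R\,\ol{ev}$ is known, the factorisation $w = uev$ matches the hypothesis of Theorem~\ref{thm:DGR}(i) and the regularity of $\ol{w}$ follows automatically. My plan is to first show that all four elements $\ol{e}, \ol{ue}, \ol{ev}, \ol{w}$ lie in a common $\J$-class $J$, and then to exploit the completely $0$-simple structure of its principal factor together with the idempotency $\ol{e}^2 = \ol{e}$.

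For the $\J$-equivalence, I would begin by noting that $\ol{e}$ is regular, so by \cite[Lemma 2.4]{YDG2} the hypothesis $\ol{e}\,\D\,\ol{w}$ upgrades to $\ol{e}\,\J\,\ol{w}$. The factorisations $\ol{ue} = \ol{u}\cdot\ol{e}$ and $\ol{w} = \ol{ue}\cdot\ol{v}$ then produce the squeeze $\ol{e} \geq_\J \ol{ue} \geq_\J \ol{w}$, which combined with $\ol{e}\,\J\,\ol{w}$ forces $\ol{ue}\,\J\,\ol{e}$; the symmetric argument gives $\ol{ev}\,\J\,\ol{e}$. Let $J$ denote this common $\J$-class.

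For the principal-factor step, finiteness of $\cE$ ensures that $\ig{\cE}$ has only finitely many idempotents, so $J$ contains only finitely many (and at least $\ol{e}$); by the fact recalled in Subsection~\ref{subsec:basic} the principal factor $\overline{J}$ is then completely $0$-simple, with some Rees matrix representation $\mathcal{M}^0[G;I,\Lambda;P]$. The crux is to exploit $\ol{e}^2 = \ol{e}$, which gives in $\ig{\cE}$ the identities
$$
\ol{e}\cdot\ol{ev} = \ol{e}^2\,\ol{v} = \ol{ev}, \qquad \ol{ue}\cdot\ol{e} = \ol{u}\,\ol{e}^2 = \ol{ue};
$$
both products live in $J$ and the identities therefore persist in $\overline{J}$. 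In any completely $0$-simple Rees matrix semigroup, a non-zero equation $xy = y$ compels $x$ and $y$ to share their first Rees coordinate (so $x\,\R\,y$), and dually $yx = y$ forces $x\,\L\,y$. Reading this off yields $\ol{e}\,\R\,\ol{ev}$ and $\ol{ue}\,\L\,\ol{e}$, completing the second assertion, whence Theorem~\ref{thm:DGR}(i) immediately delivers the first.

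The main obstacle is really only the bookkeeping needed to legitimately invoke the principal-factor machinery in this (infinite) semigroup, and this is exactly the role of the finiteness assumption on $\cE$; everything beyond that is a brief formal manipulation inside the Rees matrix. A coordinate-free way of packaging the last step is to record the small \emph{idempotent stability} observation that in any semigroup whose $\J$-classes containing idempotents have completely $0$-simple principal factors, an idempotent $e$ and an element $s$ with $es \,\J\, e$ automatically satisfy $e\,\R\,es$ (and dually on the left); this is itself just the Rees-multiplication computation applied to the trivial identity $e\cdot(es) = es$.
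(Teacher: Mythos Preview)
Your argument is correct. The sandwiching $J_{\ol{e}}\ge J_{\ol{ue}}\ge J_{\ol{w}}$ (and its dual) together with $\ol{e}\,\J\,\ol{w}$ places all four elements in the common regular $\J$-class $J$; the principal factor $\overline{J}$ is completely $0$-simple because $\cE$ (hence $E(\ig{\cE})$) is finite; and in the Rees matrix form the non-zero equations $\ol{e}\cdot\ol{ev}=\ol{ev}$, $\ol{ue}\cdot\ol{e}=\ol{ue}$ force equality of the first, respectively last, coordinates, yielding $\ol{e}\,\R\,\ol{ev}$ and $\ol{ue}\,\L\,\ol{e}$ in $\overline{J}$ and hence in $\ig{\cE}$. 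Theorem~\ref{thm:DGR}(i) then gives regularity of $\ol{w}$.

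The paper does not actually prove this lemma on the page: it merely records the statement and points to \cite[Remark~2.6]{YDG2} for the justification. Your stability-via-principal-factor argument is precisely the natural self-contained route (the ``idempotent stability'' observation you isolate at the end is exactly the mechanism), and is in all likelihood what underlies the cited remark as well. So there is no genuine divergence in approach; you have simply supplied the details that the paper outsources.
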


A letter $e$ with the properties from the previous lemma (i.e.\ from Theorem \ref{thm:DGR}(i)) will be called the \emph{seed} of $w$;
so, succinctly, words representing regular elements are precisely those having seeds. We draw the attention to the fact that the seed
letter (if any) need not to be unique; in fact, the results of \cite{FG} show that it might happen that every letter of a word is a seed.

\subsection{$\ig{\cE}$, part two: the word problem and related concepts}\label{subsec:ige2}

We begin by recalling the key notion of a minimal r-factorisation from \cite{YDG2} and the related ideas and results. For a word $w\in E^+$
we say that the factorisation
$$
w = w_1\dots w_m
$$
is an \emph{r-factorisation} if $\ol{w_i}$ is a regular element of $\ig{\cE}$ for all $1\leq i\leq m$. Since there are only finitely many 
(r-)factorisations of a word, we can spot coarsest such factorisations in the following sense: whenever the element $\ol{w_i\dots w_j}$ is 
regular in $\ig{\cE}$ for some $i\leq j$ then necessarily $i=j$. Such r-factorisations are called \emph{minimal}.

To navigate more easily within a word (over $E$) we define the \emph{position} of a letter. Namely, if $w=uev\in E^+$
where $e\in E$, we say that the position of the indicated occurrence of the letter $e$ is $|ue|=|u|+1$.
Similarly, given a factorisation $w=w_1\dots w_m$ we can naturally associate an increasing sequence of 
integers $(\alpha_1,\dots,\alpha_m)$ by recording the positions of the first (from the left) letters of
subwords $w_1,\dots,w_m$, respectively. Here we have $1=\alpha_1<\alpha_2<\dots<\alpha_m\leq |w|$.

The following is the main result of \cite{YDG2} regarding the minimal r-factorisations of words representing
the same element in $\ig{\cE}$.

\begin{thm}\label{thm:Dfing}
Let $u,v\in E^+$ be such that $\ol{u}=\ol{v}$, and let $u=p_1\dots p_m$ and $v=q_1\dots q_s$ be arbitrary
minimal r-factorisations. Then $m=s$ and for all $1\leq i\leq m$ we have $\ol{p_i}\;\D\;\ol{q_i}$. Furthermore, 
we have $\ol{p_1}\;\R\;\ol{q_1}$ and $\ol{p_m}\;\L\;\ol{q_m}$.
\end{thm}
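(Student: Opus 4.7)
The plan is to argue by induction on the number of elementary rewriting steps via the defining relations of $\ig{\cE}$ needed to transform $u$ into $v$; since $\ol{u}=\ol{v}$, such a sequence exists. This reduces the theorem to two sub-problems: (A) the same-word case, showing that any two minimal r-factorisations of a single fixed word are $\sim$-equivalent in the precise sense of the theorem (equal length, $\D$-related components, with $\R$ at first and $\L$ at last position); and (B) the one-step case, that if $v$ is obtained from $u$ by a single basic-pair application, then every minimal r-factorisation of $u$ gives rise to a $\sim$-equivalent minimal r-factorisation of $v$. Chaining (B) along a rewriting sequence and then invoking (A) once to compare with an arbitrary minimal r-factorisation of $v$ yields the full statement.

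For (B), write $u = xefy$ and $v = xhy$ with $\{e,f\}$ a basic pair and $h = e\cdot f \in \{e,f\}$, and let $u = p_1 \dots p_m$ be the given minimal r-factorisation. If both $e$ and $f$ lie inside a single factor $p_j$, then replacing $p_j$ by the modified factor (with $ef$ substituted by $h$) gives a minimal r-factorisation of $v$ of the same length, in which the modified factor represents the same element of $\ig{\cE}$ as $p_j$, so all $\D$-classes and endpoint relations are preserved trivially. The boundary case, where $e$ is the last letter of $p_j$ and $f$ the first of $p_{j+1}$, is more intricate. Assuming $h = e$ (so $\ol{e}\ol{f} = \ol{e}$), my proposal is to delete $f$ from the front of $p_{j+1}$, taking as new $(j{+}1)$-st factor the word $\beta$ obtained from $p_{j+1}$ by removing the leading $f$. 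Using the seed characterisation of Theorem~\ref{thm:DGR} and the suffix stability in the remark after Lemma~\ref{lem:conv}, when $p_{j+1}$ admits a seed $s$ strictly inside $\beta$, one gets $\ol{\beta}\;\L\;\ol{sv_0}\;\D\;\ol{p_{j+1}}$ for a suitable $v_0$; hence $\ol{\beta}$ is regular and $\D$-equivalent to $\ol{p_{j+1}}$. Moreover, $\ol{p_j\beta} = \ol{p_j p_{j+1}}$ (using $\ol{e}\ol{f} = \ol{e}$) remains non-regular by minimality of the original, so the new factorisation inherits minimality at the $j$/$j{+}1$ boundary. The symmetric sub-case $h = f$ is dual.

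For (A), I would again exploit the seed structure: each $\ol{p_i}$ is $\D$-equivalent to any seed letter it contains, so comparing seed $\D$-classes position-by-position across two minimal r-factorisations of $u$ and ruling out mismatches by the minimality constraint should force the $\D$-classes to coincide, while the endpoint relations $\ol{p_1}\;\R\;\ol{q_1}$ and $\ol{p_m}\;\L\;\ol{q_m}$ reduce to direct applications of the prefix/suffix stability. The main obstacle will be the boundary sub-case of (B) when the only seed of $p_{j+1}$ is the leading $f$ itself (so that $\ol{\beta}$ need not be regular), its symmetric counterpart for $h = f$, and the endpoint cases $j = 1$ or $j+1 = m$ in which the theorem's $\R$-/$\L$-relations must be preserved through the modification; there the minimality of the candidate factorisation at the $j{+}1$/$j{+}2$ boundary is also nontrivial, since non-regularity of $\ol{p_{j+1}p_{j+2}} = \ol{f}\cdot\ol{\beta p_{j+2}}$ does not automatically propagate to $\ol{\beta p_{j+2}}$. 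Handling these cases will likely require a genuinely different construction—relocating the split point to an interior position of the contracted block $p_j p_{j+1}$ and re-seeding accordingly—and this is where the technical heart of the argument should lie.
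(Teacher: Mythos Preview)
First, a meta-point: this theorem is not proved in the present paper. It is stated in the preliminaries as ``the main result of \cite{YDG2} regarding the minimal r-factorisations'', and the paper simply quotes it. So there is no in-paper proof to compare against; your proposal must be measured against the argument in \cite{YDG2}. Your overall architecture---induction on the length of a rewriting sequence, splitting into (A) the same-word case and (B) the one-step case, with the boundary sub-case of (B) as the technical core---is indeed the shape of the argument there, and your diagnosis of where the difficulties lie (seed coinciding with the boundary letter, preservation of minimality at the adjacent boundary, endpoint cases) is accurate.

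There is, however, one genuine gap you have not flagged. You write ``$h = e\cdot f \in \{e,f\}$'' and then exhaust the boundary analysis by the two sub-cases $h=e$ and $h=f$. But the basic-pair condition is $\{ef,fe\}\cap\{e,f\}\neq\varnothing$, so it can happen that only $fe\in\{e,f\}$ while $h=ef$ is a \emph{third} idempotent distinct from both $e$ and $f$. Your case split is therefore not exhaustive, and the ``delete the boundary letter'' manoeuvre is not even well-posed in that situation (after contraction the boundary letter is $h$, which belonged to neither factor). The repair is not dramatic---one checks that $fe=e$ forces $h\;\R\;e$ in $\cE$, while $fe=f$ forces $h\;\L\;f$, and it is these Green's relations (rather than literal equality with $e$ or $f$) that drive the argument---but as written your (B) does not cover all one-step rewrites.

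On your acknowledged obstacles: you are right that simply stripping the boundary letter can fail both because the truncated factor may lose regularity and because minimality at the next boundary need not be inherited. The resolution in \cite{YDG2} is essentially what you anticipate in your final sentence: one does not always keep the same split points but instead merges the affected block and re-splits at an interior seed, using Lemma~\ref{lem:conv} and the prefix/suffix stability remarks to control the $\R$/$\L$-classes of the new pieces. So your proposal is on the right track, but the boundary case needs both the missing $h\notin\{e,f\}$ branch and the re-splitting construction spelled out before it becomes a proof.
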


This result gives rise to the notion of the \emph{$\D$-fingerprint} as an invariant of an element of $\ig{\cE}$:
namely, the previous result states that given such an element, any word $w\in E^+$ representing that element,
and any minimal r-factorisation $w=w_1\dots w_m$, the sequence of regular $\D$-classes 
$$
(D_{\ol{w_1}},\dots,D_{\ol{w_m}})
$$
of $\ig{\cE}$ is fixed, in the sense that it only depends only on the element of $\ig{\cE}$ we have started with.

From this point on, all the way through the remainder of this article, all considered biordered sets will be 
assumed to be finite (unless explicitly state otherwise). Therefore, if $(D_1,\dots,D_m)$ is the $\D$-fingerprint 
of $\mathbf{x}\in\ig{\cE}$ then each of the regular $\D$-classes $D_k$ coincides with its corresponding $\J$-class.
This opens the possibility to use the Rees matrix form of the corresponding principal factors, so that if 
$\mathbf{x}=\mathbf{r}_1\dots\mathbf{r}_m$ (a decomposition arising from a minimal r-factorisation of a word
representing $\mathbf{x}$) with $\mathbf{r}_k\in D_k$ for all $1\leq k\leq m$, a convenient way of writing
the element $\mathbf{r}_k$ is in the form of a triple $(i_k,g_k,\lambda_k)$; here $i_k\in I_k$ and $\lambda_k
\in\Lambda_k$ are coming from the corresponding index sets of $D_k$ and $g_k$ is an element of the (unique up
to isomorphism) maximal subgroup of $D_k$. This means that a typical element of $\ig{\cE}$ with the given 
$\D$-fingerprint would be written in the form 
$$
\mathbf{x} = (i_1,g_1,\lambda_1)\dots(i_m,g_m,\lambda_m).
$$ 
The process of rewriting an element of $\ig{\cE}$ into this form was discussed in detail in \cite[Section 4]{YDG2},
culminating in Theorem 4.3 of that paper which provides an explicit rewriting formula. This theorem, along with 
Proposition 4.1 of \cite{YDG2}, shows that there is an algorithm which, given a word from $E^+$, computes the
corresponding element of $\ig{\cE}$ in the above form of a product of Rees matrix triples. Such a product
representation is in general not unique, so the word problem for $\ig{\cE}$ effectively boils down to the
issue of establishing which products of these triples (of a given $\D$-fingerprint) are equal.

To this end, in \cite[Section 5]{YDG2} a combinatorial structure is introduced, called the \emph{contact automaton}
of two regular $\D$-classes $D_1,D_2$ of $\ig{\cE}$. Namely, it already transpired from \cite[Proposition 4.1]{YDG2}
that for each $e\in E$, the idempotent $\ol{e}$ induces two partial (possibly empty) transformations 
$\sigma_e^{(1)},\sigma_e^{(2)}$ of $I_1,I_2$, respectively, acting from the left on these index sets, as well as two
partial transformations $\tau_e^{(1)},\tau_e^{(2)}$ acting on $\Lambda_1,\Lambda_2$, respectively, from the right.
These partial maps, effectively computable from $\cE$, were instrumental in determining the rewriting of a regular
element of $\ig{\cE}$ into a ``Rees matrix triple form''. Now, the states (vertices) of the contact automaton 
$\mathcal{A}(D_1,D_2)$ are pairs $\Lambda_1\times I_2$ and we have a transition (edge) between pairs $(\lambda,i)$
and $(\mu,j)$ labelled by $e$ if and only if either 
\begin{itemize}
	\item $\lambda = \mu\tau_e^{(1)}$ and $\sigma_e^{(2)}i = j$, or
	\item $\lambda\tau_e^{(1)}=\mu$ and $i = \sigma_e^{(2)}j$.
\end{itemize}
The transitions are two-way (meaning that the edges can be traversed in both directions) but we still provide them
with orientation because each of them are also labelled by an element of the group $G_1\times G_2^\partial$ (where
$G_1,G_2$ are the maximal subgroups of $D_1,D_2$, as computed in \cite{GR1}, and $G^\partial$ denotes the \emph{dual
group} of $G$ with operation $\ast$ defined by $g\ast h=hg$); so, in one direction, the label of the transition
$\mathbf{t}=((\lambda,i),e,(\mu\,j))$ is $\ell(\mathbf{t})=(g_1,g_2)$ for certain elements $g_1\in G_1$ and $g_2\in G_2$
that are easily determined from $\lambda,\mu,i,j$ and the previous partial maps associated with $e$, while in the
opposite direction the label is $\ell(\mathbf{t}^{-1})=\ell(\mathbf{t})^{-1}=(g_1^{-1},g_2^{-1})$.

Arbitrary walks (called somewhat erroneously `paths' in \cite{YDG2}) also have their labels obtained by multiplying
the labels of edges (within the group $G_1\times G_2^\partial$) as we traverse them within the walk (respecting the
orientation). Now let $\rho(\lambda,i;\mu,j)$ denote the set of labels of all walks from $(\lambda,i)$ to $(\mu,j)$
in $\mathcal{A}(D_1,D_2)$. In \cite{YDG2} it is was argued that $\rho(\lambda,i;\mu,j)$ is always a \emph{rational subset}
of the group $G_1\times G_2^\partial$ (recall that a rational subset of a group/monoid is a subset obtained in finitely 
many steps from the finite subsets by means of union, product, and submonoid generation, see \cite{BS,Lo,LS}). Thus we can
now restate the main result of \cite{YDG2} (Theorem 5.2) which characterises the word problem for $\ig{\cE}$ in purely 
group-theoretical terms.

\begin{thm}\label{thm:ig-wp}
Let
$$
\mathbf{u} = (i_1,a_1,\lambda_1)(i_2,a_2,\lambda_2)\dots(i_m,a_m,\lambda_m)
$$
and 
$$
\mathbf{v} = (j_1,b_1,\mu_1)(j_2,b_2,\mu_2)\dots(j_m,b_m,\mu_m)
$$
be two elements of $\ig{\cE}$, where the above decompositions arise from minimal r-factorisations of two words $u,v\in E^+$
such that $\ol{u}=\mathbf{u}$ and $\ol{v}=\mathbf{v}$, both with $\D$-fingerprint $(D_1,D_2,\dots,D_m)$. Then $\mathbf{u}=
\mathbf{v}$ holds in $\ig{\cE}$ if and only if $i_1=j_1$, $\lambda_m=\mu_m$, and there exist $x_r\in G_r$, $2\leq r\leq m-1$,
such that 
\begin{align*}
      (a_1^{-1}b_1, x_2) &\in \rho_1(\lambda_1,i_2;\mu_1,j_2),\\
      (a_2^{-1}x_2^{-1}b_2,x_3) &\in \rho_2(\lambda_2,i_3;\mu_2,j_3),\\
			&\ \vdots\\
			(a_{m-2}^{-1}x_{m-2}^{-1}b_{m-2},x_{m-1}) &\in \rho_{m-2}(\lambda_{m-2},i_{m-1};\mu_{m-2},j_{m-1}),\\
      (a_{m-1}^{-1}x_{m-1}^{-1}b_{m-1},b_ma_m^{-1}) &\in \rho_{m-1}(\lambda_{m-1},i_m;\mu_{m-1},j_m),
\end{align*}
where for all $1\leq s\leq m-1$, $\rho_s(\lambda_s,i_{s+1};\mu_s,j_{s+1})$ is the rational subset of $G_s\times G_{s+1}^\partial$
consisting of labels of all walks from $(\lambda_s,i_{s+1})$ to $(\mu_s,j_{s+1})$ in $\mathcal{A}(D_s,D_{s+1})$.
\end{thm}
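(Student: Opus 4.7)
The plan is to handle the two directions of the biconditional separately, using Theorem~\ref{thm:Dfing} to pin down the boundary indices and the combinatorial structure of the contact automata to capture the remaining group-theoretic content.

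For the forward direction, assume $\mathbf{u} = \mathbf{v}$ in $\ig{\cE}$. Theorem~\ref{thm:Dfing} applied to the two minimal r-factorisations yields $\ol{p_1}\;\R\;\ol{q_1}$ in $D_1$ and $\ol{p_m}\;\L\;\ol{q_m}$ in $D_m$; translated into Rees matrix coordinates, this forces $i_1 = j_1$ and $\lambda_m = \mu_m$. The substantive content --- the existence of the $x_r$ witnessing membership in the rational subsets $\rho_s$ --- I would extract from an analysis of how the elementary defining rewritings $\ol{e}\ol{f} = \ol{e\cdot f}$ act on Rees matrix triple decompositions.

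The core of the argument is the observation that such a rewriting performed at the boundary between the $s$-th and $(s{+}1)$-st factors is exactly encoded by a transition of $\mathcal{A}(D_s, D_{s+1})$: it modifies the boundary state $(\lambda_s, i_{s+1})$ according to one of the two transition rules of the automaton, and simultaneously twists the adjacent group components $a_s \in G_s$ and $a_{s+1} \in G_{s+1}$ in a manner recorded by the edge label in $G_s \times G_{s+1}^\partial$. The dual group on the right reflects that the twist multiplies $a_{s+1}$ on the left while $a_s$ is twisted on the right. This boundary-local correspondence, essentially the content of Proposition~4.1 and Theorem~4.3 of \cite{YDG2}, lifts any derivation from $\mathbf{u}$ to $\mathbf{v}$ in $\ig{\cE}$ to walks $W_s$ from $(\lambda_s, i_{s+1})$ to $(\mu_s, j_{s+1})$ in $\mathcal{A}(D_s, D_{s+1})$ for $s = 1, \ldots, m-1$. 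The auxiliary variables $x_r$ then track the accumulated right-twist applied to $a_r$: the right-twist at boundary $r-1$ and the left-twist at boundary $r$ must together convert $a_r$ into $b_r$, yielding precisely the equations in the statement. The first and last equations absorb the boundary conditions $i_1 = j_1$ (no left correction on factor $1$) and $\lambda_m = \mu_m$ (no right correction on factor $m$).

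For sufficiency, I would start from walks realising the prescribed membership assertions and translate each back to an explicit sequence of applications of the defining relations of $\ig{\cE}$ performed at the $s$-th boundary of a word representing $\mathbf{u}$; that such a translation exists is built into the definition of $\mathcal{A}(D_s, D_{s+1})$. Performing these local rewritings across all boundaries yields a derivation of $\mathbf{v}$ from $\mathbf{u}$, since the group-theoretic constraints ensure that the net twist applied to each factor $r$ is exactly $a_r^{-1} b_r$. The main obstacle in both directions is this local-to-global translation: one must show that every elementary rewriting can be localised to a single boundary (after passing to an appropriate minimal r-factorisation representative), and conversely that the group twists prescribed in each contact automaton can be simultaneously implemented by an actual derivation in $\ig{\cE}$. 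This localisation rests on the propagation properties of seeds (Lemma~\ref{lem:conv} and the remark preceding it) and is where the bulk of the technical effort of \cite{YDG2} is concentrated.
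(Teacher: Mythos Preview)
The paper does not prove this theorem at all: it is explicitly introduced as a restatement of Theorem~5.2 of \cite{YDG2}, placed in the preliminaries section, and is simply quoted without proof. So there is no ``paper's own proof'' to compare against.

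That said, your sketch is a faithful outline of the argument as it is carried out in \cite{YDG2}: the forward direction uses Theorem~\ref{thm:Dfing} for the boundary indices, and the heart of both directions is the local correspondence between elementary rewritings at a factor boundary and transitions in the relevant contact automaton, as encoded in \cite[Proposition~4.1 and Theorem~4.3]{YDG2}. You have correctly identified where the technical weight lies (the localisation of rewritings to boundaries via seed propagation) and correctly flagged that this is imported wholesale from \cite{YDG2} rather than reproved here.
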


Our immediate goal in the following section is to learn more about these rational subsets $\rho_s$; as it turns out, they actually 
have a faily simple structure. This will allow us to rephrase the theorem just stated in an even more group-theoretical fashion and
hence, to extract decidability results for the word problem of $\ig{\cE}$ significantly more general than Theorem 6.1 in \cite{YDG2}.


\section{Gain graphs and the word problem for $\ig{\cE}$}\label{sec:gain}

Looking at the definition of the contact automaton of two regular $\D$-classes of $\ig{\cE}$, it is not difficult to recognise a particular
instance of a more familiar structure arising in combinatorics (specifically, algebraic graph theory). Namely, let $G$ be a group. We define
a \emph{gain graph} \cite{Gro,MRS,Za} (sometimes also called \emph{voltage graph} or \emph{group labelled graph} \cite[Section 4.13]{RSBook}, 
see also \cite{Ron}) $\Gamma=(V_\Gamma,E_\Gamma,\phi)$ where $(V_\Gamma,E_\Gamma)$ is an undirected graph (with loops and multiple edges allowed) 
and $\phi$ is a labelling (called the \emph{gain function}) of edges $E_\Gamma$ by elements of the group $G$ in a `oriented fashion', so that 
we may conceptualise it as a partial function $\phi:V_\Gamma\times E_\Gamma \times V_\Gamma\to G$ whose domain consists precisely of those triples 
$(u,e,v)$ such that the edge $e$ connects vertices $u$ and $v$. In this sense, we require the condition 
$$
(v,e,u)\phi = [(u,e,v)\phi]^{-1}.
$$
(This will require loops, if any, to be labelled by involutions, elements of $g\in G$ satisfying $g^2=1$, but for most applications loops are
labelled by the identity element.) One of the most important examples of gain graphs in semigroup theory are the so-called \emph{Graham-Houghton
graphs} of a $\D$-class, see, for example, \cite{EG}.

For a walk
$$
\mathbf{w} = u_0e_1u_1\dots u_{n-1}e_nu_n
$$
we define its \emph{gain} $\mathbf{w}\phi$ by
$$
\mathbf{w}\phi = (u_0,e_1,u_1)\phi \dots (u_{n-1},e_n,u_n)\phi.
$$
It is immediate to see that if $\mathbf{w}^{-1}$ denotes the reverse walk of $\mathbf{w}$ then its gain satisfies $\mathbf{w}^{-1}\phi = 
[\mathbf{w}\phi]^{-1}$.

\begin{rmk}\label{rmk:contact}
The contact automaton $\mathcal{A}(D_1,D_2)$ defined in the previous section is essentially a gain graph over the group $G_1\times G_2^\partial$;
to see this, it suffices just to throw away the labels of its transitions from the alphabet $E$ (these were useful in establishing that the subsets 
$\rho_s$ appearing in Theorem \ref{thm:ig-wp} are rational).
\end{rmk}

A \emph{cycle} (based at the vertex $u_0$) is a closed walk without repeated vertices, apart from the first/last vertex $u_0$; similarly, a \emph{path}
is a walk without repeated vertices. We define a \emph{conjugated cycle} based at $u_0$ to be a closed walk of the form $\mathbf{p}\mathbf{c}\mathbf{p}^{-1}$
where $\mathbf{p}$ is a path from $u_0$ to some vertex $v$ and $\mathbf{c}$ is a cycle based at $v$ such that $v$ is the only common vertex of $\mathbf{p}$
and $\mathbf{c}$. (We consider cycles as special cases of conjugated cycles where $\mathbf{p}$ is the empty path.)

Let $W_u\subseteq G$ denote the set of gains of all closed walks in $\Gamma$ based at $u$. The following observation is immediate, thus its proof is omitted.
In fact, we remark that all of the following statements concluding with Theorem \ref{thm:gain-gen} below are basically folklore in gain graph theory, but we 
include their elementary proofs for the sake of completeness.

\begin{lem}\label{lem:Wu}
For any gain graph $\Gamma$ and $u\in V_\Gamma$, $W_u$ is a subgroup of $G$.
\end{lem}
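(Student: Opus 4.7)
The plan is to verify the three subgroup axioms (non-emptiness, closure under multiplication, closure under inversion) directly from the definitions of gain and of closed walk. Since the identification of a closed walk based at $u$ with its gain is entirely combinatorial, each axiom reduces to producing the appropriate closed walk.

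First I would establish that $1_G \in W_u$ by exhibiting the trivial (empty) walk at $u$, whose gain is the empty product $1_G$. Alternatively, one can use any edge $e$ incident to $u$ (if one exists) and traverse it back and forth, which has gain $(u,e,v)\phi \cdot (v,e,u)\phi = (u,e,v)\phi \cdot [(u,e,v)\phi]^{-1} = 1_G$ by the defining condition of $\phi$; but the empty-walk argument handles isolated vertices as well.

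For closure under products, suppose $g_1, g_2 \in W_u$ are realised by closed walks $\mathbf{w}_1, \mathbf{w}_2$ based at $u$, i.e. $\mathbf{w}_1\phi = g_1$ and $\mathbf{w}_2\phi = g_2$. The concatenation $\mathbf{w}_1\mathbf{w}_2$ (which makes sense because both walks begin and end at $u$) is again a closed walk based at $u$, and by the definition of gain as the product along the walk, its gain is $(\mathbf{w}_1\mathbf{w}_2)\phi = (\mathbf{w}_1\phi)(\mathbf{w}_2\phi) = g_1 g_2$, so $g_1 g_2 \in W_u$. For closure under inverses, given $g = \mathbf{w}\phi \in W_u$ with $\mathbf{w}$ closed at $u$, the reverse walk $\mathbf{w}^{-1}$ is also closed at $u$, and the observation immediately preceding the lemma gives $\mathbf{w}^{-1}\phi = (\mathbf{w}\phi)^{-1} = g^{-1}$, whence $g^{-1} \in W_u$.

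There is no real obstacle here: the statement is essentially the observation that the gain map is a monoid homomorphism from the walk monoid at $u$ (with concatenation) to $G$, extended by the antihomomorphic behaviour of gain under walk reversal. The only things one has to be slightly careful about are the conventions for the empty walk and for loops (loops do not interfere because the defining condition $(v,e,u)\phi = [(u,e,v)\phi]^{-1}$ applies uniformly), but these are handled by the definitions the author has already fixed.
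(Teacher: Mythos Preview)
Your argument is correct and is exactly the routine verification one expects: the empty walk gives the identity, concatenation of closed walks gives closure under products, and reversal gives closure under inverses via the relation $\mathbf{w}^{-1}\phi=(\mathbf{w}\phi)^{-1}$ already recorded in the paper. The paper itself omits the proof entirely, calling the observation immediate, so your write-up is precisely the sort of proof the author had in mind but chose not to spell out.
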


We are going to call $W_u$ the \emph{vertex group} of $u$ in $\Gamma$.

\begin{lem}
If the vertices $u,v$ belong to the same connected component of the gain graph $\Gamma$ then the subgroups $W_u$ and $W_v$ are conjugate.
\end{lem}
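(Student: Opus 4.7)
The plan is to mimic the standard change-of-basepoint argument for fundamental groups, replacing loops with closed walks and using the gain function in place of homotopy classes. First I would pick, using the assumption that $u$ and $v$ lie in the same connected component, a walk $\mathbf{p}$ from $u$ to $v$ and set $g=\mathbf{p}\phi\in G$. The candidate conjugating element is this $g$.

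Next I would define two maps $\alpha:W_v\to W_u$ and $\beta:W_u\to W_v$ by sending a closed walk $\mathbf{c}$ based at $v$ to $\mathbf{p}\mathbf{c}\mathbf{p}^{-1}$ (a closed walk based at $u$) and a closed walk $\mathbf{c}'$ based at $u$ to $\mathbf{p}^{-1}\mathbf{c}'\mathbf{p}$ (a closed walk based at $v$), respectively. The key computation, which is immediate from the definition of the gain of a walk as the product of the gains along its edges together with the identity $\mathbf{w}^{-1}\phi=[\mathbf{w}\phi]^{-1}$ already recorded, gives
$$
(\mathbf{p}\mathbf{c}\mathbf{p}^{-1})\phi = g\,(\mathbf{c}\phi)\,g^{-1}
\quad\text{and}\quad
(\mathbf{p}^{-1}\mathbf{c}'\mathbf{p})\phi = g^{-1}\,(\mathbf{c}'\phi)\,g.
$$
Hence $\alpha$ and $\beta$ induce, at the level of gains, the conjugation maps $h\mapsto ghg^{-1}$ and $h'\mapsto g^{-1}h'g$, respectively.

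Finally I would observe that these two conjugation maps are mutually inverse bijections of $G$, so the images $gW_vg^{-1}\subseteq W_u$ and $g^{-1}W_ug\subseteq W_v$ in fact force the equality $W_u=gW_vg^{-1}$, proving that $W_u$ and $W_v$ are conjugate in $G$. There is no real obstacle here: the only point worth a moment's care is making sure that the concatenations $\mathbf{p}\mathbf{c}\mathbf{p}^{-1}$ and $\mathbf{p}^{-1}\mathbf{c}'\mathbf{p}$ are legitimately walks (so that their gains are well defined via the factorisation formula), but this is automatic since the endpoints match by construction.
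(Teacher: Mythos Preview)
Your proposal is correct and follows essentially the same approach as the paper: pick a walk $\mathbf{p}$ from $u$ to $v$ with gain $g$, conjugate closed walks at $v$ by $\mathbf{p}$ to obtain $gW_vg^{-1}\subseteq W_u$, then reverse the roles (using $\mathbf{p}^{-1}$) to get the other inclusion. The only cosmetic point is that you initially describe $\alpha,\beta$ as maps $W_v\to W_u$ and $W_u\to W_v$ while defining them on walks rather than on gains, but you immediately pass to the level of gains, so the argument is sound and matches the paper's.
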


\begin{proof}
Let $\mathbf{w}$ be an arbitrary walk from $u$ to $v$, and for brevity write $g=\mathbf{w}\phi$. We claim that $W_u = g W_v g^{-1}$.
Indeed, let $\mathbf{z}$ be any closed walk based at $v$. Then $\mathbf{w}\mathbf{z}\mathbf{w}^{-1}$ is a closed walk based at $u$, so
$g(\mathbf{z}\phi)g^{-1} = (\mathbf{w}\mathbf{z}\mathbf{w}^{-1})\phi \in W_u$. This shows that $g W_v g^{-1}\subseteq W_u$. However, by
switching the roles of vertices $u$ and $v$ in the previous argument (considering an arbitrary closed walk $\mathbf{x}$ based at $u$, 
and noting that $\mathbf{w}^{-1}$ is a walk from $v$ to $u$) we obtain $g^{-1} W_u g\subseteq W_v$, which is equivalent to $W_u\subseteq
g W_v g^{-1}$.
\end{proof}

Since the walk $\mathbf{w}$ in the previous proof was arbitrary, we may conclude the following.

\begin{cor}
We have $gW_vg^{-1} = hW_vh^{-1}$ for gains $g,h\in G$ of any two walks in $\Gamma$ from $u$ to $v$.
\end{cor}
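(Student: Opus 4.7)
The plan is to read off the corollary directly from the proof of the preceding lemma, rather than from its statement. That proof establishes, for an arbitrary walk $\mathbf{w}$ from $u$ to $v$ with gain $g = \mathbf{w}\phi$, the equality $W_u = gW_vg^{-1}$. The crucial point is that the left-hand side $W_u$ is an intrinsic invariant of the vertex $u$ in $\Gamma$, with no reference to any particular walk. Consequently, the right-hand side $gW_vg^{-1}$ must be independent of the choice of $\mathbf{w}$ as well, and applying this observation to two walks from $u$ to $v$ with gains $g$ and $h$ gives $gW_vg^{-1} = W_u = hW_vh^{-1}$.

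If one prefers an argument that does not piggyback on the proof of the previous lemma, here is a self-contained route. Let $\mathbf{w}_1$ and $\mathbf{w}_2$ be walks from $u$ to $v$ with gains $g$ and $h$ respectively. The concatenation $\mathbf{w}_2^{-1}\mathbf{w}_1$ is a closed walk based at $v$ whose gain is $h^{-1}g$, so $h^{-1}g \in W_v$. By Lemma \ref{lem:Wu}, $W_v$ is a subgroup of $G$, and hence it is closed under conjugation by any of its own elements; this gives $(h^{-1}g)W_v(h^{-1}g)^{-1} = W_v$, i.e.\ $W_v = h^{-1}gW_vg^{-1}h$. Conjugating by $h$ yields $hW_vh^{-1} = gW_vg^{-1}$, as required.

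I do not anticipate any real obstacle: the statement is a bookkeeping remark that makes the choice-independence hidden inside the proof of the preceding lemma explicit, and either of the two lines of argument above settles the matter in a handful of symbols.
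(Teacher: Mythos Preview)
Your proposal is correct and mirrors the paper exactly: the paper likewise derives the corollary by noting that the walk in the preceding lemma was arbitrary (so $W_u=gW_vg^{-1}=hW_vh^{-1}$), and then offers as an alternative the observation that $g^{-1}h$ is the gain of a closed walk based at $v$, hence lies in $W_v$. Your two arguments are precisely these two, with only cosmetic differences in which closed walk ($\mathbf{w}_1^{-1}\mathbf{w}_2$ versus $\mathbf{w}_2^{-1}\mathbf{w}_1$) is invoked.
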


An alternative way of justifying the above statement is to notice that $g^{-1}h$ is a gain of a closed walk based at $v$, so $g^{-1}h\in W_v$.

\begin{lem}\label{lem:coset}
Let $u,v$ be two vertices belonging to the same connected component of the gain graph $\Gamma$. If $W(u,v)$ denotes the set of gains of all 
walks from $u$ to $v$ in $\Gamma$ and $g$ is the gain of an arbitrary fixed walk $\mathbf{w}_0$ from $u$ to $v$, then
$$
W(u,v) = W_u g.
$$
\end{lem}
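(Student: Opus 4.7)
The plan is to prove the equality $W(u,v) = W_u g$ by two inclusions, both directly from the definition of gains of walks and the concatenation properties already implicit in the preceding material. The key algebraic facts I will invoke are that the gain of a concatenation $\mathbf{w}_1 \mathbf{w}_2$ of walks is $(\mathbf{w}_1\phi)(\mathbf{w}_2\phi)$ (immediate from the definition of $\mathbf{w}\phi$ as a product of triple-labels), and that the gain of the reverse walk satisfies $\mathbf{w}^{-1}\phi = [\mathbf{w}\phi]^{-1}$, which was recorded right after the definition of gain.

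For the inclusion $W_u g \subseteq W(u,v)$, I would take an arbitrary $h \in W_u$, realised as $h = \mathbf{x}\phi$ for some closed walk $\mathbf{x}$ based at $u$. Then the concatenation $\mathbf{x}\mathbf{w}_0$ is a walk from $u$ to $v$ whose gain is $(\mathbf{x}\phi)(\mathbf{w}_0\phi) = hg$, so $hg \in W(u,v)$.

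For the reverse inclusion $W(u,v) \subseteq W_u g$, I would take any walk $\mathbf{w}$ from $u$ to $v$ and set $h = \mathbf{w}\phi$. Then $\mathbf{w}\mathbf{w}_0^{-1}$ is a closed walk based at $u$, with gain $(\mathbf{w}\phi)(\mathbf{w}_0\phi)^{-1} = h g^{-1}$, so $h g^{-1} \in W_u$; equivalently, $h \in W_u g$.

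There is no real obstacle here: the statement is essentially the standard fact that the set of labels of walks between two vertices of a connected gain graph forms a (right) coset of the vertex group. The only thing to take a little care with is consistency of the ``left vs right'' convention in the definition of $W_u$ versus the direction in which $\mathbf{w}_0$ is traversed, but since gains are defined as products in the natural left-to-right order along the walk, the computation above matches the stated equality $W(u,v) = W_u g$ (rather than $g W_u$) without any modification.
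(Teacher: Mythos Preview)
Your proof is correct and essentially identical to the paper's own argument: both inclusions are obtained by concatenating with $\mathbf{w}_0$ or $\mathbf{w}_0^{-1}$ to pass between walks $u\to v$ and closed walks based at $u$, using the multiplicativity of gains and $\mathbf{w}^{-1}\phi=[\mathbf{w}\phi]^{-1}$. The only cosmetic difference is that the paper treats the inclusion $W(u,v)\subseteq W_u g$ first.
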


\begin{proof}
Let $\mathbf{w}$ be an arbitrary walk form $u$ to $v$. Then $\mathbf{w}\mathbf{w}_0^{-1}$ is a closed walk based at $u$, so 
$(\mathbf{w}\mathbf{w}_0^{-1})\phi = (\mathbf{w}\phi)g^{-1} \in W_u$, implying $\mathbf{w}\phi \in W_u g$ and so $W(u,v) \subseteq 
W_u g$. Conversely, any element of the coset $W_u g$ is of the form $(\mathbf{w}\phi) g$ for some closed walk $\mathbf{w}$ based at $u$.
However, this is precisely the gain of the walk $\mathbf{w}\mathbf{w}_0$ which begins at $u$ and ends at $v$. Hence, $W_u g\subseteq
W(u,v)$.
\end{proof}

It is at this point that we can say more about the rational subsets $\rho(\lambda,i;\mu,j)$, introduced in the previous section, that 
comprise labels of all walks in the contact automaton $\mathcal{A}(D_1,D_2)$ from $(\lambda,i)$ to $(\mu,j)$, making a crucial appearance 
in Theorem \ref{thm:ig-wp}: either $\rho(\lambda,i;\mu,j) = \es$ or, otherwise, if $(g_1,g_2)\in G_1\times G_2^\partial$ is the label of 
any fixed walk (say, shortest path, for the sake of an example and computational simplicity) from $(\lambda,i)$ to $(\mu,j)$, then
\begin{equation}\label{eq:rho}
\rho(\lambda,i;\mu,j) = W_{(\lambda,i)}(g_1,g_2).
\end{equation}
However, to see that the above equality is indeed a realisation of $\rho(\lambda,i;\mu,j)$ as a rational subset of $G_1\times G_2^\partial$,
the gain group of $\mathcal{A}(D_1,D_2)$, and to make this formula an explicit and computable (from $\cE$) rational expression representing
$\rho(\lambda,i;\mu,j)$ we must learn more about the vertex group $W_{(\lambda,i)}$; in fact, we would like it to be finitely generated,
with its finite generating set effectively computable from $\mathcal{A}(D_1,D_2)$ (and thus from $\cE$). This is exactly what is supplied
by the following result.

\begin{thm}\label{thm:gain-gen}
Let $\Gamma$ be a gain graph over the group $G$. Then for any $u\in V_\Gamma$, the vertex group $W_u$ is generated by gains of all 
conjugated cycles based at $u$. 

Consequently, if the graph $\Gamma$ is finite then $W_u$ is finitely generated, and there exists an algorithm which, given $u\in V_\Gamma$, 
computes a finite generating set of $W_u$ (represented e.g.\ as words in terms of the generators of $G$).
\end{thm}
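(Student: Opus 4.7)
The plan is to carry out the standard spanning-tree construction familiar from the computation of the fundamental group of a graph, adapted to the group-labelled setting. First I would pass to the connected component of $u$ in $\Gamma$ (everything outside is irrelevant for closed walks based at $u$) and fix a spanning tree $T$ of that component. For every vertex $v$ of the component let $P_v$ denote the unique reduced walk in $T$ from $u$ to $v$; in particular $P_u$ is the empty walk. For every edge $e\in E_\Gamma$ not belonging to $T$, with endpoints $v_1, v_2$, define the closed walk $C_e = P_{v_1}\cdot e\cdot P_{v_2}^{-1}$ based at $u$. The claim is that the (finitely many, when $\Gamma$ is finite) gains $C_e\phi$ generate $W_u$.

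To prove generation, take an arbitrary closed walk $\mathbf{w} = u_0 e_1 u_1\dots e_n u_n$ with $u_0 = u_n = u$ and rewrite it by inserting the trivial factor $P_{u_i}^{-1}P_{u_i}$ between consecutive edges $e_i$ and $e_{i+1}$. Because the gain function is involutive in the sense that $(v,e,u)\phi=[(u,e,v)\phi]^{-1}$, each such insertion leaves $\mathbf{w}\phi$ unchanged, and after regrouping one has
\[
\mathbf{w}\phi \;=\; \prod_{i=1}^{n}\bigl(P_{u_{i-1}}\cdot e_i\cdot P_{u_i}^{-1}\bigr)\phi.
\]
If $e_i$ is a tree edge, then the corresponding factor is a closed walk contained entirely in $T$, and since $T$ is acyclic it reduces by successive removal of edge/reverse-edge pairs to the empty walk, so its gain is the identity; if $e_i$ is a non-tree edge then the factor equals $C_{e_i}$ or $C_{e_i}^{-1}$ depending on the direction of traversal. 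Hence $\mathbf{w}\phi$ is a product of $C_e\phi$'s and their inverses.

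Finally, I would reconcile each $C_e$ with the paper's notion of a conjugated cycle. For a non-tree edge $e$ with endpoints $v_1, v_2$, root $T$ at $u$ and let $w$ be the lowest common ancestor of $v_1$ and $v_2$. Then $P_{v_1} = P_w\cdot Q_1$ and $P_{v_2} = P_w\cdot Q_2$, where $Q_1, Q_2$ are tree paths from $w$ to $v_1, v_2$ sharing only the vertex $w$, so
\[
C_e \;=\; P_w\cdot(Q_1\cdot e\cdot Q_2^{-1})\cdot P_w^{-1};
\]
the inner parenthesised expression is a cycle based at $w$ whose sole intersection with the path $P_w$ is $w$ itself, matching the definition of a conjugated cycle based at $u$ exactly. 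For the effective statement, when $\Gamma$ is finite a spanning tree is computable (e.g.\ by depth-first search), the non-tree edges are finite in number, and each $C_e\phi$ is obtained by multiplying the finitely many edge labels traversed, yielding the desired finite generating set as explicit words in the generators of $G$. The only genuine subtlety is choosing a rewriting scheme that simultaneously accounts for every closed walk and produces terms in the specific conjugated-cycle shape mandated by the definition; the spanning-tree plus LCA trick is tailored precisely for this, with everything else reducing to the bookkeeping observation that tree-only closed walks have trivial gain, which is nothing more than the involutivity of edge labels under reversal.
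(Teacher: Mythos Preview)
Your proof is correct but follows a genuinely different route from the paper's. The paper argues by induction on the length of a closed walk $\mathbf{w}$ based at $u$: one locates the first repeated vertex $u_j=u_i$ (with $i<j$), observes that the prefix $\mathbf{p}=u_0e_1\dots e_iu_i$ is a path and $\mathbf{c}=u_ie_{i+1}\dots e_ju_j$ is a cycle meeting $\mathbf{p}$ only at $u_i$, so $\mathbf{p}\mathbf{c}\mathbf{p}^{-1}$ is a conjugated cycle; one then peels this off and applies the inductive hypothesis to the strictly shorter closed walk $\mathbf{p}\,e_{j+1}u_{j+1}\dots e_nu_0$. No auxiliary structure is built, and the generating set produced is literally the set of \emph{all} conjugated cycles based at $u$.

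Your spanning-tree argument is the standard fundamental-group computation transported to the gain setting, and it yields strictly more: you exhibit a specific, typically much smaller, generating set indexed by the non-tree edges (so of size $|E_\Gamma|-|V_\Gamma|+1$ on the connected component of $u$), each of which you then verify, via the lowest-common-ancestor decomposition, is a conjugated cycle in the paper's sense. This sharper conclusion is advantageous for the algorithmic clause, since the output is automatically minimal in a reasonable sense. The paper's inductive peeling, by contrast, is more self-contained---no tree, no LCA bookkeeping---and makes the ``conjugated cycle'' shape appear directly at each step rather than requiring a separate verification at the end. Both approaches are entirely standard; yours is closer to the textbook computation of $\pi_1$ of a graph, while the paper's is an ad hoc unwinding that keeps the argument local to the walk at hand.
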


\begin{proof}
For the first part of the theorem, we need to show that for any closed walk 
$$
\mathbf{w}=u_0e_1u_1\dots u_{n-1}e_nu_0
$$ 
based at $u=u_0$, $\mathbf{w}\phi\in W_u$ can be written as a product of gains of conjugated cycles based at $u$. We prove this by induction on 
the length of $\mathbf{w}$.

First, if there are no repeated vertices in $\mathbf{w}$ (apart from the initial/final vertex $u$) then $\mathbf{w}$ is itself a cycle (and thus 
a trivially conjugated cycle) based at $u$; its gain is then among the conjectured generating set of $W_u$. 

Otherwise, $\mathbf{w}$ contains repeated vertices, so choose the minimum index $j$ such that $u_j$ represents a repetition in $\mathbf{w}$, 
i.e.\ such that $u_j=u_i$ for some $i<j$. Then $\mathbf{p}=u_0e_1u_1\dots e_iu_i$ is a path in $\Gamma$ (from $u$ to $u_i$) and 
$\mathbf{c}=u_ie_{i+1}u_{i+1}\dots e_ju_j$ is a cycle based at $u_i$, whence $\mathbf{p}$ and $\mathbf{c}$ have no vertices in common except
$u_i$. Hence, $\mathbf{p}\mathbf{c}\mathbf{p}^{-1}$ is a conjugated cycle based at $u$, whereas $\mathbf{p}\mathbf{c}$ is a prefix of $\mathbf{w}$.
Therefore,
\begin{align*}
\mathbf{w}\phi &= (\mathbf{p}\mathbf{c}e_{j+1}u_{j+1}\dots e_nu_0)\phi\\
               &= (\mathbf{p}\phi)(\mathbf{c}\phi)((u_je_{j+1}u_{j+1}\dots e_nu_0)\phi) \\
							 &= (\mathbf{p}\phi)(\mathbf{c}\phi)(\mathbf{p}\phi)^{-1}(\mathbf{p}\phi)((u_je_{j+1}u_{j+1}\dots e_nu_0)\phi) \\
							 &= (\mathbf{p}\mathbf{c}\mathbf{p}^{-1})\phi(\mathbf{p}e_{j+1}u_{j+1}\dots e_nu_0)\phi.
\end{align*}
Now $\mathbf{p}e_{j+1}u_{j+1}\dots e_nu_0 = u_0e_1u_1\dots e_iu_ie_{j+1}u_{j+1}\dots e_nu_0$ is a closed walk based at $u$ whose length is
\emph{strictly shorter} than $\mathbf{w}$; by the induction hypothesis, its gain can be written as a product of gains of some conjugated
cycles based at $u$. We now obtain that the same holds for $\mathbf{w}\phi$, and the induction is thus completed.

For the second part of the statement, note that a finite graph can have only finitely many conjugated cycles based at any of its vertices
(because there are only finitely many paths starting from a given vertex, as well as only finitely many cycles based at a given vertex).
Thus $W_u$ is finitely generated in this case; furthermore, standard search algorithms on graphs suffice to effectively discover all the 
conjugated cycles based at $u$. Upon recording their gains, we recover a generating set of $W_u$ in an algorithmic fashion.
\end{proof}

\begin{rmk}
Now we know that for each vertex $(\lambda,i)$ in the contact automaton $\mathcal{A}(D_1,D_2)$ the vertex group $W_{(\lambda,i)}$ is
finitely generated, and its generating set can be effectively computed from $\mathcal{A}(D_1,D_2)$ (and thus from $\cE$). Also, we
can determine a coset representative $(g_1,g_2)$ as above by identifying a single walk/path from $(\lambda,i)$ to $(\mu,j)$ (if any,
otherwise $\rho(\lambda,i;\mu,j)=\es$) and then reading off its label. This provides a simple effective way to compute the rational
subsets $\rho_s$ appearing in Theorem \ref{thm:ig-wp}.
\end{rmk}

Inspired by the characterisation of the word problem of $\ig{\cE}$ provided by this theorem, let $\mathbf{u}=(i_1,a_1,\lambda_1)
\dots (i_m,a_m,\lambda_m)$ and $\mathbf{v}=(j_1,b_1,\mu_1)\dots(j_m,b_m,\mu_m)$ be two elements with $\D$-fingerprint 
$(D_1,\dots,D_m)$. We define a function $(\cdot,\mathbf{u},\mathbf{v})\theta$ mapping subsets $A\subseteq G_1$ to subsets $G_m$ in 
the manner as described in the following. We construct a sequence $A_k\subseteq G_k$, $1\leq k\leq m$, starting from $A_1=A$. First 
of all, let 
$$
B_{1,2} = (A\times G_2)\cap W_{(\lambda_1,i_2)}g_{1,2}
$$
where $g_{1,2}$ is the label of some fixed walk from $(\lambda_1,i_2)$ to $(\mu_1,j_2)$ in $\mathcal{A}(D_1,D_2)$ (if there is such a walk
at all, otherwise we have just $B_{1,2}=\es$). Then we set $A_2=B_{1,2}\pi_2$ where $\pi_2$ is the second projection map. Assuming that 
$A_k$ has been constructed for some $1<k<m$, let
$$
B_{k,k+1} = (a_k^{-1}A_k^{-1}b_k \times G_{k+1}) \cap W_{(\lambda_k,i_{k+1})}g_{k,k+1}
$$
where $g_{k,k+1}$ is the label of some fixed walk from $(\lambda_k,i_{k+1})$ to $(\mu_k,j_{k+1})$ in the automaton/graph 
$\mathcal{A}(D_k,D_{k+1})$ (again, if any, otherwise $B_{k,k+1}=\es$). Similarly as above, we define $A_{k+1}=B_{k,k+1}\pi_2$. 
It is apparent from this definition that if $A_k=\es$ for some $k$ then $A_p=\es$ for all $p\geq k$. Finally, the value of 
$(A,\mathbf{u},\mathbf{v})\theta$ is the set $A_m\subseteq G_m$ obtained at the end of this iterative process.

Our main interest in the remainder of this paper will be in the case when the set $A$ is a right coset of some finitely generated 
subgroup $H\leq G_1$. To state our main result related to the values and computation of sets of the form 
$(Hx,\mathbf{u},\mathbf{v})\theta$, we need to recall several properties of groups related to finite generation of their subgroups,
and also some algorithmic aspects of these properties. We refer to \cite{DVZ} and the literature cited therein for a more extensive
overview of related concepts.

First, let us recall that a group $G$ has the \emph{Howson property} \cite{Howson} if the intersection of any two finitely subgroups
of $G$ is also finitely generated. For example, free groups, and more generally virtually free groups have this property. We relax
this property by relativising it to a fixed finitely generated subgroup $H\leq G$ and a family $\mathcal{K}$ of finitely generated 
subgroups of $G$: we say that $G$ enjoys the \emph{$(H,\mathcal{K})$-relative Howson property} if for any finitely generated $K\leq G$ 
such that $K\in\mathcal{K}$ we have that $H\cap K$ is finitely generated. An effective version of this relative Howson property is 
denoted $\mathsf{eRHP}(G,H,\mathcal{K})$: it asserts that $G$ has the $(H,\mathcal{K})$-relative Howson property, and furthermore,
that there exists an algorithm which, presented with a finite generating set of $K\leq G$, $K\in\mathcal{K}$, computes a finite 
generating set of $H\cap K$.

A similar relativisation can be done to the well-known \emph{coset intersection problem} $\mathsf{CIP}(G)$ in the group $G$: this problem
requires, given two finitely generated subgroups $H,K\leq G$ (whose generating sets are represented by finite sets of words over the 
generating set of $G$) and $x,y\in G$ (again, represented as words over the generators of $G$), to decide if $Hx\cap Ky=\es$, and in 
the case of a negative answer to compute a right coset representative (as it is easy to show that if $Hx\cap Ky\neq\es$ then $Hx\cap Ky$ 
must be a right coset of $H\cap K$). Now we can fix one of the finitely generated subgroups (say, $H$) and restrict the range of the other 
one ($K$) to a class of finitely generated subgroups $\mathcal{K}$ of $G$, and ask the same question, the \emph{$(H,\mathcal{K})$-relative 
coset intersection problem} $\mathsf{RCIP}(G,H,\mathcal{K})$, where the input consists of a finitely generated $K\leq G$ such that 
$K\in\mathcal{K}$ and $x,y\in G$.

\begin{thm}\label{thm:theta}
Let $\cE$ be a finite biordered set, and let $\mathbf{u},\mathbf{v}$ be two elements of $\ig{\cE}$ of $\D$-fingerprint $(D_1,\dots,D_m)$. 
Let $L\leq G_1$ and $x\in G_1$. Furthermore, let $\mathcal{T}_k$ be the class of \emph{tall} subgroups of $G_k\times G_{k+1}^\partial$, 
$1\leq k<m$, namely subgroups of the form $H\times G_{k+1}$ for a finitely generated $H\leq G_k$.
\begin{itemize}
\item[(1)] $(Lx,\mathbf{u},\mathbf{v})\theta$ is either empty, or a left coset of a subgroup $M$ of $G_m$.
\item[(2)] If for all $1\leq k<m$ the group $G_k\times G_{k+1}^\partial$ has the $(W_{(\lambda_k,i_{k+1})},\mathcal{T}_k)$-relative Howson 
           property, and $L$ is finitely generated, then either $(Lx,\mathbf{u},\mathbf{v})\theta=\es$, or $M$ (as defined in (1)) is 
					 finitely generated as well.
\item[(3)] If for all $1\leq k<m$ the property $\mathsf{eRHP}(G_k\times G_{k+1}^\partial, W_{(\lambda_k,i_{k+1})},\mathcal{T}_k)$ holds and 
           there is an algorithm solving $\mathsf{RCIP}(G_k\times G_{k+1}^\partial, W_{(\lambda_k,i_{k+1})},\mathcal{T}_k)$, then 
					 $(Lx,\mathbf{u},\mathbf{v})\theta$ is effectively computable, i.e., there is an algorithm which, given a finite generating set of 
					 $L$ and $x\in G_1$, decides if $(Lx,\mathbf{u},\mathbf{v})\theta=\es$, and in the case of a negative answer outputs a finite 
					 generating set for $M$ and $y\in G_m$ such that $(Lx,\mathbf{u},\mathbf{v})\theta=yM$.
\end{itemize}
\end{thm}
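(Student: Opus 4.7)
The plan is to prove all three parts simultaneously by induction on the index $k$, tracking the structure of the intermediate sets $A_k \subseteq G_k$ appearing in the iterative definition of $(Lx,\mathbf{u},\mathbf{v})\theta$. The inductive claim for $2 \le k \le m$ will be that $A_k$ is either empty or a left coset $y_k H_k$ of a subgroup $H_k \le G_k$, with $H_k$ finitely generated under the hypotheses of (2), and with both $y_k$ and a finite generating set for $H_k$ algorithmically computable under the hypotheses of (3). Reading off the case $k = m$ then yields (1)--(3). (The degenerate case $m = 1$ is immediate, since $Lx = x(x^{-1}Lx)$ is a left coset of $x^{-1}Lx$.)

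For the first iteration, $A_1 = Lx$ is a right coset of $L$ in $G_1$. The calculation $(l,g)(x,e_2) = (lx, e_2 \ast g) = (lx, g)$ in $G_1 \times G_2^\partial$ shows that $Lx \times G_2 = (L \times G_2)(x,e_2)$ is a right coset of the tall subgroup $L \times G_2 \in \mathcal{T}_1$. Intersecting with the right coset $W_{(\lambda_1,i_2)} g_{1,2}$ produces, by the standard fact that the intersection of two right cosets of subgroups is empty or a right coset of the intersection, either $\es$ or $N_1(y_1',y_2)$, where $N_1 = (L \times G_2) \cap W_{(\lambda_1,i_2)}$. Since $(n_1,n_2)(y_1',y_2) = (n_1 y_1', y_2 n_2)$ in $G_1 \times G_2^\partial$, projecting via $\pi_2$ yields $A_2 = y_2(N_1 \pi_2)$, a left coset in $G_2$. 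The inductive step follows the same recipe: assuming $A_k = y_k H_k$, the identities $(y_k H_k)^{-1} = H_k y_k^{-1}$ and $a_k^{-1} H_k y_k^{-1} b_k = (a_k^{-1} H_k a_k)(a_k^{-1} y_k^{-1} b_k)$ express $a_k^{-1} A_k^{-1} b_k$ as a right coset of the conjugate $H_k' = a_k^{-1} H_k a_k$; crossing with $G_{k+1}$ gives a right coset of the tall subgroup $H_k' \times G_{k+1} \in \mathcal{T}_k$; intersecting with $W_{(\lambda_k,i_{k+1})} g_{k,k+1}$ and projecting via $\pi_2$ produces either $\es$ or the left coset $A_{k+1} = y_{k+1} H_{k+1}$, where $H_{k+1} = ((H_k' \times G_{k+1}) \cap W_{(\lambda_k,i_{k+1})})\pi_2 \le G_{k+1}$.

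For (2), finite generation is preserved at every step: conjugation by $a_k$ and the projection $\pi_2$ are homomorphisms, the direct product $H_k' \times G_{k+1}$ is finitely generated since every maximal subgroup of $\ig{\cE}$ for finite $\cE$ is finitely presented (Section \ref{subsec:ige1}), and the intersection with $W_{(\lambda_k,i_{k+1})}$ is finitely generated by the relative Howson assumption on tall subgroups. For (3), each step is rendered algorithmic by the corresponding hypothesis: Theorem \ref{thm:gain-gen} supplies generators of $W_{(\lambda_k,i_{k+1})}$; a shortest-walk search in the finite contact automaton $\mathcal{A}(D_k,D_{k+1})$ either certifies that $\rho_k = \es$ or returns a representative $g_{k,k+1}$; the assumed solution to $\mathsf{RCIP}(G_k \times G_{k+1}^\partial, W_{(\lambda_k,i_{k+1})},\mathcal{T}_k)$ decides emptiness of $B_{k,k+1}$ and outputs a coset representative; and $\mathsf{eRHP}(G_k \times G_{k+1}^\partial, W_{(\lambda_k,i_{k+1})},\mathcal{T}_k)$ returns a finite generating set for the intersection subgroup, from which the conjugates and the $\pi_2$-images can be computed explicitly.

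The main obstacle is purely bookkeeping: correctly tracking the twist introduced by the dual multiplication in the second coordinate of $G_k \times G_{k+1}^\partial$, and the alternation between right cosets (produced by the Cartesian product and intersection steps) and left cosets (produced by the projection step) across each iteration, so that the inversion and pre/post-multiplication by $a_k, b_k$ at the start of each iteration translate one into the other in a controlled way.
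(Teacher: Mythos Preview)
Your proposal is correct and follows essentially the same approach as the paper: both track the iterative construction of the sets $A_k$, show inductively that each is a left coset of a subgroup of $G_k$ (by rewriting $a_k^{-1}A_k^{-1}b_k$ as a right coset of a conjugate, crossing with $G_{k+1}$ to get a tall-subgroup coset, intersecting with the vertex-group coset, and projecting), and then read off parts (2) and (3) by noting that the relative Howson and coset-intersection hypotheses make each step finiteness-preserving and effective. Your explicit handling of the dual-group twist and the right/left coset alternation is a welcome bit of bookkeeping that the paper leaves slightly more implicit.
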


\begin{proof}
Throughout the proof, we let $\mathbf{u}=(i_1,a_1,\lambda_1)\dots (i_m,a_m,\lambda_m)$ and $\mathbf{v}=(j_1,b_1,\mu_1)\dots(j_m,b_m,\mu_m)$,
with both decompositions arising from minimal r-fac\-to\-ri\-sa\-tions of words representing $\mathbf{u},\mathbf{v}$, respectively.

(1) Let $A_1=Lx,A_2,\dots,A_{m-1},A_m=(\mathbf{u},\mathbf{v},Lx)\theta$ be the sequence of sets constructed in the course of computing
$(\mathbf{u},\mathbf{v},Lx)\theta$. Then, first of all,
$$
B_{1,2} = (Lx \times G_2) \cap W_{(\lambda_1,i_2)}g_{1,2} = (L\times G_2^\partial)(x,1)\cap W_{(\lambda_1,i_2)}(g_{1,2}^{(1)},g_{1,2}^{(2)}),
$$
where $g_{1,2}= (g_{1,2}^{(1)},g_{1,2}^{(2)})$ is the label of some walk $(\lambda_1,i_2)\leadsto(\mu_1,j_2)$ in $\mathcal{A}(D_1,D_2)$
(provided such a walk exists, otherwise $B_{1,2}=\es$).
Now we see that $B_{1,2}$ is the intersection of two right cosets of subgroups of $G_1\times G_2^\partial$ (namely, $L\times G_2^\partial$ 
and $W_{(\lambda_1,i_2)}$), and therefore either $B_{1,2}=\es$ (whence $A_2=\es$ and in fact $A_k=\es$ for all $k\geq 2$), or $B_{1,2}$ is 
a right coset of $(L\times G_2^\partial) \cap W_{(\lambda_1,i_2)}$. However, then $A_2=B_{1,2}\pi_2$ is a right coset of 
$M_2=((L\times G_2^\partial) \cap W_{(\lambda_1,i_2)})\pi_2$, a subgroup of $G_2^\partial$, the dual group of $G_2$. In $G_2$, however, $A_2$ 
is a left coset (of this same subgroup).

We now claim that each of the subsets $A_2,\dots,A_m$ is a left coset of a subgroup of $G_2,\dots,G_m$, respectively. For the purpose of
an inductive argument, assume that $A_k=x_k M_k$ for some $k\geq 2$, some subgroup $M_k$ of $G_k$ and $x_k\in G_k$. Then $A_k^{-1} = M_kx_k^{-1}$
and so, by taking $g_{k,k+1} = (g_{k,k+1}^{(1)},g_{k,k+1}^{(2)})$ to be the label of some walk $(\lambda_k,i_{k+1})\leadsto(\mu_k,j_{k+1})$ in 
$\mathcal{A}(D_k,D_{k+1})$ (if any, otherwise set $B_{k,k+1}=\es$), we have
\begin{align*}
B_{k,k+1} &= (a_k^{-1}A_k^{-1}b_k \times G_{k+1}) \cap W_{(\lambda_k,i_{k+1})}g_{k,k+1} \\
          &= (a_k^{-1}M_kx_k^{-1}b_k \times G_{k+1}) \cap W_{(\lambda_k,i_{k+1})}(g_{k,k+1}^{(1)},g_{k,k+1}^{(2)}) \\
					&= ((a_k^{-1}M_ka_k)(a_k^{-1}x_k^{-1}b_k) \times G_{k+1}) \cap W_{(\lambda_k,i_{k+1})}(g_{k,k+1}^{(1)},g_{k,k+1}^{(2)}) \\
					&= ((a_k^{-1}M_ka_k) \times G_{k+1}^\partial)(a_k^{-1}x_k^{-1}b_k,1) \cap W_{(\lambda_k,i_{k+1})}(g_{k,k+1}^{(1)},g_{k,k+1}^{(2)}),
\end{align*}
so, just as previously, $B_{k,k+1}$ is either empty (implying that $A_{k+1}\neq\es$) or a right coset of $((a_k^{-1}M_ka_k) \times G_{k+1}^\partial) 
\cap W_{(\lambda_k,i_{k+1})}$. In the latter case, $A_{k+1}$ is a right coset of $M_{k+1} = [((a_k^{-1}M_ka_k) \times G_{k+1}^\partial) 
\cap W_{(\lambda_k,i_{k+1})}]\pi_2$ viewed as a subgroup of $G_{k+1}^\partial$, which means that it is a left coset of $M_{k+1}$ within $G_{k+1}$.
Hence, we inductively conclude that $(\mathbf{u},\mathbf{v},Lx)\theta = A_m$ is a left coset of a subgroup of $G_m$.

(2) This follows by a straightforward inspection of the argument (1) above, as we can now inductively show that all of the subgroups $M_1=L,M_2,\dots,M_m$
of $G_1,\dots,G_m$, respectively, are finitely generated. Indeed, for any $k\geq 2$, if $A_k\neq\es$ then $M_k$ arises a second projection of the intersection 
of a tall subgroup of $G_{k-1}\times G_k^\partial$ (namely, $L\times G_2^\partial$ if $k=2$ and $(a_{k-1}^{-1}M_{k-1}a_{k-1}) \times G_k^\partial$ if $k>2$) and 
$W_{(\lambda_{k-1},i_k)}$. Now the assumption that $M_{k-1}$ is finitely generated and the $(W_{(\lambda_{k-1},i_k)},\mathcal{T}_{k-1})$-relative Howson property 
imply that $M_k$ is finitely generated, too. Therefore, if $(Lx,\mathbf{u},\mathbf{v})\theta\neq\es$ then $M_m$ is a finitely generated subgroup of $G_m$.

(3) Again, a careful inspection of the argument (1) suffices to show that the given conditions ensure that the whole procedure as described in (1) is effectively
computable. Namely, assume that the set $A_k$, $k\geq 1$, has already been computed (via a finite generating set $X_k$ for $M_k$ and coset representative $x_k\in G_k$).
Then, if $F_k$ denotes the finite generating set of $G_k$ (supplied by \cite[Theorem 3.10]{DGR}), then $X_1\times F_2$ is a generating set for $L\times G_2^\partial$
and if $k\geq 2$, $(a_k^{-1}X_ka_k) \times F_{k+1}$ is a generating set for $(a_{k-1}^{-1}M_{k-1}a_{k-1}) \times G_k^\partial$. By $\mathsf{RCIP}(G_k\times 
G_{k+1}^\partial, W_{(\lambda_k,i_{k+1})},\mathcal{T}_k)$, there is an algorithm which decides whether 
$$
((a_k^{-1}M_ka_k) \times G_{k+1}^\partial)(a_k^{-1}x_k^{-1}b_k,1) \cap W_{(\lambda_k,i_{k+1})}(g_{k,k+1}^{(1)},g_{k,k+1}^{(2)})=\es.
$$ 
If the answer is ``yes'' then $A_{k+1}=\es$ and thus our algorithm returns $(\mathbf{u},\mathbf{v},Lx)\theta = \es$. Otherwise, by the property 
$\mathsf{eRHP}(G_k\times G_{k+1}^\partial, W_{(\lambda_k,i_{k+1})},\mathcal{T}_k)$ there is an algorithm computing a finite generating set for the intersection 
of subgroups of $G_k\times G_{k+1}^\partial$ whose second projection is $M_{k+1}$, and thus, by taking the second coordinates from these generating elements, 
one effectively computes a finite generating set for $M_{k+1}$. Employing $\mathsf{RCIP}(G_k\times  G_{k+1}^\partial, W_{(\lambda_k,i_{k+1})},\mathcal{T}_k)$ 
once again, we have an algorithm that computes a coset representative for $B_{k,k+1}$, and the second coordinate for this latter pair is the coset representative 
of $A_{k+1}$. By iterating this process, we eventually compute $(\mathbf{u},\mathbf{v},Lx)\theta$.
\end{proof}

We are finally ready to state the announced rephrasing of the word problem of $\ig{\cE}$ in terms of the machinery just introduced.

\begin{thm}\label{thm:ig-wp-re}
With all the notation as in the previous theorem, $\mathbf{u}=\mathbf{v}$ holds in $\ig{\cE}$ for $\mathbf{u}=(i_1,a_1,\lambda_1)\dots 
(i_m,a_m,\lambda_m)$ and $\mathbf{v}=(j_1,b_1,\mu_1)\dots(j_m,b_m,\mu_m)$ if and only if we have $i_1=j_1$, $\lambda_m=\mu_m$ and 
$$
b_ma_m^{-1} \in (\{a_1^{-1}b_1\},\mathbf{u},\mathbf{v})\theta.
$$
\end{thm}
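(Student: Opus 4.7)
The plan is to show that the proposed reformulation is essentially a direct bookkeeping translation of Theorem \ref{thm:ig-wp} into the language of the iteratively defined function $\theta$. The key identification is equation \eqref{eq:rho}: whenever it is non-empty, the rational subset $\rho_k(\lambda_k,i_{k+1};\mu_k,j_{k+1})$ equals exactly the coset $W_{(\lambda_k,i_{k+1})}g_{k,k+1}$ that is used in the definition of $B_{k,k+1}$. Thus the objects appearing in Theorem \ref{thm:ig-wp} and in the definition of $\theta$ are the same, and what remains is to match up the existential quantifiers in Theorem \ref{thm:ig-wp} with the projections that appear in the iterative construction.

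Concretely, I would prove by induction on $k\in\{2,\dots,m\}$ that the set $A_k$ arising in the computation of $(\{a_1^{-1}b_1\},\mathbf{u},\mathbf{v})\theta$ is equal to
$$
\bigl\{x_k\in G_k:\ \exists\,x_2,\dots,x_{k-1}\in G_{2,\dots,k-1}\text{ satisfying the first } k-1\text{ conditions of Theorem \ref{thm:ig-wp}}\bigr\}.
$$
The base case $k=2$ follows straight from the definition of $B_{1,2}=(\{a_1^{-1}b_1\}\times G_2)\cap W_{(\lambda_1,i_2)}g_{1,2}$, since an element $(a_1^{-1}b_1,x_2)$ lies in this intersection precisely when the first condition $(a_1^{-1}b_1,x_2)\in\rho_1(\lambda_1,i_2;\mu_1,j_2)$ holds. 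For the inductive step, the pair $(a_k^{-1}x_k^{-1}b_k,x_{k+1})$ lies in $B_{k,k+1}$ iff $x_k\in A_k$ (furnishing the first coordinate through the set $a_k^{-1}A_k^{-1}b_k$) and $(a_k^{-1}x_k^{-1}b_k,x_{k+1})\in\rho_k$, so taking second projections yields the desired characterization of $A_{k+1}$. The case in which some $A_k$ becomes empty corresponds transparently to the non-existence of any satisfying assignment in Theorem \ref{thm:ig-wp}.

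Finally, specialising the last iteration, $A_m$ is the set of $x_m\in G_m$ for which there exist $x_2,\dots,x_{m-1}$ satisfying
$(a_1^{-1}b_1,x_2)\in\rho_1,\ \dots,\ (a_{m-1}^{-1}x_{m-1}^{-1}b_{m-1},x_m)\in\rho_{m-1}$. Substituting $x_m=b_ma_m^{-1}$ recovers exactly the final line in the displayed system of Theorem \ref{thm:ig-wp}. Hence $b_ma_m^{-1}\in(\{a_1^{-1}b_1\},\mathbf{u},\mathbf{v})\theta$ if and only if such $x_2,\dots,x_{m-1}$ exist, which, together with the (identical in both formulations) conditions $i_1=j_1$ and $\lambda_m=\mu_m$, matches Theorem \ref{thm:ig-wp} precisely. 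The only real obstacle is index-shift discipline; once the induction is set up with the correct alignment between the $k$th condition of Theorem \ref{thm:ig-wp} and the $k$th iterate of $\theta$, the argument reduces to routine unwinding.
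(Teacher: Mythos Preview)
Your proposal is correct and follows essentially the same approach as the paper. The only organisational difference is that you package both directions into a single inductive set equality for $A_k$, whereas the paper proves the forward direction by an induction showing any solution $x_r$ lies in $A_r$, and the converse by a backwards construction picking $x_{m-1},x_{m-2},\dots$ from the successive $A_r$'s; the underlying content is identical.
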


\begin{proof}
($\Rightarrow$)
Assume that we have $\mathbf{u}=\mathbf{v}$. Then, by Theorem \ref{thm:ig-wp} (while also bearing in mind the remarks following Lemma \ref{lem:coset}),
there exist $x_r\in G_r$, $1<r<m$, such that 
\begin{align*}
      (a_1^{-1}b_1, x_2) &\in W_{(\lambda_1,i_2)}g_{1,2},\\
      (a_2^{-1}x_2^{-1}b_2,x_3) &\in W_{(\lambda_2,i_3)}g_{2,3},\\
			&\ \vdots\\
			(a_{m-2}^{-1}x_{m-2}^{-1}b_{m-2},x_{m-1}) &\in W_{(\lambda_{m-2},i_{m-1})}g_{m-2,m-1},\\
      (a_{m-1}^{-1}x_{m-1}^{-1}b_{m-1},b_ma_m^{-1}) &\in W_{(\lambda_{m-1},i_m)}g_{m-1,m},
\end{align*}
where for each $1\leq k<m$, $g_{k,k+1}\in G_k\times G_{k+1}^\partial$ is a label of some walk $(\lambda_k,i_{k+1})\leadsto(\mu_k,j_{k+1})$ in 
$\mathcal{A}(D_k,D_{k+1})$. By the very definition of the process yielding the set $(\{a_1^{-1}b_1\},\mathbf{u},\mathbf{v})\theta$ (during
which we have computed $A_1=\{a_1^{-1}b_1\},A_2,\dots,A_{m-1}$ and $A_m=(\{a_1^{-1}b_1\},\mathbf{u},\mathbf{v})\theta$), we obtain that any 
solution of the above system of constraints satisfies $x_r\in A_r$ for all $2\leq r\leq m-1$. Indeed, this is clear for $r=2$; upon assuming 
that $x_k\in A_k$ for some $k\geq 2$, from the condition 
$$
(a_k^{-1}x_k^{-1}b_k,x_{k+1}) \in W_{(\lambda_k,i_{k+1})}g_{k,k+1}
$$
and the fact that $(a_k^{-1}x_k^{-1}b_k,x_{k+1})\in (a_k^{-1}A_k^{-1}b_k)\times G_{k+1}$ we conclude that $x_{k+1}\in B_{k,k+1}\pi_2=A_{k+1}$.
Thus the last of the given conditions implies that $b_ma_m^{-1}\in A_m=(\{a_1^{-1}b_1\},\mathbf{u},\mathbf{v})\theta$.

($\Leftarrow$) Conversely, assume that $b_ma_m^{-1} \in (\{a_1^{-1}b_1\},\mathbf{u},\mathbf{v})\theta$ holds. Let 
$$A_1=\{a_1^{-1}b_1\},A_2,\dots,A_{m-1},A_m$$ 
be the sets obtained during the iterative process defining $(\{a_1^{-1}b_1\},\mathbf{u},\mathbf{v})\theta$. First of all, our assumption 
implies that none of these sets is empty. Furthermore, $b_ma_m^{-1} \in A_m$ means that there must be a $y_{m-1}\in G_{m-1}$ such that 
$$
(y_{m-1},b_ma_m^{-1})\in ((a_{m-1}^{-1}A_{m-1}^{-1}b_{m-1})\times G_m^\partial)\cap W_{(\lambda_{m-1},i_m)}g_{m-1,m}
$$
for the label $g_{m-1,m}$ of some walk $(\lambda_{m-1},i_m)\leadsto(\mu_{m-1},j_m)$ in $\mathcal{A}(D_{m-1},D_m)$.
This means that $y_{m-1}=a_{m-1}^{-1}x_{m-1}^{-1}b_{m-1}$ for some $x_{m-1}\in A_{m-1}$; so let us fix one such element $x_{m-1}$. Assuming
that $x_k\in A_k$ has already been picked for some $2<k<m$, the fact that we are considering an element of $A_k$ implies the existence of a
$y_{k-1}\in G_{k-1}$ such that 
$$
(y_{k-1},x_{k-1})\in ((a_{k-1}^{-1}A_{k-1}^{-1}b_{k-1})\times G_k^\partial)\cap W_{(\lambda_{k-1},i_k)}g_{k-1,k}
$$
for the label $g_{k-1,k}$ of some walk $(\lambda_{k-1},i_k)\leadsto(\mu_{k-1},j_k)$ in $\mathcal{A}(D_{k-1},D_k)$.
Again, $y_{k-1}=a_{k-1}^{-1}x_{k-1}^{-1}b_{k-1}$ for some $x_{k-1}\in A_{k-1}$, and upon the choice of $y_{k-1}$ with the above property,
this becomes our pick of $x_{k-1}$. This way, we arrive at an element $x_2\in A_2$; however, then
$$
(a_1^{-1}b_1,x_2)\in W_{(\lambda_1,i_2)}g_{1,2}
$$
for the label $g_{1,2}$ of some walk $(\lambda_1,i_2)\leadsto(\mu_1,j_2)$ in $\mathcal{A}(D_1,D_2)$. Bearing in mind Theorem \ref{thm:ig-wp},
it is at this point that we gathered sufficient information to conclude that $\mathbf{u}=\mathbf{v}$.
\end{proof}

The combined effect of the previous two theorems is as follows; this the principal result of this section. Recall that for a group $G$, the
\emph{subgroup membership problem} for $G$ asks for the existence of an algorithm which, given $g\in G$ and a finitely generated subgroup
$H\leq G$, decides whether $g\in H$.

\begin{thm}\label{thm:ig-wp-dec}
Let $\cE$ be a finite biordered sets such that for any two regular $\D$-classes $D_1,D_2$ of $\ig{\cE}$ (with maximal subgroups $G_1,G_2$, 
respectively), each vertex group $W=W_{(\lambda,i)}$ of $\mathcal{A}(D_1,D_2)$ such that there exists a non-regular element of the form 
$(i',g_1,\lambda)(i,g_2,\lambda')\in\ig{\cE}$ for some $i'\in I_1$, $g_1\in G_1$, $g_2\in G_2$, $\lambda'\in\Lambda_2$, and the class 
$\mathcal{T}$ of all tall subgroups of $G_1\times G_2^\partial$ we have that
\begin{itemize}
\item[(i)] the property $\mathsf{eRHP}(G_1\times G_2^\partial,W,\mathcal{T})$ holds, and
\item[(ii)] there is an algorithm solving the problem $\mathsf{RCIP}(G_1\times G_2^\partial,W,\mathcal{T})$.
\end{itemize}
If, in addition, 
\begin{itemize}
\item[(iii)] the maximal subgroups of all regular $\D$-classes of $\ig{\cE}$ have decidable subgroup membership problems,
\end{itemize} 
then the word problem $\ig{\cE}$ is decidable.
\end{thm}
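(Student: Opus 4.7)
The plan is to chain together the tools built up in this section: Theorem \ref{thm:DGR}(ii) extracts minimal r-factorisations, \cite[Section 4]{YDG2} rewrites their factors as Rees matrix triples, Theorem \ref{thm:theta}(3) effectively computes the $\theta$-coset under hypotheses (i)--(ii), and Theorem \ref{thm:ig-wp-re} reduces equality in $\ig{\cE}$ to a single coset membership test, which hypothesis (iii) will then decide.

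Concretely, given input words $u,v\in E^+$, I would first iterate Theorem \ref{thm:DGR}(ii) to extract minimal r-factorisations $u=p_1\cdots p_m$ and $v=q_1\cdots q_s$; finiteness of $\cE$ together with items (1)--(4) of Subsection \ref{subsec:ige1} and \cite[Theorem 3.10]{DGR} makes the regular $\D$-classes of $\ig{\cE}$ effectively enumerable, so one can determine the $\D$-class of each $\ol{p_k}$ and $\ol{q_k}$ from the output of Theorem \ref{thm:DGR}(ii). If $m\neq s$ or some $\ol{p_k}$ and $\ol{q_k}$ lie in distinct $\D$-classes, Theorem \ref{thm:Dfing} yields $\ol{u}\neq\ol{v}$. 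Otherwise, \cite[Theorem 4.3]{YDG2} rewrites the factors as $\mathbf{u}=(i_1,a_1,\lambda_1)\cdots(i_m,a_m,\lambda_m)$ and $\mathbf{v}=(j_1,b_1,\mu_1)\cdots(j_m,b_m,\mu_m)$ sharing $\D$-fingerprint $(D_1,\dots,D_m)$. Theorem \ref{thm:ig-wp-re} immediately rules out $\ol{u}=\ol{v}$ unless $i_1=j_1$ and $\lambda_m=\mu_m$, which is a direct check. In the degenerate case $m=1$, the problem collapses to an equality test in the group $G_1$, handled by hypothesis (iii). For $m\ge 2$, minimality of the r-factorisation forces every product $(i_k,a_k,\lambda_k)(i_{k+1},a_{k+1},\lambda_{k+1})$ to be non-regular in $\ig{\cE}$, so each vertex group $W_{(\lambda_k,i_{k+1})}$ invoked by the iterative definition of $\theta$ lies precisely within the scope of hypotheses (i)--(ii). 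Applying Theorem \ref{thm:theta}(3) with $L=\{1\}$ and $x=a_1^{-1}b_1$ either returns $\es$, in which case Theorem \ref{thm:ig-wp-re} gives $\ol{u}\neq\ol{v}$, or outputs a finite generating set of a subgroup $M\le G_m$ together with a representative $y\in G_m$ such that $(\{a_1^{-1}b_1\},\mathbf{u},\mathbf{v})\theta=yM$. By Theorem \ref{thm:ig-wp-re}, $\ol{u}=\ol{v}$ is then equivalent to $y^{-1}b_ma_m^{-1}\in M$, a question decided by the subgroup membership algorithm supplied by hypothesis (iii).

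The main obstacle is not in the present statement but already resolved inside Theorem \ref{thm:theta}: one must guarantee that the entire iteration computing $\theta$ preserves the invariant that each intermediate set $A_k$ is a left coset of a finitely generated subgroup $M_k\le G_k$ whose conjugate image $(a_{k-1}^{-1}M_{k-1}a_{k-1})\times G_k^\partial$ falls in the tall class $\mathcal{T}_{k-1}$, so that the effective relative Howson property (i) and relative coset intersection algorithm (ii) are actually applicable at every step. Once that inductive structure is available, the proof of the present theorem reduces to stringing these algorithms together and finishing with the one extra membership query afforded by (iii); the sanity check that only vertex groups within the restricted scope of the hypothesis are ever touched is precisely what the $\D$-fingerprint setup (together with non-regularity of the consecutive triple products) provides.
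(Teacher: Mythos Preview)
Your proposal is correct and follows essentially the same route as the paper's proof: observe that consecutive triple products in a minimal r-factorisation are non-regular so the hypotheses apply to the relevant vertex groups, invoke Theorem~\ref{thm:theta}(3) to compute $(\{a_1^{-1}b_1\},\mathbf{u},\mathbf{v})\theta$ as either $\es$ or a coset $yM$, and then use Theorem~\ref{thm:ig-wp-re} to reduce $\mathbf{u}=\mathbf{v}$ to the membership question $y^{-1}b_ma_m^{-1}\in M$, decided by~(iii). You are more explicit than the paper about the preprocessing (extracting minimal r-factorisations, comparing $\D$-fingerprints, rewriting into Rees matrix triples, and the degenerate case $m=1$), but these are exactly the steps the paper takes for granted as already established in \cite{DGR,YDG2}.
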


\begin{proof}
First of all, note that for each element $\mathbf{u}=(i_1,a_1,\lambda_1)\dots (i_m,a_m,\lambda_m)\in\ig{\cE}$ of $\D$-fingerprint $(D_1,\dots,D_m)$
we have that $(i_k,a_k,\lambda_k)(i_{k+1},a_{k+1},\lambda_{k+1})$ is not regular, so the assumptions of the theorem are available for any vertex group
of the form $W=W_{(\lambda_k,i_{k+1})}$.

Now, our aim is tho show that there is an algorithm deciding if $\mathbf{u}=\mathbf{v}$ for any given elements $\mathbf{u}=(i_1,a_1,\lambda_1)\dots 
(i_m,a_m,\lambda_m)$ and $\mathbf{v}=(j_1,b_1,\mu_1)\dots(j_m,b_m,\mu_m)$ of $\ig{\cE}$. Clearly, one can establish whether $i_1=j_1$ and 
$\lambda_m=\mu_m$. By the conditions (i)--(ii) and by virtue of Theorem \ref{thm:theta}(3), there is an algorithm which, presented with $\mathbf{u},\mathbf{v}$, 
computes the set $(\{a_1^{-1}b_1\},\mathbf{u},\mathbf{v})\theta$, which is either the empty set or a (left) coset of a finitely generated subgroup $M$ of $G_m$,
and in the latter case computes a finite generating set of $M$ and a coset representative $y\in G_m$.

By Theorem \ref{thm:ig-wp-re}, $\mathbf{u}=\mathbf{v}$ holds if and only if $b_ma_m^{-1} \in (\{a_1^{-1}b_1\},\mathbf{u},\mathbf{v})\theta$. If 
the computed value of $(\{a_1^{-1}b_1\},\mathbf{u},\mathbf{v})\theta$ is $\es$ then our algorithm returns ``no''; otherwise, $\mathbf{u}=\mathbf{v}$
is equivalent to $b_ma_m^{-1}\in yM$, that is, to $y^{-1}b_ma_m^{-1}\in M$. Since by (iii) $G_m$ is assumed to have decidable subgroup membership
problem, we can invoke the corresponding algorithm on input $y^{-1}b_ma_m^{-1}$ and $M$ to decide the latter containment, whence our algorithm for
deciding the word problem of $\ig{\cE}$ is complete.
\end{proof}

\begin{rmk}\label{rmk:almost-finite}
Notice that the above theorem implies, as a corollary, Theorem 6.1 of \cite{YDG2}. Namely, if both $G_1,G_2$ are finite groups then both 
$\mathsf{eRHP}(G_1\times G_2^\partial,W,\mathcal{T})$ (trivially) holds (as it boils down to the computation of the intersection of two subgroups 
of a finite group), and, similarly, there is a straightforward algorithm for solving $\mathsf{RCIP}(G_1\times G_2^\partial,W,\mathcal{T})$ (again, 
because it entails the computation of the intersection of cosets within a finite group). If, however, only one of the groups $G_1,G_2$ is finite,
while the other is free, then $G_1\times G_2^\partial$ is a virtually free group which has the Howson property \cite{Gr}, and, furthermore,
it is effective \cite{Si,Si20}; also the coset intersection problem is algorithmically solvable in virtually free groups \cite{Gr}. 
Finally, if both $G_1,G_2$ are free, then both $D_1,D_2$ are maximal (in the sense of \cite[Section 6]{YDG2}), but then, as shown in 
\cite[Lemma 6.3]{YDG2}, all the vertices of $\mathcal{A}(D_1,D_2)$ appearing in the course of deciding whether an equality $\mathbf{u}=\mathbf{v}$
holds in $\ig{\cE}$ are either isolated or have only a loop around them labelled by the identity element; thus, all the relevant vertex
groups $W$ are trivial, which makes the property $\mathsf{eRHP}(G_1\times G_2^\partial,W,\mathcal{T})$ trivially true. On the other hand,
the problem $\mathsf{RCIP}(G_1\times G_2^\partial,W,\mathcal{T})$ is easily seen to be equivalent to the subgroup membership problem for 
$G_1\times G_2^\partial$ restricted to its (finitely generated) tall subgroups. This, in turn, is equivalent to the subgroup membership problem
for the free group $G_1$, which is decidable (see \cite{BS,Lo}). For the same reason, since by assumptions of \cite[Theorem 6.1]{YDG2} all maximal 
subgroups of $\ig{\cE}$ are finite or free, they all have decidable subgroup membership problems.
\end{rmk}


\section{The structure of $\ig{\cE}$: Green's relations, \Sch groups, shapes of $\D$-classes}\label{sec:str}

In this section our goal is to gain more insight into the structure of (non-regular) $\D$-classes of $\ig{\cE}$. To achieve this, we set
three intermediate objectives: the first one is to provide a clear characterisation of its Green's relations $\R,\L,\H,\D,\J$ and thus to 
describe the classes $R_\mathbf{x},L_\mathbf{x},H_\mathbf{x},D_\mathbf{x},J_\mathbf{x}$ for its typical element written as a product of 
Rees matrix triples $\mathbf{x}=(i_1,a_1,\lambda_1)\dots(i_m,a_m,\lambda_m)$ corresponding to its $\D$-fingerprint. Following this, we turn 
to the question of determining the \Sch group of the $\H$-class $H_\mathbf{x}$ (contained in a non-regular $\D$-class). As we shall see, 
there is a reasonably strong connection between these \Sch groups and the maximal subgroups (in the regular $\D$-classes) of $\ig{\cE}$; 
also, this will provide a handy enumeration of the elements of the $\H$-class $H_\mathbf{x}$. All of this is then applied towards our 
third objective: to supply the basic parameters of the $\D$-class $D_\mathbf{x}$ by which we mean the enumeration of the sets of its 
$\R$-classes and $\L$-classes.

Our pursue of these objectives is based on the following result.

\begin{pro}\label{pro:H}
Let $w\in E^+$ be a word with a minimal r-decomposition $w=p_1\dots p_m$, and let $x,y\in E^*$ be such that $\ol{xwy}=\ol{w}$. Then 
$\ol{xp_1}\;\H\;\ol{p_1}$ and $\ol{p_my}\;\H\; \ol{p_m}$.
\end{pro}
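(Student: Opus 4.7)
The plan is to exhibit a minimal r-factorisation of the word $xwy$ of the particular form
$$
xwy \;=\; (xp_1)\,p_2\,p_3\,\cdots\,p_{m-1}\,(p_m y),
$$
and then read off the statement using Theorem \ref{thm:Dfing} together with one standard Rees-matrix fact. Once such a factorisation is in hand, Theorem \ref{thm:Dfing} applied to it alongside the given $w = p_1 \cdots p_m$ (both minimal r-factorisations of the common element $\ol{xwy}=\ol{w}$) at once delivers $\ol{xp_1}\;\R\;\ol{p_1}$ and $\ol{p_m y}\;\L\;\ol{p_m}$, and places $\ol{xp_1},\ol{p_1}\in D_1$ and $\ol{p_m y},\ol{p_m}\in D_m$. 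Since $\ol{xp_1}=\ol{x}\cdot\ol{p_1}$ is trivially $\leq_L \ol{p_1}$, and inside a regular $\D$-class of $\ig{\cE}$ the quasi-order $\leq_L$ coincides with $\L$ (using that the principal factor is completely $0$-simple and that regularity of $D_1$ lets one lift any witness $e\in\ig{\cE}^1$ of $\ol{xp_1}=e\ol{p_1}$ into $D_1$ itself by composing with an idempotent in $R_{\ol{p_1}}$), this upgrades to $\ol{xp_1}\;\L\;\ol{p_1}$; combined with the $\R$-equivalence this gives $\ol{xp_1}\;\H\;\ol{p_1}$, and the symmetric argument produces $\ol{p_m y}\;\H\;\ol{p_m}$.

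The task thus reduces to verifying regularity of $\ol{xp_1}$ and $\ol{p_m y}$: once both are regular the displayed list is an r-factorisation of $xwy$ with $m$ factors (the middle $\ol{p_k}$'s being regular by hypothesis), and since Theorem \ref{thm:Dfing} forces every minimal r-factorisation of $\ol{xwy}=\ol{w}$ to have exactly $m$ factors, this one cannot be further merged and is automatically minimal. For the regularity of $\ol{xp_1}$, I would invoke the seed decomposition of $p_1$ guaranteed by Theorem \ref{thm:DGR}(i): write $p_1=u_1 f_1 v_1$ with $f_1\in E$, $\ol{u_1 f_1}\;\L\;\ol{f_1}\;\R\;\ol{f_1 v_1}$ and $\ol{f_1}\;\D\;\ol{p_1}\in D_1$. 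Reading $xp_1$ as the factorisation $(xu_1)f_1 v_1$, the right-hand $\R$-condition $\ol{f_1}\;\R\;\ol{f_1 v_1}$ is already in place, so by Lemma \ref{lem:conv} the regularity of $\ol{xp_1}$ (together with $\ol{xp_1}\;\D\;\ol{f_1}\in D_1$) reduces to the single statement
$$
\ol{xu_1 f_1}\;\L\;\ol{f_1}.
$$

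This last $\L$-equivalence is where the full force of the hypothesis $\ol{xwy}=\ol{w}$ has to enter, and it is the main obstacle of the proof. The direction $\ol{xu_1 f_1}\leq_L\ol{f_1}$ is trivial from the multiplication; the reverse is the genuine content. My intended route is to pick an arbitrary minimal r-factorisation $xwy=q_1 q_2\cdots q_m$ (possible by Theorem \ref{thm:Dfing}, which also forces $\ol{q_1}\;\R\;\ol{p_1}$ in $D_1$) and to carry out a positional analysis of the prefix $q_1$ inside $xwy = xu_1 f_1 v_1\, p_2\cdots p_m\, y$. The minimality of the given decomposition $w=p_1\cdots p_m$---which forbids any product $\ol{p_i\cdots p_j}$ with $i<j$ from being regular---should simultaneously rule out $q_1$ stopping strictly before the last letter of $p_1$ (else the remaining $m-1$ regular factors must furnish a second seed in $D_1$ inside $p_1 p_2\cdots p_m y$ whose presence alongside $f_1$ would force some $\ol{p_1 p_2\cdots p_k}$ with $k>1$ to be regular) and $q_1$ extending strictly past that letter (else a regular $\ol{q_1}\in D_1$ absorbs a nontrivial prefix of $p_2$ together with all of $p_1$, causing a regular $\ol{p_1^{(\text{suffix})} p_2^{(\text{prefix})}}$ to appear inside $D_1$, again at odds with minimality). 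This pins $q_1$ to coincide with $xp_1$ as a prefix, yielding both the regularity of $\ol{xp_1}$ and the desired $\L$-equivalence at the seed; the dual argument from the right side of $xwy$ handles $\ol{p_m y}$, completing the proof.
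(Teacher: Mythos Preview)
Your overall strategy is sound and close in spirit to the paper's: reduce to the regularity of $\ol{xp_1}$ and $\ol{p_my}$, observe that the factorisation $(xp_1)p_2\cdots p_{m-1}(p_my)$ is then an r-factorisation with $m$ factors (hence automatically minimal, since any r-factorisation can be coarsened to a minimal one and Theorem~\ref{thm:Dfing} fixes that number at $m$), and apply Theorem~\ref{thm:Dfing} together with the completely $0$-simple principal factor argument to upgrade $\R$ to $\H$. That last step is exactly what the paper does via \cite[Proposition~4.1(2)]{YDG2}. Your reduction of the regularity of $\ol{xp_1}$ to the single relation $\ol{xu_1f_1}\;\L\;\ol{f_1}$ is also correct.

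The genuine gap is in your positional analysis. You aim to show that for an \emph{arbitrary} minimal r-factorisation $xwy=q_1\cdots q_m$ the first factor $q_1$ coincides with $xp_1$ as a word. This is false in general. Minimal r-factorisations are not unique, and the boundary between $q_1$ and $q_2$ need not align with the boundary between $p_1$ and $p_2$: if $w$ itself admits a second minimal r-factorisation $p_1'p_2'\cdots p_m'$ with $|p_1'|\neq|p_1|$, then already with $x=y$ empty one has $q_1=p_1'\neq p_1=xp_1$. Your sketched contradictions for the two cases do not hold up either: if $q_1$ ends strictly inside $p_1$ but past the seed $f_1$, there is no ``second seed in $D_1$'' forced anywhere (the remaining factors live in $D_2,\dots,D_m$); and if $q_1$ extends into $p_2$, the regularity of some $\ol{p_1^{(\text{suffix})}p_2^{(\text{prefix})}}$ is not prohibited by minimality of $p_1\cdots p_m$, which only forbids $\ol{p_1p_2}$ itself from being regular.

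What is both true and sufficient is the weaker assertion that $q_1$ \emph{contains the seed} $f_1$ of $p_1$. This already yields what you need: since $\ol{q_1}\;\D\;\ol{p_1}\;\D\;\ol{f_1}$, Lemma~\ref{lem:conv} makes $f_1$ a seed of $q_1$, whence $\ol{xu_1f_1}\;\L\;\ol{f_1}$ immediately. The paper establishes precisely this seed-containment by a pigeonhole argument: it fixes a seed $e_i$ in each $p_i$, defines a non-decreasing map $\varphi$ on $\{1,\dots,m\}$ by letting $i\varphi$ record which $q_j$ contains $e_i$, and shows that if $1\varphi>1$ then at the least $k$ with $k\varphi\le k$ two consecutive seeds $e_{k-1},e_k$ fall inside a single $q_k$; via Lemma~\ref{lem:conv} this forces $\ol{p_{k-1}p_k}\;\D\;\ol{q_k}$, contradicting minimality of $w=p_1\cdots p_m$. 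Replacing your attempted pinning of $q_1$ by this seed-containment argument would complete your proof along exactly the lines you intended.
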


\begin{proof}
First of all, consider the case $m=1$. Let $e$ be a seed for $p_1$, so that $p_1=p'ep''$; then, by Lemma \ref{lem:conv}, $e$ is also a seed
for $xp_1y$. Now by the same lemma $\ol{p'e}\;\L\;\ol{xp'e}$, thus $\ol{p_1}\;\L\;\ol{xp_1}$ (by multiplying by $p''$from the right) 
and $\ol{p_1y}\;\L\;\ol{xp_1y}=\ol{p_1}$ (by multiplying by $p''y$ from the right). Dually, we get $\ol{p_1y}\;\R\;\ol{p_1}\;\R\;
\ol{xp_1}$. Hence, for the rest of the proof we may safely assume that $m\geq 2$.

Let $1=\alpha_1<\alpha_2<\dots<\alpha_m$ be the coordinates of the factorisation $w=p_1\dots p_m$. Then, if $|x|=\xi$, the factorisation
$xwy=xp_1\dots p_my$ has coordinates $(\xi,\alpha_1',\dots,\alpha_m',\xi+|w|+1)$, where $\alpha_i'=\xi+\alpha_i$ for all $1\leq i\leq m$. Furthermore,
let us fix one seed $e_i$ from each of the factors $p_i$ (in accordance with Theorem \ref{thm:DGR}); assume that these occur in $xwy$ at positions
$\gamma_i$, $1\leq i\leq m$, where $\alpha_i'\leq\gamma_i<\alpha_{i+1}'$.

Now, the word $xwy$ itself has some minimal r-factorisation 
$$
xwy = q_1\dots q_m;
$$
note that it must have precisely $m$ factors by Theorem \ref{thm:Dfing} since $\ol{xwy}=\ol{w}$. By the same theorem we have $\ol{p_i}\;\D\;\ol{q_i}$
for all $1\leq i\leq m$; in addition, $\ol{p_1}\;\R\;\ol{q_1}$ and $\ol{p_m}\;\L\;\ol{q_m}$. Let $1=\beta_1<\beta_2<\dots<\beta_m$ be the coordinates 
of this factorisation displayed above. 

We claim that $\beta_2>\gamma_1$, i.e.\ that $q_1$ cannot end before reaching the seed $e_1$ of $p_1$. Seeking a contradiction, assume otherwise. 
Define a function 
$$\varphi:\{1,\dots,m\}\to\{1,\dots,m\}$$ 
by setting $i\varphi = j$ if and only if $j$ is the largest integer such that $\beta_j\leq\gamma_i$, in other words, the position of the seed $e_i$ 
lies within the factor $q_j$. Our assumption tells us that $1\varphi>1$; furthermore, by definition, $\varphi$ is non-decreasing. Now select the 
smallest value of $k$ such that $k\varphi\leq k$ (such $k$ must exist because $m\varphi\leq m$). Then in fact $k\varphi=k$, indeed, otherwise 
$k\varphi\leq k-1$ implying $(k-1)\varphi\leq k-1$, contradicting the choice of $k$. Moreover, $k>1$ (by the above assumption) and $(k-1)\varphi = k$ 
(because otherwise $(k-1)\varphi\leq k-1$, yielding the same contradiction as before). Therefore, $\beta_k\leq\gamma_{k-1}<\gamma_k<\beta_{k+1}$, 
that is, both seeds $e_{k-1}$ and $e_k$ lie within $q_k$. This means that if $p_{k-1}=pe_{k-1}q'$ and $p_k=p'e_kq$ we can write
$$
q_k = ue_{k-1}q'p'e_kv
$$
where either $u$ is a suffix of $p$ or $u=u'p$ for some suffix $u'$ of $p_{k-2}$ not containing its designated seed (or of $x$ if $k=2$), 
and $v$ is either a prefix of $q$ or $v=qv'$ for some prefix $v'$ of $p_{k+1}$ not containing its designated seed (or of $y$ if $k=m$).
If $u$ is a suffix of $p$ then by by Lemma \ref{lem:conv} we have $\ol{ue_{k-1}}\;\L\;\ol{pe_{k-1}}$, and since $\L$ is a right congruence this implies
(upon multiplying by $q'p'e_kv$ from the right)
$$
\ol{q_k}\;\L\; \ol{p_{k-1}p'e_kv}.
$$
On the other hand, if $u=u'p$ with $u'$ as above then, since $\ol{q_k}\;\D\;\ol{p_k}\;\D\;\ol{e_k}$, Lemma \ref{lem:conv} tells us that 
$$\ol{ue_{k-1}q'p'e_k}=\ol{u'pe_{k-1}q'p'e_k}\;\L\;\ol{pe_{k-1}q'p'e_k}=\ol{p_{k-1}p'e_k},$$
which, multiplying by $v$ from the right yields $\ol{q_k}\;\L\; \ol{p_{k-1}p'e_kv}$. Hence, this last relation holds in any case. However, dual arguments
to the ones just presented (which are left to the reader as exercise) would show that 
$$
\ol{p_{k-1}p'e_kv}\;\R\; \ol{p_{k-1}p'e_kq}=\ol{p_{k-1}p_k}.
$$
We conclude that $\ol{p_k}\;\D\;\ol{q_k}\;\D\; \ol{p_{k-1}p_k}$, contradicting the non-regularity of $\ol{p_{k-1}p_k}$.

So, now we know that indeed $\beta_2>\gamma_1$. Dually, we can conclude that $\beta_m\leq\gamma_m$ (meaning that the factor $q_m$ contains the designated
seed $e_m$ of $p_m$; in fact, now it would be rather easy to prove that the function $\varphi$ must be the identity mapping). So, if $p_1=
p_1'e_1p_1''$, then $q_1=xp_1'e_1u$ for some word $u$ such that one of $u,p_1''$ is the prefix of another. Then, by using the seed $e_1$ either in $p_1$
or in $q_1$ and Lemma \ref{lem:conv} as above, we would arrive at the conclusion that $\ol{e_1u}\;\R\;\ol{e_1p_1''}$ and so, by Theorem \ref{thm:Dfing} we
have $\ol{p_1}\;\R\;\ol{q_1}=\ol{xp_1'e_1u}\;\R\;\ol{xp_1'e_1p_1''}=\ol{xp_1}$. In particular, $\ol{p_1}\;\D\;\ol{xp_1}$. But then \cite[Proposition 4.1(2)]{YDG2}
implies that we also have $\ol{p_1}\;\L\;\ol{xp_1}$, thus $\ol{p_1}\;\H\;\ol{xp_1}$. The relation $\ol{p_my}\;\H\; \ol{p_m}$ follows by left-right
duality.
\end{proof}

\begin{thm}\label{thm:Green}
Let $\cE$ be a finite biorder, and let $\mathbf{x}=(i_1,a_1,\lambda_1)\dots(i_m,a_m,\lambda_m)\in\ig{\cE}$ with $\D$-fingerprint $(D_1,\dots,D_m)$. 
Furthermore, let $\mathbf{y}\in\ig{\cE}$.
\begin{itemize}
\item[(1)] $\mathbf{x}\;\R\;\mathbf{y}$ if and only if $\mathbf{y}=(i_1,a_1,\lambda_1)\dots(i_m,b,\mu)$ for some $b\in G_m$ and $\mu\in\Lambda_m$.
\item[(2)] $\mathbf{x}\;\L\;\mathbf{y}$ if and only if $\mathbf{y}=(j,c,\lambda_1)\dots(i_m,a_m,\lambda_m)$ for some $c\in G_1$ and $j\in I_1$.
\item[(3)] $\mathbf{x}\;\H\;\mathbf{y}$ if and only if there exist $b\in G_m$, $c\in G_1$, $\mu\in\Lambda_m$ and $j\in I_1$ such that
$$
\mathbf{y} = (i_1,a_1,\lambda_1)\dots(i_m,b,\mu) = (j,c,\lambda_1)\dots(i_m,g,\lambda_m)
$$
holds in $\ig{\cE}$.
\item[(4)] $\mathbf{x}\;\D\;\mathbf{y}$ if and only if there exist $b\in G_m$, $c\in G_1$, $\mu\in\Lambda_m$ and $j\in I_1$ such that
$$
\mathbf{y} = (j,c,\lambda_1)\dots(i_m,b,\mu)
$$
holds in $\ig{\cE}$.
\item[(5)] $\D=\J$ holds in $\ig{\cE}$.
\end{itemize}
\end{thm}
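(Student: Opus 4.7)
The plan is to prove Theorem~\ref{thm:Green} by first establishing (1) from Proposition~\ref{pro:H} combined with the Rees matrix form of $D_m$, then deriving (2) by left-right duality, (3) from $\H=\R\cap\L$, and (4) from $\D=\R\circ\L$, and finally handling (5) by a more delicate minimality argument in the same spirit. The key technical enhancement of Proposition~\ref{pro:H} that I will use throughout is the following interpolation: if $\ol{xwy}=\ol{w}$ and $y$ factors as $y=st$, then from $\ol{p_my}\;\H\;\ol{p_m}$ (hence $\ol{p_my}\;\R\;\ol{p_m}$) and $\ol{p_my}=\ol{p_ms}\,\ol{t}\leq_\R\ol{p_ms}\leq_\R\ol{p_m}$ we conclude $\ol{p_ms}\;\R\;\ol{p_m}$; the dual $\L$-statement $\ol{sp_1}\;\L\;\ol{p_1}$ holds by symmetric reasoning whenever $x=ts$. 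So the ``left and right padding'' in Proposition~\ref{pro:H} can be absorbed not just into the $\H$-class of the endpoint factors, but also into each of the $\R$- and $\L$-classes independently.

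For (1) forwards, I take $\mathbf{t},\mathbf{z}\in\ig{\cE}^1$ with $\mathbf{y}=\mathbf{xt}$ and $\mathbf{x}=\mathbf{yz}$, so that $\ol{wtz}=\ol{w}$. Proposition~\ref{pro:H} gives $\ol{p_mtz}\;\H\;\ol{p_m}$, and the interpolation (with $y=tz$, $s=t$) yields $\ol{p_mt}\;\R\;\ol{p_m}$; this makes $\ol{p_mt}$ regular with $I_m$-coordinate $i_m$, of the form $(i_m,b,\mu)$ for some $b\in G_m$ and $\mu\in\Lambda_m$. Then $wt=p_1\dots p_{m-1}(p_mt)$ is a length-$m$ r-factorisation of $\mathbf{y}$. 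I verify its minimality by showing that any candidate regular concatenation $\ol{p_i\dots p_mt}$ with $i<m$ satisfies $\ol{p_i\dots p_mt}\;\R\;\ol{p_i\dots p_m}$ (multiply on the right by $\ol{z}$ and use $\ol{p_mtz}\;\R\;\ol{p_m}$), which would force $\ol{p_i\dots p_m}$ regular and contradict minimality of $w$'s factorisation. Reading triples off this minimal r-factorisation then exhibits $\mathbf{y}$ in the claimed form. For the converse, picking $k\in I_m$ with $p_{\lambda_m k}\neq 0$ in $D_m$'s sandwich matrix, a direct Rees matrix computation confirms that $(i_m,a_m,\lambda_m)\cdot(k,p_{\lambda_m k}^{-1}a_m^{-1}b,\mu)=(i_m,b,\mu)$ inside the principal factor of $D_m$, hence inside $\ig{\cE}$, so $\mathbf{x}\cdot(k,p_{\lambda_m k}^{-1}a_m^{-1}b,\mu)=\mathbf{y}$; a symmetric witness gives $\mathbf{y}\cdot(\dots)=\mathbf{x}$. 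Parts (2)--(4) then follow from (1) by duality and by combining (1)--(2) via $\H=\R\cap\L$ and $\D=\R\circ\L$.

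For (5), since $\D\subseteq\J$ always holds, only $\J\subseteq\D$ is at stake. If $\mathbf{x}$ is regular then $D_\mathbf{x}=J_\mathbf{x}$ by \cite[Lemma 2.4]{YDG2}, so $\mathbf{y}\in J_\mathbf{x}=D_\mathbf{x}$ and there is nothing to show. Otherwise $m\geq 2$; from $\mathbf{y}=\mathbf{axb}$ and $\mathbf{x}=\mathbf{cyd}$ we get $\mathbf{x}=(\mathbf{ca})\mathbf{x}(\mathbf{bd})$, so Proposition~\ref{pro:H} and the interpolation yield $\ol{ap_1}\;\L\;\ol{p_1}$ (regular, with $\Lambda_1$-coordinate $\lambda_1$) and $\ol{p_mb}\;\R\;\ol{p_m}$ (regular, with $I_m$-coordinate $i_m$). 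Thus $awb=(ap_1)p_2\dots p_{m-1}(p_mb)$ is a length-$m$ r-factorisation of $\mathbf{y}$. I check minimality by exhausting the types of hypothetically regular consecutive concatenations: purely medial $\ol{p_i\dots p_j}$ contradict minimality of $w$; left-end $\ol{ap_1\dots p_j}$ (with $j\geq 2$) are $\L$-equivalent to $\ol{p_1\dots p_j}$ (since $\L$ is a right congruence and $\ol{ap_1}\;\L\;\ol{p_1}$), so regularity of either forces regularity of the other, again contradicting minimality of $w$; right-end concatenations are treated dually; finally, if the full $\mathbf{y}=\ol{awb}$ were itself regular, then $\mathbf{x}\in J_\mathbf{y}=D_\mathbf{y}$ would force $\mathbf{x}$ regular, contradicting this case. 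The resulting minimal r-factorisation places $\mathbf{y}$ in the form prescribed by (4), so $\mathbf{x}\;\D\;\mathbf{y}$.

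The main obstacle I foresee is precisely this last sub-case of (5): ruling out that $\mathbf{y}=\ol{awb}$ is itself regular is the only step that does not reduce to Proposition~\ref{pro:H} plus the $\L$/$\R$-interpolation, and it depends crucially on the prior non-trivial fact from \cite[Lemma 2.4]{YDG2} that $D$- and $J$-classes coincide on regular elements of $\ig{\cE}$. Everything else is careful but routine bookkeeping of how $\L$- and $\R$-equivalences propagate under one-sided multiplication.
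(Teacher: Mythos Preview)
Your argument is correct and, for parts (1)--(4), follows essentially the same route as the paper: apply Proposition~\ref{pro:H}, interpolate to extract $\ol{p_mt}\;\R\;\ol{p_m}$ (respectively $\ol{ap_1}\;\L\;\ol{p_1}$), and read off the Rees matrix coordinates. The paper's converse for (1) is marginally slicker (it just invokes that $\R$ is a left congruence rather than exhibiting an explicit Rees matrix witness), but the content is identical.

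For (5) your argument is also correct but takes a longer detour than the paper. You reconstruct a full minimal r-factorisation of $\mathbf{y}=\ol{awb}$ and then invoke (4), which forces the four-way case analysis (medial, left-end, right-end, and the awkward ``$\mathbf{y}$ regular'' sub-case requiring \cite[Lemma~2.4]{YDG2}). The paper avoids all of this: from $\ol{cap_1}\;\H\;\ol{p_1}$ and $\ol{p_mbd}\;\H\;\ol{p_m}$ it simply propagates via the one-sided congruence properties of $\R$ and $\L$ to produce directly an intermediate element $\mathbf{z}$ with $\mathbf{x}\;\R\;\mathbf{z}\;\L\;\mathbf{y}$, never touching minimality or regularity. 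In fact your own argument can be shortened the same way: once you have $\ol{ap_1}\;\L\;\ol{p_1}$ and $\ol{p_mb}\;\R\;\ol{p_m}$, you already have $\mathbf{y}=(j,c,\lambda_1)(i_2,a_2,\lambda_2)\cdots(i_m,b',\mu)$ as an equality in $\ig{\cE}$, and the converse direction of (4) applies immediately---that converse only uses that $\R,\L$ are one-sided congruences, so minimality of this r-factorisation is irrelevant and the whole case analysis (including the regular sub-case) can be dropped.
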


\begin{proof}
Throughout the proof, we assume that $u,v\in E^*$ are such that $\mathbf{x}=\ol{u}$ and $\mathbf{y}=\ol{v}$, where the given decomposition of $\mathbf{x}$
is induced by a minimal r-factorisation of $u$, namely $u=p_1\dots p_m$ (so that $\ol{p_k}=(i_k,a_k,\lambda_k)$).

(1) The statement $\mathbf{x}\;\R\;\mathbf{y}$ is equivalent to the existence of words $s,t\in E^*$ such that $\ol{us}=\ol{v}$ and $\ol{vt}=\ol{u}$. In
particular, $\ol{ust}=\ol{u}$, so by the previous proposition $\ol{p_mst}\;\H\;\ol{p_m}$. This implies $\ol{p_ms}\;\R\;\ol{p_m}$, prompting $v$ to have 
the same $\D$-fingerprint as $u$. Hence, $\ol{p_ms}=(i_m,b,\mu)$ for some $b\in G_m$ and $\mu\in\Lambda_m$, implying $\mathbf{y}=\ol{v}=\ol{us}=
\ol{p_1\dots p_ms}$ to have the indicated form. Conversely, as $(i_m,a_m,\lambda_m)\;\R\;(i_m,b,\mu)$ for any $b\in G_m$ and $\mu\in\Lambda_m$ and 
$\R$ is a left congruence, any element $\mathbf{y}$ of the given form is $\R$-related to $\mathbf{x}$.

(2) This follows by an argument that is completely dual to (1).

(3) This is an immediate consequence of (1) and (2).

(4) We have $\mathbf{x}\;\D\;\mathbf{y}$ if and only if $\mathbf{x}\;\R\;\mathbf{z}\;\L\;\mathbf{y}$ for some $\mathbf{z}\in\ig{\cE}$. From the first 
relation it follows that any such $\mathbf{z}$ must have the form $(i_1,a_1,\lambda_1)\dots(i_m,b,\mu)$ for some $b\in G_m$ and $\mu\in\Lambda_m$,
whence $\mathbf{z}\;\L\;\mathbf{y}$ implies that $\mathbf{y}$ has the required form. Conversely, if $\mathbf{y} = (j,c,\lambda_1)\dots(i_m,b,\mu)$
for arbitrary $b\in G_m$, $c\in G_1$, $\mu\in\Lambda_m$ and $j\in I_1$, then $\mathbf{x}\;\R\;\mathbf{z}\;\L\;\mathbf{y}$ for $\mathbf{z}=
(i_1,a_1,\lambda_1)\dots(i_m,b,\mu)$.

(5) Assume that $\mathbf{x}\;\J\;\mathbf{y}$; then there are $q,r,s,t\in E^*$ such that $\ol{qur}=\ol{v}$ and $\ol{svt}=\ol{u}$. Therefore, $\ol{squrt}=\ol{u}$,
so the previous proposition tells us that $\ol{sqp_1}\;\H\;\ol{p_1}$ and $\ol{p_mrt}\;\H\;\ol{p_m}$. In particular, $\ol{p_mrt}\;\R\;\ol{p_m}$, so (upon
multiplication by $\ol{p_1\dots p_{m-1}}$ from the left) we have $\ol{urt}\;\R\;\ol{u}$. Similarly, $\ol{sqp_1}\;\L\;\ol{p_1}$ implies $\ol{squrt}\;\L\;\ol{urt}$.
Thus we have $\mathbf{x}\;\R\;\ol{urt}\;\L\;\mathbf{y}$, i.e.\ $\mathbf{x}\;\D\;\mathbf{y}$.
\end{proof}

Bearing in mind Theorem \ref{thm:ig-wp-re} and using the map $\theta$ introduced in the previous section, there is an even more compact way
of expressing Green's relations in $\ig{\cE}$.

\begin{cor}\label{cor:Green}
Let $$\mathbf{x}=(i_1,a_1,\lambda_1)\dots(i_m,a_m,\lambda_m)$$ and $$\mathbf{y}=(j_1,b_1,\mu_1)\dots(j_m,b_m,\mu_m)$$ be two elements of $\ig{\cE}$
of $\D$-fingerprint $(D_1,\dots,D_m)$. 
\begin{itemize}
\item[(1)] $\mathbf{x}\;\R\;\mathbf{y}$ if and only if $i_1=j_1$ and $(\{a_1^{-1}b_1\},\mathbf{x},\mathbf{y})\theta\neq\es$.
\item[(2)] $\mathbf{x}\;\L\;\mathbf{y}$ if and only if $\lambda_m=\mu_m$ and $b_ma_m^{-1}\in(G_1,\mathbf{x},\mathbf{y})\theta$.
\item[(3)] $\mathbf{x}\;\H\;\mathbf{y}$ if and only if $i_1=j_1$, $\lambda_m=\mu_m$, $(\{a_1^{-1}b_1\},\mathbf{x},\mathbf{y})\theta\neq\es$ and
$b_ma_m^{-1}\in(G_1,\mathbf{x},\mathbf{y})\theta$.
\item[(4)] $\mathbf{x}\;\D\;\mathbf{y}$ if and only if $(G_1,\mathbf{x},\mathbf{y})\theta\neq\es$.
\end{itemize}
\end{cor}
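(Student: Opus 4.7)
The plan is to derive each of (1)--(4) by combining the Rees-triple characterisations from Theorem \ref{thm:Green} with the word-problem criterion Theorem \ref{thm:ig-wp-re}. Each clause of Theorem \ref{thm:Green} asserts the existence of an alternative decomposition $\mathbf{y}'$ of $\mathbf{y}$ in Rees-triple form, with certain outer entries being free parameters; I would translate the equation $\mathbf{y}'=\mathbf{y}$ via Theorem \ref{thm:ig-wp-re} into a statement about $\theta$, and then absorb the free parameters into set operations.

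Before the case analysis, I would record two elementary features of the map $\theta$, both visible from its iterative definition. First, $(A,\mathbf{u},\mathbf{v})\theta$ depends on $A$, on the middle Rees triples of $\mathbf{u}$ and $\mathbf{v}$, and on the outer indices $\lambda_1,i_m,\mu_1,j_m$, but does \emph{not} depend on the middle group components $a_1,b_1,a_m,b_m$ of the first or last triples. Second, $\theta$ is union-preserving in its set argument, i.e.\ $(\bigcup_\iota A_\iota,\mathbf{u},\mathbf{v})\theta = \bigcup_\iota (A_\iota,\mathbf{u},\mathbf{v})\theta$, because each step of the iteration (intersection with $W_{(\lambda_k,i_{k+1})}g_{k,k+1}$ followed by projection to the second coordinate) commutes with unions.

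For part (1), the forward direction uses Theorem \ref{thm:Green}(1) to produce $b\in G_m$ and $\mu\in\Lambda_m$ with $\mathbf{y}=(i_1,a_1,\lambda_1)\dots(i_m,b,\mu)$ (forcing $j_1=i_1$), and then applies Theorem \ref{thm:ig-wp-re} to this identity; by the invariance observation, the resulting $\theta$-set coincides with $(\{a_1^{-1}b_1\},\mathbf{x},\mathbf{y})\theta$, and its non-emptiness is witnessed by the membership of $b_m b^{-1}$. Conversely, picking any $g$ in this set and setting $b=g^{-1}b_m$, $\mu=\mu_m$ produces a $\mathbf{y}'$ which equals $\mathbf{y}$ by Theorem \ref{thm:ig-wp-re}, whence $\mathbf{x}\;\R\;\mathbf{y}$ by Theorem \ref{thm:Green}(1). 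Part (2) is analogous but with the free parameter $c\in G_1$ entering the first triple, so the forward direction yields $b_m a_m^{-1}\in(\{c^{-1}b_1\},\mathbf{x},\mathbf{y})\theta$ for \emph{some} $c$; taking the union over $c$ and using the set-linearity of $\theta$ collapses this to $b_m a_m^{-1}\in (G_1 b_1,\mathbf{x},\mathbf{y})\theta = (G_1,\mathbf{x},\mathbf{y})\theta$, and the converse unpacks this union to recover $c$. Part (3) is then immediate from (1) and (2) since $\H=\R\cap\L$; part (4) combines both free parameters ($c$ in the first triple, $b$ in the last), and the freedom in $b$ again collapses the final containment to the assertion that $(G_1,\mathbf{x},\mathbf{y})\theta$ is non-empty.

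The main obstacle is the bookkeeping around the two invariance properties of $\theta$: one must verify that replacing the first or last triple's middle group component really does not alter $(A,\mathbf{u},\mathbf{v})\theta$, and that set-linearity propagates properly through all $m-1$ iteration steps so that the union over $c\in G_1$ can be pulled out. Both facts follow by direct inspection of the iterative formula for $\theta$, but spelling this out cleanly is essential to avoid errors when replacing $\mathbf{y}'$ by $\mathbf{x}$ inside $\theta$. Once these are in place, the corollary reduces to a mechanical translation of Theorem \ref{thm:Green} through Theorem \ref{thm:ig-wp-re}.
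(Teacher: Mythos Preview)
Your proposal is correct and follows essentially the same route as the paper: you combine Theorem~\ref{thm:Green} with Theorem~\ref{thm:ig-wp-re} in each case, just as the paper does, and the two ``invariance'' observations you single out (independence of $\theta$ from the outermost group entries and row/column indices, and union-preservation in the set argument) are precisely the facts the paper uses implicitly when it writes things like ``$b_mb^{-1}\in(\{a_1^{-1}b_1\},\mathbf{x},\mathbf{y})\theta$'' and ``$b_ma_m^{-1}\in(\{g\},\mathbf{x},\mathbf{y})\theta$ for some $g\in G_1$, i.e.\ $b_ma_m^{-1}\in(G_1,\mathbf{x},\mathbf{y})\theta$''. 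If anything, making these two properties explicit at the outset is a small expository improvement over the paper, which only spells out the first one later (in the proof of Proposition~\ref{pro:theta-subg}).
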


\begin{proof}
(1) By the previous theorem, we have $\mathbf{x}\;\R\;\mathbf{y}$ if and only if there exist $b\in G_m$ and $\mu\in\Lambda_m$ such that 
$$
(i_1,a_1,\lambda_1)\dots(i_m,b,\mu)=(j_1,b_1,\mu_1)\dots(j_m,b_m,\mu_m).
$$
By Theorem \ref{thm:ig-wp-re}, this is equivalent to $i_1=j_1$, $\mu=\mu_m$ (the sole effect of this being that $\mu$ is uniquely determined), and
$b_mb^{-1}\in(\{a_1^{-1}b_1\},\mathbf{x},\mathbf{y})\theta$. So, the condition $\mathbf{x}\;\R\;\mathbf{y}$ is equivalent to the existence of
$b\in G_m$ such that $b\in [(\{a_1^{-1}b_1\},\mathbf{x},\mathbf{y})\theta]b_m$, which is in turn equivalent to $(\{a_1^{-1}b_1\},\mathbf{x},\mathbf{y})\theta$
being non-empty.

(2) We have $\mathbf{x}\;\L\;\mathbf{y}$ if and only if 
$$
(j,c,\lambda_1)\dots(i_m,a_m,\lambda_m)=(j_1,b_1,\mu_1)\dots(j_m,b_m,\mu_m)
$$
for some $c\in G_1$ and $j\in I_1$. Again by Theorem \ref{thm:ig-wp-re}, this is equivalent to $j=j_1$, $\lambda_m=\mu_m$ and the existence of $c\in G_1$
such that $b_ma_m^{-1} \in (\{c^{-1}b_1\},\mathbf{x},\mathbf{y})\theta$. Clearly, the latter condition is equivalent to saying that $b_ma_m^{-1} \in 
(\{g\},\mathbf{x},\mathbf{y})\theta$ for some $g\in G_1$, i.e.\ that $b_ma_m^{-1} \in (G_1,\mathbf{x},\mathbf{y})\theta$.

(3) This follows directly from (1) and (2).

(4) The condition $\mathbf{x}\;\D\;\mathbf{y}$ is equivalent to the equality
$$
(j,c,\lambda_1)\dots(i_m,b,\mu) = (j_1,b_1,\mu_1)\dots(j_m,b_m,\mu_m)
$$
holding for some $b\in G_m$, $c\in G_1$, $\mu\in\Lambda_m$ and $j\in I_1$. Employing Theorem \ref{thm:ig-wp-re} yet again, the latter assertion is further
equivalent to $j=j_1$, $\mu=\mu_m$, and the condition $b_mb^{-1}\in(\{c^{-1}b_1\},\mathbf{x},\mathbf{y})\theta$ holding for some $b\in G_m$ and $c\in G_1$. 
Similarly as in (1) and (2), this is the same as saying that $(\{g\},\mathbf{x},\mathbf{y})\theta$ is not empty for some $g\in G_1$, i.e.\ that 
$(G_1,\mathbf{x},\mathbf{y})\theta\neq\es$.
\end{proof}

An immediate application of the previous result concerns decidability of Green's relations: given two elements of $\ig{\cE}$ we would like
to decide if they are $\R$-/$\L$-/$\H$-/$\D$-related. We instantly see that under the same assumptions as in Theorem \ref{thm:ig-wp-dec} (which 
guarantee decidability of the word problem) such algorithms exist, because basically they ensure that the map $\theta$ is effectively computable 
in the sense discussed in Theorem \ref{thm:theta}(3) and that one can decide membership in any given finitely generated subgroup of a maximal 
subgroup of $\ig{\cE}$. 

\begin{thm}
Let $\cE$ be a finite biordered set satisfying the conditions (i)--(iii) of Theorem \ref{thm:ig-wp-dec}. Then $\ig{\cE}$ has decidable Green's
relations.
\end{thm}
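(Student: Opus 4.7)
The plan is to reduce each of the five Green's relations to finitely many conditions that the machinery of Section \ref{sec:gain} renders algorithmically testable. The main ingredients are Corollary \ref{cor:Green}, Theorem \ref{thm:theta}(3), and the decidability of the subgroup membership problem in each maximal subgroup (hypothesis (iii)).

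Given two words $u,v\in E^+$ representing elements $\mathbf{u},\mathbf{v}\in\ig{\cE}$, I would first use the rewriting procedure of \cite[Theorem 4.3, Proposition 4.1]{YDG2} (recalled in Subsection \ref{subsec:ige2}) to compute their minimal r-factorisations and the corresponding Rees-matrix-triple decompositions, together with their $\D$-fingerprints. If the two $\D$-fingerprints differ, then by parts (1)--(4) of Theorem \ref{thm:Green} none of $\R,\L,\H,\D$ can hold (and hence neither can $\J$, by part (5)). So assume both elements share the same $\D$-fingerprint $(D_1,\dots,D_m)$ and are written as $\mathbf{u}=(i_1,a_1,\lambda_1)\dots(i_m,a_m,\lambda_m)$ and $\mathbf{v}=(j_1,b_1,\mu_1)\dots(j_m,b_m,\mu_m)$.

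Next, Corollary \ref{cor:Green} expresses each of $\R,\L,\H,\D$ in terms of (a) the coordinate equalities $i_1=j_1$ and/or $\lambda_m=\mu_m$, which are trivially decidable; (b) non-emptiness of the set $(\{a_1^{-1}b_1\},\mathbf{u},\mathbf{v})\theta$ or of $(G_1,\mathbf{u},\mathbf{v})\theta$; and (c) membership of $b_ma_m^{-1}$ in $(G_1,\mathbf{u},\mathbf{v})\theta$. Hypotheses (i) and (ii) of Theorem \ref{thm:ig-wp-dec} are exactly what is needed to apply Theorem \ref{thm:theta}(3) to each of these inputs: in the first case the initial subgroup is $L=\{1\}$ (trivially finitely generated), and in the second case $L=G_1$, which is a maximal subgroup of $\ig{\cE}$ and hence finitely generated with an effectively computable generating set by \cite[Theorem 3.10]{DGR}. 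In either situation, the algorithm of Theorem \ref{thm:theta}(3) either detects that the output set is empty, or returns a finite generating set of a subgroup $M\leq G_m$ together with a coset representative $y\in G_m$ such that the set equals $yM$.

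With this data, the non-emptiness questions are answered immediately, while the membership test $b_ma_m^{-1}\in yM$ is equivalent to $y^{-1}b_ma_m^{-1}\in M$ and is therefore decidable by hypothesis (iii). Assembling these subroutines according to parts (1)--(4) of Corollary \ref{cor:Green} yields algorithms for $\R,\L,\H,\D$, and decidability of $\J$ follows from $\D=\J$ by Theorem \ref{thm:Green}(5). No substantive obstacle arises beyond the book-keeping already carried out in the proof of Theorem \ref{thm:ig-wp-dec}; the argument is essentially a direct packaging of Corollary \ref{cor:Green} with the effective machinery of Section \ref{sec:gain}.
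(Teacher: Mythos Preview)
Your proposal is correct and follows essentially the same approach as the paper. The paper does not give a formal proof of this theorem; it merely remarks in the paragraph preceding the statement that the result is immediate from Corollary \ref{cor:Green} together with the effective computability of $\theta$ (Theorem \ref{thm:theta}(3)) and the subgroup membership assumption, which is precisely the argument you have spelled out in detail.
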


\begin{rmk}
As noticed in Remark \ref{rmk:almost-finite}, the previous result includes finite biorders satisfying the assumptions of \cite[Theorem 6.1]{YDG2}
(described in detail on p.1029-1030 of that paper), where all non-maximal regular $\D$-classes of $\ig{\cE}$ have finite maximal subgroups.
\end{rmk}

We now move on to our second objective, which revolves around the question of computing the \Sch group associated with the $\D$-class of an element
$\mathbf{x}\in\ig{\cE}$. This group determines a great deal the structural features of $D_\mathbf{x}$. \Sch groups were originally introduced by
\Sch \cite{Sch1,Sch2} as generalisations of maximal subgroups of regular $\D$-classes to arbitrary $\D$-classes.

So, let $D$ be an arbitrary $\D$-class of a semigroup $S$, and let $H\subseteq D$ be one of its $\H$-classes. The set of all elements 
$s\in S^1$ such that $Hs\subseteq H$ is denoted by $\mathrm{Stab}(H)$ and called the (\emph{right}) \emph{stabiliser} of $H$. Then Green's Lemma 
\cite[Lemma 2.2.1]{HoBook} ensures that $s\in\mathrm{Stab}(H)$ implies the stronger assertion $Hs=H$ and that the right translation mapping
$\rho_s:h\mapsto hs$, $h\in H$, is a permutation of $H$. Now define an equivalence $\sigma_H$ on $\mathrm{Stab}(H)$ by $(s,t)\in\sigma_H$ if 
and only if $\rho_s=\rho_t$ (i.e.\ if $s,t$ induce the same permutation on $H$). Then the quotient set $\mathrm{Stab}(H)/\sigma_H$ (actually,
$\mathrm{Stab}(H)$ is a submonoid of $S^1$ and $\sigma_H$ is readily seen to be a congruence, so the latter quotient is a well-defined monoid)
is naturally identified with the collection of permutations $\{\rho_s: s\in\mathrm{Stab}(H)\}$, and the latter is easily seen to be a group: 
this is the \emph{(right) \Sch group} $\Gamma_H$ of $H$. Here are several well-known basic facts about \Sch groups (see \cite{HoBook}):
\begin{itemize}
\item the permutation group $\Gamma_H$ acts regularly on $H$, and thus $|\Gamma_H|=|H|$;
\item if $H$ is itself a group (implying that $D$ is regular) then $\Gamma_H\cong H$;
\item if $H'$ is another $\H$-class contained in $D$ then $\Gamma_{H'}\cong\Gamma_H$.
\end{itemize}
Because of this latter property, the \Sch group is indeed an invariant of a $\D$-class. In a dual manner, one can define the \emph{left \Sch group}
$\ol\Gamma_H$ of an $\H$-class $H$; however, it is known that we always have $\ol\Gamma_H\cong\Gamma_H$, so in this sense we may simply speak about
the \Sch group of an $\H$-class (and so in fact of a $\D$-class).

We begin with a number of quite relevant observations.

\begin{pro}\label{pro:theta-subg}
For all finite biordered sets $\cE$, the following holds in $\ig{\cE}$ for an element $\mathbf{x}=(i_1,a_1,\lambda_1)\dots(i_m,a_m,\lambda_m)$
of $\D$-fingerprint $(D_1,\dots,D_m)$:
\begin{itemize}
\item[(1)] For any subgroup $K\leq G_1$, $(K,\mathbf{x},\mathbf{x})\theta$ is a subgroup of $G_m$.
\item[(2)] For any $g\in G_m$ we have $g\in (E,\mathbf{x},\mathbf{x})\theta$ if and only if there exist $x_r\in G_r$, $2\leq r\leq m-1$, such that 
\begin{align*}
      (1, x_2) &\in W_{(\lambda_1,i_2)},\\
      (a_2^{-1}x_2^{-1}a_2,x_3) &\in W_{(\lambda_2,i_3)},\\
			&\ \vdots\\
			(a_{m-2}^{-1}x_{m-2}^{-1}a_{m-2},x_{m-1}) &\in W_{(\lambda_{m-2},i_{m-1})},\\
      (a_{m-1}^{-1}x_{m-1}^{-1}a_{m-1},g) &\in W_{(\lambda_{m-1},i_m)}.
\end{align*}
\item[(3)] For any $g\in G_m$ we have $g\in (G_1,\mathbf{x},\mathbf{x})\theta$ if and only if there exist $y_r\in G_r$, $1\leq r\leq m-1$, such that 
\begin{align*}
      (y_1, y_2) &\in W_{(\lambda_1,i_2)},\\
      (a_2^{-1}y_2^{-1}a_2,y_3) &\in W_{(\lambda_2,i_3)},\\
			&\ \vdots\\
			(a_{m-2}^{-1}y_{m-2}^{-1}a_{m-2},y_{m-1}) &\in W_{(\lambda_{m-2},i_{m-1})},\\
      (a_{m-1}^{-1}y_{m-1}^{-1}a_{m-1},g) &\in W_{(\lambda_{m-1},i_m)}.
\end{align*}
\item[(4)] $(E,\mathbf{x},\mathbf{x})\theta \lnor (G_1,\mathbf{x},\mathbf{x})\theta$.
\end{itemize}
\end{pro}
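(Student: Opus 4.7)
The common observation that drives all four parts is that when $\mathbf{u}=\mathbf{v}=\mathbf{x}$, every coset-shift label $g_{k,k+1}$ appearing in the iterative definition of $\theta$ is the label of a walk from $(\lambda_k,i_{k+1})$ to itself in $\mathcal{A}(D_k,D_{k+1})$, hence the gain of a closed walk; therefore $g_{k,k+1}\in W_{(\lambda_k,i_{k+1})}$ and so $W_{(\lambda_k,i_{k+1})}g_{k,k+1}=W_{(\lambda_k,i_{k+1})}$. Consequently each intersection $B_{k,k+1}$ arising in the recursion is the intersection of two \emph{subgroups} of $G_k\times G_{k+1}^\partial$.

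For (1), I would invoke Theorem \ref{thm:theta}(1) applied with $Lx:=K\cdot 1$ to conclude that $(K,\mathbf{x},\mathbf{x})\theta$ is either empty or a left coset of some subgroup of $G_m$. The observation above shows that at each stage the identity of $G_k\times G_{k+1}^\partial$ lies in $B_{k,k+1}$, so $1\in G_{k+1}$ propagates through every second projection; this rules out the empty case and forces the resulting left coset to be a subgroup. Parts (2) and (3) then follow by a straightforward unwinding of the iterative definition of $\theta$, starting from $A_1=\{1\}$ and $A_1=G_1$ respectively, once the reduction $W_{(\lambda_k,i_{k+1})}g_{k,k+1}=W_{(\lambda_k,i_{k+1})}$ is in hand; the only difference between the two cases is that in (3) the first coordinate $y_1$ is free in $G_1$, whereas in (2) it is forced to be $1$.

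The main content lies in (4). Setting $N=(E,\mathbf{x},\mathbf{x})\theta$ and $H=(G_1,\mathbf{x},\mathbf{x})\theta$, both are subgroups of $G_m$ by (1) and clearly $N\leq H$. To establish $N\lnor H$ I would fix $n\in N$ and $g\in H$ and aim to show $g^{-1}ng\in N$ (this suffices, since replacing $g$ by $g^{-1}\in H$ gives $gNg^{-1}\subseteq N$). Given witnesses $x_2,\dots,x_{m-1}$ for $n$ as in (2) and $y_1,\dots,y_{m-1}$ for $g$ as in (3), the plan is to produce witnesses $z_2,\dots,z_{m-1}$ for $g^{-1}ng$ by conjugating, inside each vertex group $W_{(\lambda_k,i_{k+1})}$, the pair encoding $n$ by the pair encoding $g$.

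Concretely, writing $u_k=a_k^{-1}x_k^{-1}a_k$ and $v_k=a_k^{-1}y_k^{-1}a_k$ and recalling that the dual multiplication on the second coordinate is $a\ast b=ba$, a direct computation in $G_k\times G_{k+1}^\partial$ yields
$$
(v_k,y_{k+1})(u_k,x_{k+1})(v_k,y_{k+1})^{-1}=\bigl(v_ku_kv_k^{-1},\,y_{k+1}^{-1}x_{k+1}y_{k+1}\bigr),
$$
together with the boundary identities $(y_1,y_2)(1,x_2)(y_1,y_2)^{-1}=(1,\,y_2^{-1}x_2y_2)$ in $W_{(\lambda_1,i_2)}$ and $(v_{m-1},g)(u_{m-1},n)(v_{m-1},g)^{-1}=(v_{m-1}u_{m-1}v_{m-1}^{-1},\,g^{-1}ng)$ in $W_{(\lambda_{m-1},i_m)}$. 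Defining $z_k=y_k^{-1}x_ky_k$ for $2\leq k\leq m-1$, one checks $v_ku_kv_k^{-1}=a_k^{-1}z_k^{-1}a_k$, so these identities give precisely the chain of memberships required by (2) for the element $g^{-1}ng$, whence $g^{-1}ng\in N$. The main obstacle I anticipate is keeping the dual operation in the second coordinate straight: this is exactly the feature that makes conjugation in $G_k\times G_{k+1}^\partial$ produce the correct ``twist'' $y_{k+1}^{-1}x_{k+1}y_{k+1}$ (computed in $G_{k+1}$, not in $G_{k+1}^\partial$) needed to match the inductive pattern.
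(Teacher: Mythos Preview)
Your proposal is correct and follows essentially the same approach as the paper: the key observation that the coset representatives $g_{k,k+1}$ can be taken trivial when $\mathbf{u}=\mathbf{v}=\mathbf{x}$, and the conjugation argument in (4) producing the witnesses $z_k=y_k^{-1}x_ky_k$, match the paper's proof exactly. The only minor difference is in (2) and (3), where you unwind the iterative definition of $\theta$ directly, whereas the paper first recasts the membership $g\in(E,\mathbf{x},\mathbf{x})\theta$ (resp.\ $(G_1,\mathbf{x},\mathbf{x})\theta$) as the equality $\mathbf{x}=\mathbf{x}_g$ (resp.\ $\mathbf{x}=\mathbf{x}_{y_1,g}$) via Theorem~\ref{thm:ig-wp-re} and then reads off the system from Theorem~\ref{thm:ig-wp}; your route is slightly more direct, while the paper's makes the link to the word problem (used again in Theorem~\ref{thm:Sch}) more explicit.
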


\begin{proof}
(1) For this, it suffices to analyse the procedure for computing $(\cdot,\mathbf{x},\mathbf{x})\theta$ when the argument is a subgroup (and when 
the two parameters $\mathbf{u},\mathbf{v}$ are equal). Namely, in this case we can take $g_{k,k+1}=(1,1)$ (as the label of the empty walk) so that 
every involved coset of a vertex group is in fact the vertex group itself. Further, we have $b_k=a_k$ for all $1\leq k\leq m$, and the consequence of this
is that all sets $B_{k,k+1}$, $1\leq k<m$, are subgroups of $G_k\times G_{k+1}^\partial$ (and thus never empty) provided $A_k$ is a subgroup of $G_k$. 
Consequently, if $A_1=K$ is a subgroup of $G_1$, then each of $A_2\dots,A_m$ is a subgroup of $G_2,\dots,G_m$, respectively. Since 
$(K,\mathbf{x},\mathbf{x})\theta=A_m\leq G_m$, the required conclusion follows.

(2) For $g\in G_m$ define $\mathbf{x}_g=(i_1,a_1,\lambda_1)\dots(i_m,ga_m,\lambda_m)$ and notice that 
$$(A,\mathbf{x},\mathbf{x}_g)\theta = (A,\mathbf{x},\mathbf{x})\theta$$ 
holds for any $A\subseteq G_1$ (because the function $(\cdot,\mathbf{u},\mathbf{v})\theta$ by its definition does not depend on the group components 
of the rightmost factors of their parameters $\mathbf{u},\mathbf{v}$). Hence, by Theorem \ref{thm:ig-wp-re} we have $g=ga_ma_m^{-1} \in 
(E,\mathbf{x},\mathbf{x})\theta = (E,\mathbf{x},\mathbf{x}_g)\theta$ if and only if $\mathbf{x}=\mathbf{x}_g$ holds in $\ig{\cE}$. However, this is 
by Theorem \ref{thm:ig-wp} just equivalent to the displayed system of constraints.

(3) First of all, note that $g\in (G_1,\mathbf{x},\mathbf{x})\theta$ if and only if $g\in (\{y_1\},\mathbf{x},\mathbf{x})\theta$ for some $y_1\in G_1$.
Define $\mathbf{x}_{y_1,g}=(i_1,a_1y_1,\lambda_1)\dots(i_m,ga_m,\lambda_m)$ and notice, similarly as in (2) above, that 
$$(A,\mathbf{x},\mathbf{x}_{y_1,g})\theta=(A,\mathbf{x},\mathbf{x})\theta$$ 
holds for any $A\subseteq G_1$. Therefore, $g\in (\{y_1\},\mathbf{x},\mathbf{x})\theta = (\{y_1\},\mathbf{x},\mathbf{x}_{y_1,g})\theta$ is by 
Theorem \ref{thm:ig-wp-re} equivalent to $\mathbf{x}=\mathbf{x}_{y_1,g}$ holding in $\ig{\cE}$, which is, in turn, equivalent to the given system 
by Theorem \ref{thm:ig-wp}.

(4) From the preceding parts of this proposition it is immediate that $(E,\mathbf{x},\mathbf{x})\theta$ is a subgroup of 
$(G_1,\mathbf{x},\mathbf{x})\theta$. To see that it is a normal subgroup, let $g\in (E,\mathbf{x},\mathbf{x})\theta$ and $h\in 
(G_1,\mathbf{x},\mathbf{x})\theta$. Further, let $x_r\in G_r$, $2\leq r\leq m-1$, and $y_r\in G_r$, $1\leq r\leq m-1$, be solutions of the systems 
given in (2) and (3) witnessing these that $g,h$ belong to their respective subgroups. Then:
$$
(1, y_2*x_2*y_2^{-1}) = (y_1,y_2)(1,x_2)(y_1,y_2)^{-1} \in W_{(\lambda_1,i_2)},
$$
for any $2\leq k\leq m-2$ we have
$$
(a_k^{-1}y_k^{-1}x_k^{-1}y_ka_k,y_{k+1}*x_{k+1}*y_{k+1}^{-1}) = $$
$$ = (a_k^{-1}y_k^{-1}a_k,y_{k+1})(a_k^{-1}x_k^{-1}a_k,x_{k+1})(a_k^{-1}y_k^{-1}a_k,y_{k+1})^{-1} \in W_{(\lambda_k,i_{k+1})},
$$
and, finally,
$$
(a_{m-2}^{-1}y_{m-2}^{-1}x_{m-2}^{-1}y_{m-2}a_{m-2},h*g*h^{-1}) = $$
$$= (a_{m-1}^{-1}y_{m-1}^{-1}a_{m-1},h)(a_{m-1}^{-1}x_{m-1}^{-1}a_{m-1},g)(a_{m-1}^{-1}y_{m-1}^{-1}a_{m-1},h)^{-1} \in W_{(\lambda_{m-1},i_m)}.
$$
However, by (2), the meaning of this system of constraints is precisely that the elements $z_r=y_r*x_r*y_r^{-1}=y_r^{-1}x_ry_r\in G_r$, $2\leq r\leq m-1$,
witness that $g*h*g^{-1}=g^{-1}hg\in G_m$ belongs to $(E,\mathbf{x},\mathbf{x})\theta$.
\end{proof}

We are now ready to state the characterisation of the \Sch group of the $\H$-class containing an element $\mathbf{x}$ as above.

\begin{thm}\label{thm:Sch}
Assuming the notation and terminology from the previous proposition, we have
$$
\Gamma_{H_\mathbf{x}} \cong (G_1,\mathbf{x},\mathbf{x})\theta / (E,\mathbf{x},\mathbf{x})\theta.
$$
\end{thm}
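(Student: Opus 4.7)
My plan is to construct an explicit surjective group homomorphism $\beta\colon Q \to \Gamma_{H_\mathbf{x}}$ with kernel exactly $N$, where I abbreviate $Q = (G_1,\mathbf{x},\mathbf{x})\theta$, $N = (E,\mathbf{x},\mathbf{x})\theta$, and $\mathbf{x}_g := (i_1,a_1,\lambda_1)\dots(i_{m-1},a_{m-1},\lambda_{m-1})(i_m, g a_m, \lambda_m)$ for $g\in G_m$. The first step is to parametrise $H_\mathbf{x}$: combining Theorem~\ref{thm:Green}(3) and Corollary~\ref{cor:Green}(3) with the ``invariance of $\theta$ in the rightmost group component'' remark used in the proof of Proposition~\ref{pro:theta-subg}(2), the assignment $g \mapsto \mathbf{x}_g$ carries $Q$ surjectively onto $H_\mathbf{x}$; moreover, Theorem~\ref{thm:ig-wp-re} applied to the pair $(\mathbf{x}_g,\mathbf{x}_{g'})$ reduces the equation $\mathbf{x}_g = \mathbf{x}_{g'}$ to $g'g^{-1} \in N$. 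Thus $H_\mathbf{x}$ is canonically in bijection with the coset space $Q/N$ via $Ng\leftrightarrow\mathbf{x}_g$.

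Next, for each $h\in Q$ I produce a concrete element $s_h\in\ig{\cE}$ realising right translation by $h$ on this parametrisation. Using the Rees matrix form $\mathcal{M}^0[G_m;I_m,\Lambda_m;P_m]$ of the principal factor of $D_m$, pick any index $i'\in I_m$ with $p_{\lambda_m i'}\neq 0$ (which exists by the Rees--Su\v{s}kevi\v{c} structure) and set
\[
s_h \;=\; \bigl(i',\, p_{\lambda_m i'}^{-1}\,a_m^{-1}\,h\,a_m,\,\lambda_m\bigr)\in D_m\subseteq\ig{\cE}.
\]
A direct Rees-matrix computation gives $(i_m,\,ga_m,\,\lambda_m)\cdot s_h = (i_m,\,gha_m,\,\lambda_m)$; since the product stays in $D_m$ it coincides with multiplication in $\ig{\cE}$, and by associativity one obtains the key identity $\mathbf{x}_g \cdot s_h = \mathbf{x}_{gh}$ for every $g\in Q$.

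The remainder is then formal. Since $\mathbf{x} s_h = \mathbf{x}_h\in H_\mathbf{x}$, Green's Lemma places $s_h\in\mathrm{Stab}(H_\mathbf{x})$, and the computation above identifies the permutation $\rho_{s_h}\in\Gamma_{H_\mathbf{x}}$ with $\mathbf{x}_g\mapsto\mathbf{x}_{gh}$. Setting $\beta(h)=\rho_{s_h}$, the regularity of the action of $\Gamma_{H_\mathbf{x}}$ on $H_\mathbf{x}$ makes $\beta$ well-defined and surjective: any $\rho\in\Gamma_{H_\mathbf{x}}$ satisfies $\rho(\mathbf{x})=\mathbf{x}_h$ for some $h\in Q$, and any two elements of $\mathrm{Stab}(H_\mathbf{x})$ that agree at $\mathbf{x}$ must coincide in $\Gamma_{H_\mathbf{x}}$. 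Moreover $\rho_{s_{h_1 h_2}}$ and $\rho_{s_{h_1}}\rho_{s_{h_2}}$ both send $\mathbf{x}_g$ to $\mathbf{x}_{g h_1 h_2}$, so $\beta$ is a homomorphism, and $\beta(h)$ is trivial precisely when $\mathbf{x}_h = \mathbf{x}$, which by Proposition~\ref{pro:theta-subg}(2) means $h\in N$. The first isomorphism theorem then delivers $Q/N \cong \Gamma_{H_\mathbf{x}}$.

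The main obstacle is really the second step: transitioning cleanly from Rees-matrix bookkeeping to the actual multiplication in $\ig{\cE}$. One must carefully verify that when $p_{\lambda_m i'}\neq 0$ the product $(i_m,ga_m,\lambda_m)\cdot s_h$ stays in $D_m$, so that the principal-factor formula genuinely represents the product in $\ig{\cE}$, and that prefixing with $(i_1,a_1,\lambda_1)\dots(i_{m-1},a_{m-1},\lambda_{m-1})$ produces a legitimate Rees-triple form of $\mathbf{x}_{gh}$. Once this book-keeping is settled by invoking the standard identification of the principal factor of $D_m$ with its Rees matrix form, the rest of the theorem is an almost immediate consequence of the identity $\mathbf{x}_g s_h = \mathbf{x}_{gh}$.
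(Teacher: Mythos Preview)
Your proof is correct and reaches the same conclusion, but the route differs from the paper's in a meaningful way. The paper first proves a Claim (using Proposition~\ref{pro:H}, which goes back to words and seeds) that every element of $\mathrm{Stab}(H_\mathbf{x})$ induces the same right translation as some element of the group $\H$-class $H_{(i_m,a_m,\lambda_m)}$; this reduces $\Gamma_{H_\mathbf{x}}$ to a quotient $G/N$ of a subgroup of that $\H$-class, and the remainder of the paper's argument is the computation identifying $G$ and $N$ via $\theta$. You instead bypass the Claim entirely: you parametrise $H_\mathbf{x}$ by $Q$ up front (via Theorem~\ref{thm:Green} and Corollary~\ref{cor:Green}), construct explicit stabilising elements $s_h\in D_m$ realising the right-regular action $\mathbf{x}_g\mapsto\mathbf{x}_{gh}$, and then invoke the \emph{regularity} of the \Sch action to get surjectivity of $\beta$ without ever analysing an arbitrary stabiliser.

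What your approach buys is a shorter and more self-contained argument: you never need to drop back to words in $E^+$ or to Proposition~\ref{pro:H}, and you are careful to pick $i'\in I_m$ with $p_{\lambda_m i'}\neq 0$ (which always exists by the Rees--Su\v{s}kevi\v{c} structure), whereas the paper's proof tacitly uses $H_{(i_m,a_m,\lambda_m)}$ as a group $\H$-class and multiplies by $p_{\lambda_m i_m}$, which presupposes $p_{\lambda_m i_m}\neq 0$. What the paper's approach buys is a slightly stronger intermediate statement (the Claim), namely that \emph{every} element of $\mathrm{Stab}(H_\mathbf{x})$ is $\sigma_{H_\mathbf{x}}$-equivalent to one lying in a fixed $\H$-class of $D_m$; your argument does not establish this, though it is not needed for the theorem itself. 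The only point where you should be a touch more explicit is the normality step in the kernel computation: you conclude $\ker\beta=N$ from ``$\mathbf{x}_h=\mathbf{x}\Leftrightarrow h\in N$'' via regularity of the action, which is fine, but an unwary reader might expect you to check $\mathbf{x}_{gh}=\mathbf{x}_g$ for all $g$, and that step silently uses $N\trianglelefteq Q$ from Proposition~\ref{pro:theta-subg}(4).
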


\begin{proof}
We begin with the following key observation.
\begin{cla}
For any $\mathbf{s}\in\mathrm{Stab}(H_\mathbf{x})$ there exists an element $\mathbf{h}\in H_{(i_m,a_m,\lambda_m)}$ (where the latter is a group
$\H$-class contained in $D_m$, isomorphic to $G_m$) such that $\rho_\mathbf{s} = \rho_\mathbf{h}$, i.e.\ which induces the same right translation 
on $H_\mathbf{x}$ as $\mathbf{s}$. 
\end{cla}
\begin{proof}[Proof of Claim]
Assume that $\mathbf{s}=\ol{s}$ for a word $s\in E^*$. If $w$ is a word representing $\mathbf{x}$ (and giving rise to its decomposition as stated in 
Proposition \ref{pro:theta-subg}, induced by a minimal r-factorisation $w=p_1\dots p_m$) then $\ol{ws}\;\H\;\ol{w}$. In particular, $\ol{ws}\;\L\;\ol{w}$; 
consequently, there is a word $x\in E^*$ such that $\ol{xws}=\ol{w}$. By Proposition \ref{pro:H} this implies that $\ol{p_ms}\;\H\;\ol{p_m}$, that is,
\begin{align*}
(i_m,b,\lambda_m)\mathbf{s} &= (i_m,b,\lambda_m)(i_m,a_m,\lambda_m)^{-1}(i_m,a_m,\lambda_m)\mathbf{s}\\
&= (i_m,b,\lambda_m)(i_m,a_m,\lambda_m)^{-1}\mathbf{h}',
\end{align*}
where the inverse is taken within the group $H_{(i_m,a_m,\lambda_m)}$. Upon defining $\mathbf{h}=(i_m,a_m,\lambda_m)^{-1}\mathbf{h}'\in H_{(i_m,a_m,\lambda_m)}$, 
we see that multiplication from the right by $\mathbf{s}$ and $\mathbf{h}$ acts the same on the following subset of $\ig{\cE}$:
$$
H'=\{(i_1,a_1,\lambda_1)\dots(i_m,b,\lambda_m):\ b\in G_m\}.
$$
However, by Theorem \ref{thm:Green}(3) we have that $H_\mathbf{x}\subseteq H'$ (in fact, $H_\mathbf{x}$ is precisely the collection of all elements of $H'$ that
can be written in the form $(i_1,c,\lambda_1)\dots(i_m,a_m,\lambda_m)$ for some $c\in G_1$), so $\rho_\mathbf{s} = \rho_\mathbf{h}$, as wanted.
\end{proof}

Due to this claim, we now know that the (monoid) quotient $\mathrm{Stab}(H_\mathbf{x})/\sigma_{H_\mathbf{x}}$ is isomorphic to the quotient of the subgroup
$G=\mathrm{Stab}(H_\mathbf{x}) \cap H_{(i_m,a_m,\lambda_m)}$ of $H_{(i_m,a_m,\lambda_m)}$ by the corresponding restriction of $\sigma_{H_\mathbf{x}}$, i.e.\
by the normal subgroup $N$ consisting of all elements of $G$ inducing the identity map as their right translations of $H_\mathbf{x}$.

Now, $(i_m,g,\lambda_m)\in G$ if and only if for any 
\begin{equation}\label{eq:H}
(i_1,a_1,\lambda_1)\dots(i_m,b,\lambda_m)=(i_1,c,\lambda_1)\dots(i_m,a_m,\lambda_m)\in H_\mathbf{x}
\end{equation}
we have that 
$$
(i_1,a_1,\lambda_1)\dots(i_m,b,\lambda_m)(i_m,g,\lambda_m)=(i_1,a_1,\lambda_1)\dots(i_m,bp_{\lambda_mi_m}g,\lambda_m)
$$
(where $p_{\lambda_mi_m}$ is the corresponding element of the sandwich matrix for $D_m$) can be also expressed in the form  $(i_1,d,\lambda_1)\dots(i_m,a_m,\lambda_m)$ 
for some $d\in G_1$. By Theorem \ref{thm:ig-wp-re} and remarks analogous to those made in the proofs of (2) and (3) of Proposition \ref{pro:theta-subg}, this is 
the same as saying that 
$$
bp_{\lambda_mi_m}ga_m^{-1} \in (\{d^{-1}a_1\},\mathbf{x},\mathbf{x})\theta
$$
for some $d\in G_1$. Since $d^{-1}a_1$ traverses the entire $G_1$ as $d$ traverses $G_1$, a further equivalent statement is that 
$$
bp_{\lambda_mi_m}ga_m^{-1} \in (G_1,\mathbf{x},\mathbf{x})\theta.
$$
However, from \eqref{eq:H} and Theorem \ref{thm:ig-wp-re} we must have
$ba_m^{-1}\in (\{a_1^{-1}c\},\mathbf{x},\mathbf{x})\theta$ for some $c\in G_1$, that is,
$$
ba_m^{-1}\in (G_1,\mathbf{x},\mathbf{x})\theta.
$$
As $bp_{\lambda_mi_m}ga_m^{-1}=(ba_m^{-1})(a_m(p_{\lambda_mi_m}g)a_m^{-1})$, we deduce that $(i_m,g,\lambda_m)\in G$ if and only if 
$$
p_{\lambda_mi_m}g \in a_m^{-1}[(G_1,\mathbf{x},\mathbf{x})\theta]a_m.
$$
It is a straightforward exercise to show that the map 
$$\psi:(i_m,g,\lambda_m)\mapsto p_{\lambda_mi_m}g$$
is a group isomorphism $H_{(i_m,a_m,\lambda_m)} \to G_m$. Hence, as $g$ traverses $p^{-1}_{\lambda_mi_m}K$, where $K=a_m^{-1}[(G_1,\mathbf{x},\mathbf{x})\theta]a_m$, the images
of $(i_m,g,\lambda_m)$ traverse the subgroup $K\leq G_m$. Thus $G\psi=K$.

In a similar fashion, consider the assertion $(i_m,g,\lambda_m)\in N$. This is true if and only if for any $b\in G_1$ such that $ba_m^{-1}\in (G_1,\mathbf{x},\mathbf{x})\theta$
we have 
$$
(i_1,a_1,\lambda_1)\dots(i_m,bp_{\lambda_mi_m}g,\lambda_m) = (i_1,a_1,\lambda_1)\dots(i_m,b,\lambda_m).
$$
By using Theorem \ref{thm:ig-wp-re} once again, we derive that another equivalent of this assertion is 
$$
bp_{\lambda_mi_m}gb^{-1}\in (E,\mathbf{x},\mathbf{x})\theta.
$$
However, as $bp_{\lambda_mi_m}gb^{-1} = (ba_m^{-1})(a_m(p_{\lambda_mi_m}g)a_m^{-1})(ba_m^{-1})^{-1}$ and  since by Proposition \ref{pro:theta-subg} we have
$(E,\mathbf{x},\mathbf{x})\theta\lnor (G_1,\mathbf{x},\mathbf{x})\theta$, this can be further simplified to
$$
p_{\lambda_mi_m}g \in a_m^{-1}[(E,\mathbf{x},\mathbf{x})\theta]a_m.
$$
By considering the isomorphism $\psi$ once more, we get $N\psi = L = a_m^{-1}[(E,\mathbf{x},\mathbf{x})\theta]a_m$. Finally,
$$
\Gamma_{H_\mathbf{x}} \cong G/N \cong K/L \cong (G_1,\mathbf{x},\mathbf{x})\theta / (E,\mathbf{x},\mathbf{x})\theta, 
$$
as required.
\end{proof}

Recall that it is customary in group theory to call a quotient of a subgroup of a group $G$ a \emph{divisor} of $G$.

\begin{cor}
Any \Sch group of $\ig{\cE}$ is a divisor of some maximal subgroup of $\ig{\cE}$.
\end{cor}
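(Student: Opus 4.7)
The plan is to read off the conclusion directly from Theorem \ref{thm:Sch} together with Proposition \ref{pro:theta-subg}, handling the regular and non-regular $\D$-classes separately. Since the \Sch group is an invariant of a $\D$-class (up to isomorphism), it suffices to check the claim once per $\D$-class.

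First, I would dispose of the easy case: if $D$ is a regular $\D$-class of $\ig{\cE}$ and $H\subseteq D$ is an $\H$-class, then for any idempotent $e\in D$ the group $H_e$ is a maximal subgroup of $\ig{\cE}$, and the general theory of \Sch groups recalled in the paragraph preceding Proposition \ref{pro:theta-subg} gives $\Gamma_H\cong\Gamma_{H_e}\cong H_e$. So $\Gamma_H$ is itself a maximal subgroup of $\ig{\cE}$, hence trivially a divisor of one.

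For the non-regular case, I would pick any $\mathbf{x}$ in the given $\D$-class and write it, via a minimal r-factorisation of a representing word, in the form
$$
\mathbf{x}=(i_1,a_1,\lambda_1)\dots(i_m,a_m,\lambda_m)
$$
with $\D$-fingerprint $(D_1,\dots,D_m)$ (here $m\ge 2$ since the $\D$-class is non-regular). Theorem \ref{thm:Sch} then identifies
$$
\Gamma_{H_\mathbf{x}} \cong (G_1,\mathbf{x},\mathbf{x})\theta / (E,\mathbf{x},\mathbf{x})\theta.
$$
Proposition \ref{pro:theta-subg}(1) (applied with $K=G_1$) says that $(G_1,\mathbf{x},\mathbf{x})\theta$ is a subgroup of $G_m$, while Proposition \ref{pro:theta-subg}(4) says that $(E,\mathbf{x},\mathbf{x})\theta$ is a normal subgroup of $(G_1,\mathbf{x},\mathbf{x})\theta$. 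Therefore $\Gamma_{H_\mathbf{x}}$ is a quotient of a subgroup of $G_m$, the maximal subgroup of the regular $\D$-class $D_m$ of $\ig{\cE}$, which is exactly the statement that $\Gamma_{H_\mathbf{x}}$ divides $G_m$.

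There is essentially no obstacle here: all the substantive work has been front-loaded into Theorem \ref{thm:Sch} and Proposition \ref{pro:theta-subg}. The only thing to be mindful of is that $G_m$ denotes the maximal subgroup of the last $\D$-class in the $\D$-fingerprint, which is a genuine maximal subgroup of $\ig{\cE}$; a symmetric argument using a left-handed version of $\theta$ (or simply left-right duality) would show that $\Gamma_{H_\mathbf{x}}$ is equally a divisor of $G_1$, the maximal subgroup of the first $\D$-class of the fingerprint, but one such witness is all the corollary requires.
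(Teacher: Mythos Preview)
Your proposal is correct and matches the paper's approach: the paper states the corollary without proof, treating it as immediate from Theorem~\ref{thm:Sch} (and the sentence following the corollary makes explicit that $\Gamma_{H_\mathbf{x}}$ divides $G_m$). Your explicit split into the regular and non-regular cases is a harmless elaboration, and your invocation of Proposition~\ref{pro:theta-subg}(1),(4) to justify that $(G_1,\mathbf{x},\mathbf{x})\theta/(E,\mathbf{x},\mathbf{x})\theta$ is indeed a divisor of $G_m$ is exactly the intended reading.
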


In particular, with the notation as in Theorem \ref{thm:Sch}, $\Gamma_{H_\mathbf{x}}$ is a divisor of $G_m$.

Now we turn towards the computational aspects of the considered \Sch groups. Just as Proposition \ref{pro:theta-subg}(1) above
is the counterpart of Theorem \ref{thm:theta}(1), we need analogues of parts (2) and (3) of that theorem, dealing with the issue
of finite generation and effective computability.

\begin{thm}
Let $\cE$ be a finite biordered set and assume that $\mathbf{x}\in\ig{\cE}$ has $\D$-fingerprint
$(D_1,\dots,D_m)$. Furthermore, let $\mathcal{T}_k$ be the class of \emph{tall} subgroups of $G_k\times G_{k+1}^\partial$, $1\leq k<m$.
\begin{itemize}
\item[(1)] If for all $1\leq k<m$ the group $G_k\times G_{k+1}^\partial$ has the $(W_{(\lambda_k,i_{k+1})},\mathcal{T}_k)$-relative Howson 
           property and $K\leq G_1$ is finitely generated then $(K,\mathbf{x},\mathbf{x})\theta$ is a finitely generated subgroup of $G_m$.
					 In particular, the \Sch group $\Gamma_{H_\mathbf{x}}$ is finitely generated.
\item[(2)] If for all $1\leq k<m$ the property $\mathsf{eRHP}(G_k\times G_{k+1}^\partial, W_{(\lambda_k,i_{k+1})},\mathcal{T}_k)$ holds then 
					 $(K,\mathbf{x},\mathbf{x})\theta$ is effectively computable, i.e., there is an algorithm which, given a finite generating set of 
					 $K$, computes a finite generating set for $L=(K,\mathbf{x},\mathbf{x})\theta$.
\end{itemize}
\end{thm}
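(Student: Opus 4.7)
The plan is to adapt the argument used for Theorem \ref{thm:theta} to the symmetric situation $\mathbf{u}=\mathbf{v}=\mathbf{x}$. The essential simplification is already provided by Proposition \ref{pro:theta-subg}(1): since the right-hand parameters of $\theta$ coincide with its left-hand parameters, the coset representatives $g_{k,k+1}$ may all be chosen to be $(1,1)$, we have $b_k=a_k$ throughout, and each intermediate set $A_k$ produced during the iterative definition of $(K,\mathbf{x},\mathbf{x})\theta$ is in fact a \emph{subgroup} of $G_k$ rather than merely a coset. Thus for part (1) it suffices to upgrade this to a statement about \emph{finitely generated} subgroups, and for part (2) to an effective computation of generating sets.

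For part (1), I would run an induction on $k$ to show that each $A_k$ is a finitely generated subgroup of $G_k$, with base case $A_1=K$ given. For the inductive step, conjugation is a group isomorphism, so $a_k^{-1}A_ka_k$ remains finitely generated; combining a finite generating set of this conjugate with a finite generating set of $G_{k+1}$ supplied by \cite[Theorem 3.10]{DGR} yields a finite generating set of the tall subgroup $a_k^{-1}A_ka_k\times G_{k+1}\in\mathcal{T}_k$. The relative Howson hypothesis applied to this tall subgroup and the vertex group $W_{(\lambda_k,i_{k+1})}$ guarantees that their intersection $B_{k,k+1}$ is finitely generated, and finite generation is preserved under the second-coordinate projection that produces $A_{k+1}$. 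For the Schützenberger consequence, note that $G_1$ itself is finitely generated by \cite[Theorem 3.10]{DGR}; applying the above to $K=G_1$ gives finite generation of $(G_1,\mathbf{x},\mathbf{x})\theta$, and finite generation passes to quotients, so by Theorem \ref{thm:Sch} the group $\Gamma_{H_\mathbf{x}}\cong (G_1,\mathbf{x},\mathbf{x})\theta/(E,\mathbf{x},\mathbf{x})\theta$ is finitely generated.

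Part (2) is then the effective version of the same induction. Starting from a finite generating set of $A_k$, conjugating each generator by $a_k$ gives one for $a_k^{-1}A_ka_k$; combined with the pre-computed finite generating set of $G_{k+1}$ from \cite[Theorem 3.10]{DGR}, this presents $a_k^{-1}A_ka_k\times G_{k+1}$ explicitly. The vertex group $W_{(\lambda_k,i_{k+1})}$ itself comes equipped with an algorithmically computable finite generating set by Theorem \ref{thm:gain-gen}, so the hypothesis $\mathsf{eRHP}(G_k\times G_{k+1}^\partial,W_{(\lambda_k,i_{k+1})},\mathcal{T}_k)$ produces a finite generating set for $B_{k,k+1}$, and taking second coordinates yields one for $A_{k+1}$. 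Iterating from $k=1$ to $k=m-1$ delivers $L=A_m$.

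I do not foresee a substantive obstacle: once Proposition \ref{pro:theta-subg}(1) has cleared away the coset bookkeeping, both parts are straightforward adaptations of the corresponding arguments in Theorem \ref{thm:theta}(2)--(3). The only small item requiring attention is the interaction between the iterative conjugation by $a_k$ and the propagation of generating sets, but this amounts to a routine termwise conjugation and does not require any new ideas.
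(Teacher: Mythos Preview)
Your proposal is correct and follows essentially the same approach as the paper: both argue by \emph{mutatis mutandis} from Theorem \ref{thm:theta}(2)--(3), invoking Proposition \ref{pro:theta-subg}(1) to replace cosets by subgroups and then running the obvious induction on $k$ through the iterative definition of $\theta$. Your write-up is in fact slightly more detailed than the paper's (which is terse), explicitly noting the role of Theorem \ref{thm:gain-gen} and the conjugation step; the only cosmetic discrepancy is that for $k=1$ the first factor is $A_1$ itself rather than a conjugate, but this does not affect the argument.
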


\begin{proof}
This follows by \emph{mutatis mutandis} the proofs of the parts (2) and (3) of Theorem \ref{thm:theta}, bearing in mind the simplified process
of computing $(K,\mathbf{x},\mathbf{x})\theta$ for a subgroup $K$ of $G_1$ sketched in Proposition \ref{pro:theta-subg}(1). For (1), it suffices
to see that the given relative Howson properties ensure that if $A_1=K$ is a finitely generated subgroup of $G_1$, so are all of $A_2,\dots,A_m$
(within $G_2,\dots,G_m$, respectively. Indeed, if $A_k\leq G_k$ is finitely generated, then $B_{1,2}=(A_1\times G_2^\partial)\cap W_{(\lambda_1,i_2)}$
if $k=2$, and otherwise $B_{k,k+1}=((a_k^{-1}A_ka_k)\times G_{k+1}^\partial)\cap W_{(\lambda_k,i_{k+1})}$; in any case, the supplied assumptions
imply that $B_{k,k+1}$ is a finitely generated subgroup of $G_k\times G_{k+1}^\partial$ and thus its second projection $A_{k+1}$ is finitely generated
in $G_{k+1}$, too. 

For (2), the effective versions of the relative Howson properties yield that at each stage of the described process where an intersection of two finitely
generated subgroups (with given finite generating sets) is taken, there is an algorithm computing a finite generating set for that intersection. 
Composing these algorithms, this means that there is an algorithm which, presented with a finite generating set for $K\leq G_1$, computes a finite
generating set for $(K,\mathbf{x},\mathbf{x})\theta$.
\end{proof}

\begin{cor}
Let $\cE$ be a finite biordered sets such that for any two regular $\D$-classes $D_1,D_2$ of $\ig{\cE}$ (with maximal subgroups $G_1,G_2$, 
respectively), each vertex group $W=W_{(\lambda,i)}$ of $\mathcal{A}(D_1,D_2)$ such that there exists a non-regular element of the form 
$(i',g_1,\lambda)(i,g_2,\lambda')\in\ig{\cE}$ for some $i'\in I_1$, $g_1\in G_1$, $g_2\in G_2$, $\lambda'\in\Lambda_2$, and the class 
$\mathcal{T}$ of all tall subgroups of $G_1\times G_2^\partial$, we have that the property $\mathsf{eRHP}(G_1\times G_2^\partial,W,\mathcal{T})$ 
holds. Then the \Sch groups of $\ig{\cE}$ are effectively computable, in the sense that there is an algorithm which, presented with an element 
$\mathbf{x}\in\ig{\cE}$, identifies a maximal subgroup $G$ of $\ig{\cE}$ and outputs finite generating sets of its subgroups $L\lnor K\leq G$
such that $\Gamma_{H_\mathbf{x}}\cong K/L$.
\end{cor}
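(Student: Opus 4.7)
The plan is to assemble Theorem \ref{thm:Sch} with the immediately preceding (effective) theorem governing the computability of $(K, \mathbf{x}, \mathbf{x})\theta$. Given an input element $\mathbf{x} \in \ig{\cE}$ supplied as a word over $E$, the first step is to use the rewriting algorithm of \cite[Section 4]{YDG2} (Proposition~4.1 and Theorem~4.3 there), in conjunction with Theorem \ref{thm:DGR}, to produce effectively the Rees-matrix representation $\mathbf{x} = (i_1, a_1, \lambda_1) \cdots (i_m, a_m, \lambda_m)$ together with its $\D$-fingerprint $(D_1, \ldots, D_m)$. Finite generating sets (indeed, finite presentations) for the maximal subgroups $G_1, \ldots, G_m$ are available via \cite[Theorem 3.10]{DGR}, and the target maximal subgroup identified by the corollary is $G = G_m$.

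Before invoking the preceding theorem, one must verify that the effective relative Howson hypothesis supplied by the corollary covers every vertex group actually used in the iterative definition of $(\cdot, \mathbf{x}, \mathbf{x})\theta$. Those vertex groups are precisely $W_{(\lambda_k, i_{k+1})}$ in the contact automata $\mathcal{A}(D_k, D_{k+1})$ for $1 \le k < m$. Because the given factorisation arose from a \emph{minimal} r-factorisation, each product $(i_k, a_k, \lambda_k)(i_{k+1}, a_{k+1}, \lambda_{k+1})$ is non-regular in $\ig{\cE}$, which is exactly the ``witness'' clause imposed by the corollary; hence $\mathsf{eRHP}(G_k \times G_{k+1}^\partial, W_{(\lambda_k, i_{k+1})}, \mathcal{T}_k)$ holds for each relevant $k$.

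I would then apply part~(2) of the preceding theorem twice. First, with $K = G_1$ (using its finite generating set from \cite[Theorem 3.10]{DGR}), this yields an effectively computable finite generating set for $K := (G_1, \mathbf{x}, \mathbf{x})\theta \le G_m$. Second, with $K = \{1\}$, the same algorithm produces a finite generating set for $L := (E, \mathbf{x}, \mathbf{x})\theta \le G_m$; the identification $(E, \mathbf{x}, \mathbf{x})\theta = (\{1\}, \mathbf{x}, \mathbf{x})\theta$ is read off the constraint system in Proposition \ref{pro:theta-subg}(2), whose first equation $(1, x_2) \in W_{(\lambda_1, i_2)}$ encodes precisely the starting set $A_1 = \{1\}$. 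Normality $L \lnor K$ is then Proposition \ref{pro:theta-subg}(4), and the isomorphism $\Gamma_{H_\mathbf{x}} \cong K / L$ is the content of Theorem \ref{thm:Sch}.

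No serious obstacle arises here: the argument is essentially an assembly of already-available ingredients. The only subtle point worth highlighting is the match between the ``non-regular-witness'' clause in the statement of the corollary and the non-regularity of consecutive Rees-triple factors that is automatic from minimality of the r-factorisation; this is what ensures that the hypothesised eRHP is applicable at every step of the iterative computation of $\theta$.
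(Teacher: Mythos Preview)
Your proposal is correct and matches the intended argument: the paper states this corollary without proof, treating it as an immediate consequence of Theorem~\ref{thm:Sch} combined with part~(2) of the preceding computability theorem, and you have filled in exactly that assembly (including the observation that minimality of the r-factorisation guarantees non-regularity of consecutive factors, so the eRHP hypothesis applies at each step). Your remark that $(E,\mathbf{x},\mathbf{x})\theta$ is to be read as $(\{1\},\mathbf{x},\mathbf{x})\theta$, justified via the first constraint in Proposition~\ref{pro:theta-subg}(2), is also the correct interpretation of the paper's notation.
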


\begin{rmk}\label{rmk:almost-finite-fp}
As already seen in Remark \ref{rmk:almost-finite}, the assumptions of the previous corollary are met when all non-maximal regular $\D$-classes
of $\ig{\cE}$ have finite maximal subgroups. Then $\Gamma_{H_\mathbf{x}}\cong K/L$ is either finite (for example, when $G$ is finite), or, otherwise,
when $G$ is free, $K$ is a finitely generated subgroup of $G$, so it is also free (and finitely generated). Since $L$ is then a finitely
generated normal subgroup of $K$, it follows that for the considered finite biorders $\cE$, each \Sch group of $\ig{\cE}$ must be finitely presented.
\end{rmk}

With our third objective in mind, of determining the ``shape'' of arbitrary $\D$-classes in $\ig{\cE}$, but also with the aim of elucidating
the left-right duality aspect of the results proved thus far, we now introduce the function $\ol\theta$ mapping subsets of $G_m$ to subsets of $G_1$ 
that is (in a definite sense that will become clear shortly) dual to $\theta$. So, once again, let $\mathbf{u}=(i_1,a_1,\lambda_1)\dots
(i_m,a_m,\lambda_m)$ and $\mathbf{v}=(j_1,b_1,\mu_1)\dots(j_m,b_m,\mu_m)$ be two elements of $\ig{\cE}$, both with $\D$-fingerprint 
$(D_1,\dots,D_m)$, and let $B\subseteq G_m$. We construct a sequence of sets $B_k\subseteq G_{m+1-k}$, $1\leq k\leq m$, starting from $B_1=B$. We let 
$$
C_{m-1,m} = (G_{m-1}\times B)\cap W_{(\lambda_{m-1},i_m)}h_{m-1,m}
$$
where $h_{m-1,m}$ is the label of some fixed walk from $(\lambda_{m-1},i_m)$ to $(\mu_{m-1},j_m)$ in $\mathcal{A}(D_{m-1},D_m)$ (if such walk exists,
otherwise $C_{m-1,m}=\es$). Then we set
$B_2=C_{m-1,m}\pi_1$ where $\pi_1$ is the first projection map. Assuming that $B_k$ has been constructed for some $1<k<m$, let
$$
C_{m-k,m+1-k} = (G_{m-k}\times b_{m+1-k}^{-1}B_k^{-1}a_{m+1-k}) \cap W_{(\lambda_{m-k},i_{m+1-k})}h_{m-k,m+1-k}
$$
where $h_{m-k,m+1-k}$ is the label of some fixed walk between $(\lambda_{m-k},i_{m+1-k})$ and $(\mu_{m-k},j_{m+1-k})$ in the automaton/graph 
$\mathcal{A}(D_{m-k},D_{m+1-k})$ (if any, otherwise we have $C_{m-k,m+1-k}=\es$). Now we define $B_{k+1}=C_{m-k,m+1-k}\pi_1$. Finally, the value of 
$\ol\theta(\mathbf{u},\mathbf{v},B)$ is the set $B_m\subseteq G_1$ obtained at the end of this process.

Now most of the results stated and proved in the last two sections of this paper can be rephrased in terms of the function $\ol\theta$ and
proved by left-right dual arguments to the ones already presented. Here we provide, without proof, a summary of the most relevant of these dual results
(here we use, without any particular reference, the notation from the pertaining original statements):
\begin{itemize}
\item For any right coset $Lx$ of a subgroup $L\leq G_m$, $\ol\theta(\mathbf{u},\mathbf{v},Lx)$ is either empty, or a left coset of a subgroup of $G_1$.
\item The duals of statements (2) and (3) of Theorem \ref{thm:theta} hold under the (effective) relative Howson property assumptions; however, the class
$\mathcal{T}_k$ of tall subgroups of $G_k\times G_{k+1}^\partial$ should be replaced by the class $\mathcal{W}_k$ of \emph{wide} subgroups of the form 
$G_k\times H$, where $H$ is a finitely generated subgroup of $G_{k+1}^\partial$ (and thus of $G_{k+1}$).
\item $\mathbf{u}=\mathbf{v}$ holds if and only if $a_1^{-1}b_1\in\ol\theta(\mathbf{u},\mathbf{v},\{b_ma_m^{-1}\})$.
\item $\mathbf{x}\;\R\;\mathbf{y}$ if and only if $i_1=j_1$ and $a_1^{-1}b_1\in\ol\theta(\mathbf{x},\mathbf{y},G_m)$.
\item $\mathbf{x}\;\L\;\mathbf{y}$ if and only if $\lambda_m=\mu_m$ and $\ol\theta(\mathbf{x},\mathbf{y},\{b_ma_m^{-1}\})\neq\es$.
\item $\mathbf{x}\;\D\;\mathbf{y}$ if and only if $\ol\theta(\mathbf{x},\mathbf{y},G_m)\neq\es$.
\item For any subgroup $K\leq G_m$, $\ol\theta(\mathbf{x},\mathbf{x},K)$ is a subgroup of $G_1$.
\item $\ol\theta(\mathbf{x},\mathbf{x},E)\lnor \ol\theta(\mathbf{x},\mathbf{x},G_m)$.
\item $\Gamma_{H_\mathbf{x}}\cong \ol\theta(\mathbf{x},\mathbf{x},G_m)/\ol\theta(\mathbf{x},\mathbf{x},E)$, and thus the \Sch group $\Gamma_{H_\mathbf{x}}$
is also a divisor of $G_1$.
\end{itemize}

Here is the final main result of this section.

\begin{thm}\label{thm:Dshape}
Let $\cE$ be a finite biordered set, and let 
$$\mathbf{x}=(i_1,a_1,\lambda_1)\dots(i_m,a_m,\lambda_m)\in\ig{\cE}$$ be of $\D$-fingerprint $(D_1,\dots,D_m)$. 
\begin{itemize}
\item[(1)] The number of $\R$-classes contained in $D_\mathbf{x}$ is 
$$
|I_1| \cdot \left(G_1:\ol\theta(\mathbf{x},\mathbf{x},G_m)\right).
$$
\item[(2)] The number of $\L$-classes contained in $D_\mathbf{x}$ is 
$$
|\Lambda_m| \cdot \left(G_m:(G_1,\mathbf{x},\mathbf{x})\theta\right).
$$
\item[(3)] 
\begin{align*}
|H_\mathbf{x}| &= \left((G_1,\mathbf{x},\mathbf{x})\theta:(E,\mathbf{x},\mathbf{x})\theta\right)=
                  \left(\ol\theta(\mathbf{x},\mathbf{x},G_m):\ol\theta(\mathbf{x},\mathbf{x},E)\right),\\
|R_\mathbf{x}| &= |\Lambda_m| \cdot \left(G_m:(E,\mathbf{x},\mathbf{x})\theta\right),\\
|L_\mathbf{x}| &= |I_1| \cdot \left(G_1:\ol\theta(\mathbf{x},\mathbf{x},E)\right),\\
|D_\mathbf{x}| &= |I_1| \cdot |\Lambda_m| \cdot \left(G_1:\ol\theta(\mathbf{x},\mathbf{x},E)\right)\cdot \left(G_m:(G_1,\mathbf{x},\mathbf{x})\theta\right)\\ 
               &= |I_1| \cdot |\Lambda_m| \cdot \left(G_1:\ol\theta(\mathbf{x},\mathbf{x},G_m)\right)\cdot \left(G_m:(E,\mathbf{x},\mathbf{x})\theta\right)
\end{align*}
In particular, if both groups $G_1,G_m$ are finite then the $\D$-class $D_\mathbf{x}$ is finite as well.
\end{itemize}
\end{thm}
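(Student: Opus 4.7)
The plan is to reduce everything in (3) to three basic counts—$|H_\mathbf{x}|$ and the two quantities computed in (1) and (2)—and then invoke the standard egg-box arithmetic within a $\D$-class. Recall that all $\H$-classes of $D_\mathbf{x}$ are equinumerous (the Schützenberger group acts regularly) and that each $\R$-class meets each $\L$-class in exactly one $\H$-class, so
$$
|R_\mathbf{x}|=|H_\mathbf{x}|\cdot|D_\mathbf{x}/\L|,\quad |L_\mathbf{x}|=|H_\mathbf{x}|\cdot|D_\mathbf{x}/\R|,\quad |D_\mathbf{x}|=|H_\mathbf{x}|\cdot|D_\mathbf{x}/\R|\cdot|D_\mathbf{x}/\L|.
$$
Once (1), (2) and both expressions for $|H_\mathbf{x}|$ are in hand, all formulas of (3) follow by substitution; the two alternative expressions for $|D_\mathbf{x}|$ simply reflect the choice between using the $\theta$- or $\ol\theta$-formula for $|H_\mathbf{x}|$.

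The equality $|H_\mathbf{x}|=((G_1,\mathbf{x},\mathbf{x})\theta:(E,\mathbf{x},\mathbf{x})\theta)$ is immediate from Theorem \ref{thm:Sch} together with the well-known fact $|\Gamma_H|=|H|$ (regularity of the Schützenberger action), and the dual expression $(\ol\theta(\mathbf{x},\mathbf{x},G_m):\ol\theta(\mathbf{x},\mathbf{x},E))$ comes from the correspondingly dualised isomorphism listed among the bullet points just before the statement of the theorem.

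To prove (1), I would use Theorem \ref{thm:Green}(2) to parameterise each element of $L_\mathbf{x}$ as $\mathbf{z}_{j,c}=(j,c,\lambda_1)(i_2,a_2,\lambda_2)\cdots(i_m,a_m,\lambda_m)$ with $(j,c)\in I_1\times G_1$, and observe that $\R$-classes of $D_\mathbf{x}$ correspond bijectively to $\H$-classes of $L_\mathbf{x}$. Applying the $\ol\theta$-dual of Corollary \ref{cor:Green}(1) to $\mathbf{z}_{j_1,c_1},\mathbf{z}_{j_2,c_2}$ shows they are $\R$-related iff $j_1=j_2$ and $c_1^{-1}c_2\in\ol\theta(\mathbf{z}_{j_1,c_1},\mathbf{z}_{j_2,c_2},G_m)$. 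The key reduction is the shift-invariance assertion
$$
\ol\theta(\mathbf{z}_{j_1,c_1},\mathbf{z}_{j_2,c_2},G_m)=\ol\theta(\mathbf{x},\mathbf{x},G_m),
$$
which follows by direct inspection of the iterative definition of $\ol\theta$: the leftmost group entries $c_1,c_2$ never enter the computation (the index $m+1-k$ in the rule for $C_{m-k,m+1-k}$ ranges only over $\{2,\dots,m\}$), and since $\mathbf{z}_{j_1,c_1},\mathbf{z}_{j_2,c_2}$ share with $\mathbf{x}$ all structural data and middle group factors $a_2,\dots,a_m$, each coset representative $h_{k,k+1}$ may be taken to be $(1,1)$ (all relevant walks are closed at a single vertex) and every conjugation factor reduces to $a_k^{-1}\cdot a_k$, exactly as in the $\mathbf{u}=\mathbf{v}=\mathbf{x}$ case. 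This is directly analogous to the observations $(A,\mathbf{x},\mathbf{x}_g)\theta=(A,\mathbf{x},\mathbf{x}_{y_1,g})\theta=(A,\mathbf{x},\mathbf{x})\theta$ made in the proof of Proposition \ref{pro:theta-subg}. Consequently the equivalence partitions $I_1\times G_1$ into $|I_1|\cdot(G_1:\ol\theta(\mathbf{x},\mathbf{x},G_m))$ classes, establishing (1). Formula (2) is proved by the left--right dual argument, parameterising $R_\mathbf{x}$ by pairs in $G_m\times\Lambda_m$ via Theorem \ref{thm:Green}(1) and invoking Corollary \ref{cor:Green}(2) in its original $\theta$-form.

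The main obstacle is precisely the shift-invariance assertion just described; it is conceptually straightforward but requires a careful audit of the iterative construction, including the justification that one may always take $h_{k,k+1}=(1,1)$ when both endpoints of the walk coincide. Once this is settled, finiteness of $D_\mathbf{x}$ when both $G_1$ and $G_m$ are finite is immediate, since $|I_1|,|\Lambda_m|<\infty$ (by finiteness of $\cE$) and all four coset indices appearing in the formulas are then bounded by $|G_1|$ or $|G_m|$.
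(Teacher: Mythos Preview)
Your proposal is correct and follows essentially the same route as the paper's proof: derive $|H_\mathbf{x}|$ from Theorem~\ref{thm:Sch} (and its dual), count the $\L$-classes (resp.\ $\R$-classes) via the $\theta$- (resp.\ $\ol\theta$-) criterion from Corollary~\ref{cor:Green}, and assemble the remaining formulas by egg-box arithmetic. The only cosmetic difference is that the paper defines its counting map $\xi$ on all of $D_\mathbf{x}$ (parametrised by $(j,c,b,\mu)$) and reads off the $\L$-classes directly as fibres, whereas you restrict to $L_\mathbf{x}$ (resp.\ $R_\mathbf{x}$) and count $\H$-classes there; these are clearly equivalent. Your explicit verification of the shift-invariance $\ol\theta(\mathbf{z}_{j_1,c_1},\mathbf{z}_{j_2,c_2},G_m)=\ol\theta(\mathbf{x},\mathbf{x},G_m)$ is in fact more careful than the paper, which silently absorbs this step into the phrase ``Corollary~\ref{cor:Green} implies\ldots''.
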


\begin{proof}
Since (1) is left-right dual to (2), we skip the proof of the former and prove that latter result. From Theorem \ref{thm:Green} we know that 
$$
D_\mathbf{x}=\{(j,c,\lambda_1)\dots(i_m,b,\mu):\ j\in I_1, c\in G_1, b\in G_m, \mu\in\Lambda_m\}.
$$
Define a mapping $\xi:D_\mathbf{x}\to \Lambda_m\times \mathcal{R}_{G_m}((G_1,\mathbf{x},\mathbf{x})\theta)$, where $\mathcal{R}_{G_m}(K)$ denotes the
set of all right cosets of a subgroup $K$ of $G_m$, by
$$
[(j,c,\lambda_1)\dots(i_m,b,\mu)]\xi = (\mu,[(G_1,\mathbf{x},\mathbf{x})\theta]b).
$$
Clearly, this mapping is surjective. Now Corollary \ref{cor:Green} implies that two elements of $D_\mathbf{x}$ map to the same pair under $\xi$ 
if and only if they are $\L$-related. Hence, the required number of $\L$-classes is precisely the size of the image of $\xi$, and the result follows.

In (3), the cardinality of $H_\mathbf{x}$ must be the same as that of its \Sch group, thus the result follows immediately from Theorem \ref{thm:Sch}.
The cardinality of the $\R$-class $R_\mathbf{x}$ is the cardinality $|H_\mathbf{x}|$ multiplied by the number of $\L$-classes obtained in (2); the result
for $|L_\mathbf{x}|$ follows dually. Finally, the cardinality of $D_\mathbf{x}$ is both $|R_\mathbf{x}|$ multiplied by the number of $\R$-classes from
(1) and $|L_\mathbf{x}|$ multiplied by the number of $\L$-classes obtained in (2), thus we obtain (4).
\end{proof}


\section{Open problems}\label{sec:prob}

We would like to finish off the paper with a couple of questions (or, rather, two groups of questions) stemming from the presented material that might be
interesting as a subject of future research.

Firstly, we would very much like to know to which extent is Theorem \ref{thm:Sch} ``sharp''. Namely, let us recall that it states that any \Sch
group of $\ig{\cE}$ for a finite biorder $\cE$ must be a divisor of some maximal subgroup of $\ig{\cE}$; in fact, given $\mathbf{x}\in\ig{\cE}$ it 
identifies a subgroup $G$ of $\ig{\cE}$ and $L\lnor K\leq G$ such that $\Gamma_{H_\mathbf{x}}\cong K/L$. Under certain further assumptions, both
$K$ and $L$ are finitely generated, and, with the presence of further conditions, we can guarantee that the generating sets of both $K$ and $L$ are 
effectively computable from $\cE$. Our questions is, loosely speaking, whether this is all there is to know about these \Sch groups, or there is in fact 
further information about them that is yet to be learned. We also exhibit some weaker versions of this question that might be worth pursuing.

\begin{que}
\begin{itemize}
\item[(1)] Is is true that for any finitely presented group $G$ and its arbitrary divisor $K/L$ there exists a finite biordered set $\cE$ such that there exists
$\mathbf{x}=\ig{\cE}$ of $\D$-fingerprint $(D_1,\dots,D_m)$ such that the maximal subgroup of either $D_1$ or $D_m$ is isomorphic to $G$ , while 
$\Gamma_{H_\mathbf{x}}\cong K/L$?
\item[(2)] Is (1) true at least for divisors $K/L$ arising from finitely generated subgroups $L\lnor K\leq G$?
\item[(3)] Failing to answer (1) or (2), is (1) at least true when $G$ is a finite group?
\end{itemize}
\end{que}

Let us note that it was already proved in \cite[Theorem 9.2]{DGR} that for an arbitrary finitely presented group $G$ and its arbitrary finitely generated subgroup
$K$ there exists a finite biorder $\mathcal{B}_{G,K}$ arising from a finite band (idempotent semigroup) such that there is an element $\mathbf{x}\in\ig{\mathcal{B}_{G,K}}$
of $\D$-fingerprint $(D_1,D_2)$, where $D_1,D_2$ are distinct $\D$-classes of $\mathcal{B}_{G,K}$, both with maximal subgroup isomorphic to $G$, so that 
$\Gamma_{H_\mathbf{x}}\cong K$. In other words, the answer to the question (2) above is ``yes'' when $L$ is the trivial subgroup.

Secondly, another direction for further investigations is spurred by the fact that under the assumption that all non-maximal regular $\D$-classes of $\ig{\cE}$
have finite maximal subgroups (see \cite[Theorem 6.1]{YDG2} and the passage preceding it), the word problem $\ig{\cE}$ if decidable, and, furthermore, as seen
in the paper, practically all ``structural parameters'' of $\ig{\cE}$ (such as the contact automata, vertex groups, the functions $\theta,\ol\theta$, Green's
relations, the \Sch groups, the shapes and sizes of arbitrary $\D$-classes, etc.) are effectively computable. As shown in \cite{GR2,Do2}, this is, for example, 
the case for biordered sets of both the full transformation monoid $T_n$ and of the full partial transformation monoid $PT_n$ on an $n$-element set, a couple of
very natural examples. In this sense we are posing the following problem.

\begin{prb}
Describe completely the structure of $\ig{\cE_{T_n}}$ and $\ig{\cE_{PT_n}}$ and provide explicit solutions for their word problems. In particular, describe 
the vertex groups for their contact automata/graphs, their Green's relations, and all the enumerative parameters of their $\D$-classes.
\end{prb}

Note that the above question for $\ig{\cE_{T_n}}$ is merely a restatement of the first sentence of \cite[Section 9]{GR2}; however, the results accumulated since 
the publication of \cite{GR2} (this paper included) make the achievement of this goal much more realistic than before.

\begin{ackn}
The author is grateful to the anonymous referee for a thorough reading of the manuscript and a number of useful comments and suggestions.
\end{ackn}



\begin{thebibliography}{99}
\frenchspacing

\bibitem{BS}
L. Bartholdi, P. V. Silva,
Rational subsets of groups,
Chapter 23 in: \emph{Handbook of Automata Theory} (ed. J.-E. Pin), to appear.
Available online at: {\tt arXiv:1012.1532}

\bibitem{BMM}
M. Brittenham, S. W. Margolis, J. Meakin,
Subgroups of free idempotent generated semigroups need not be free,
\emph{J. Algebra} \textbf{321} (2009), 3026--3042.

\bibitem{YDG1}
Y. Dandan, I. Dolinka, V. Gould,
Free idempotent generated semigroups and endomorphism monoids of free $G$-acts,
\emph{J. Algebra} \textbf{429} (2015), 133--176.

\bibitem{YDG2}
Y. Dandan, I. Dolinka, V. Gould,
A group-theoretical interpretation of the word problem for free idempotent generated semigroups,
\emph{Adv. Math.} \textbf{345} (2019), 998--1041.

\bibitem{YG}
Y. Dandan, V. Gould,
Free idempotent generated semigroups over bands and biordered sets with trivial products,
\emph{Internat. J. Algebra Comput.} \textbf{26} (2016), 473--507.

\bibitem{DVZ}
J. Delgado, E. Ventura, A. Zakharov,
Intersection problem for Droms RAAGs,
\emph{Internat. J. Algebra Comput.} \textbf{28} (2018), 1129--1162.

\bibitem{Do2}
I. Dolinka,
A note on free idempotent generated semigroups over the full monoid of partial transformations,
\emph{Comm. Algebra} \textbf{41} (2013), 565--573.

\bibitem{DG}
I. Dolinka, R. D. Gray,
Maximal subgroups of free idempotent generated semigroups over the full linear monoid,
\emph{Trans. Amer. Math. Soc.} \textbf{366} (2014), 419--455.

\bibitem{DGR}
I. Dolinka, R. D. Gray, N. Ru\v skuc,
On regularity and the word problem for free idempotent generated semigroups,
\emph{Proc. London Math. Soc. (3)} \textbf{114} (2017), 401--432.

\bibitem{DR}
I. Dolinka, N. Ru\v skuc,
Every group is a maximal subgroup of the free idempotent generated semigroup over a band,
\emph{Internat. J. Algebra Comput.} \textbf{23} (2013), 573--581.

\bibitem{Ea1}
D.~Easdown,
Biordered sets of eventually regular semigroups,
\emph{Proc. London Math. Soc. (3)} \textbf{49} (1984), 483--503.

\bibitem{Ea2}
D.~Easdown,
Biordered sets come from semigroups,
\emph{J. Algebra} \textbf{96} (1985), 581--591.

\bibitem{EG}
J. East, R. D. Gray,
Diagram monoids and Graham–Houghton graphs: Idempotents and generating sets of ideals,
\emph{J. Comb. Theory, Ser. A} \textbf{146} (2017), 63--128.

\bibitem{Er}
J. A. Erdos,
On products of idempotent matrices,
\emph{Glasgow Math. J.} \textbf{8} (1967), 118--122.

\bibitem{FG}
D.~G.~Fitz-Gerald,
On inverses of products of idempotents in regular semigroups,
\emph{J. Austral. Math. Soc.} \textbf{13} (1972), 335--337.

\bibitem{GY}
V. Gould, D. Yang,
Every group is a maximal subgroup of a naturally occurring free idempotents generated semigroup,
\emph{Semigroup Forum} \textbf{89} (2014), 125--134.

\bibitem{Ron}
R. L. Graham, 
On finite 0-simple semigroups and graph theory,
\emph{Math. Syst. Theory} \textbf{2} (1968), 325--339.

\bibitem{GR1}
R. Gray, N. Ru\v skuc,
On maximal subgroups of free idempotent generated semigroups,
\emph{Israel J. Math.} \textbf{189} (2012), 147--176.

\bibitem{GR2}
R. Gray, N. Ru\v skuc,
Maximal subgroups of free idempotent generated semigroups over the full transformation monoid,
\emph{Proc. London Math. Soc. (3)} \textbf{104} (2012), 997--1018.

\bibitem{Gro}
J. L. Gross,
Voltage graphs,
\emph{Discrete Math.} \textbf{9} (1974), 239--246.

\bibitem{Gr}
Z. Grunschlag,
\emph{Algorithms in Geometric Group Theory},
PhD thesis, University of California at Berkeley, 1999.

\bibitem{Ho}
J. M. Howie,
The subsemigroup generated by the idempotents of a full transformation semigroup,
\emph{J. London Math. Soc.} \textbf{41} (1966), 707--716.

\bibitem{HoBook}
J.~M.~Howie,
\emph{Fundamentals of Semigroup Theory},
London Mathematical Society Monographs, New Series, Vol. 12, The Clarendon Press, Oxford University Press, New York, 1995.

\bibitem{Howson}
A. G. Howson, 
On the intersection of finitely generated free groups,
\emph{J. London Math. Soc.} \textbf{29} (1954), 428--434.

\bibitem{La}
T. J. Laffey,
Products of idempotent matrices,
\emph{Linear and Multilinear Algebra} \textbf{14} (1983), 309--314.

\bibitem{Lawson}
M. V. Lawson,
Semigroups and ordered categories, I, The reduced case,
\emph{J. Algebra} \textbf{144} (1991), 422--462.

\bibitem{Lo}
M. Lohrey, 
The rational subset membership problem for groups: a survey, 
in \emph{Groups St. Andrews 2013}, pp. 368--389, London Math. Soc. Lecture Note Ser. Vol. 422, Cambridge Univ. Press, 2015.

\bibitem{LS}
M. Lohrey, B. Steinberg,
The submonoid and rational subset membership problems for graph groups, 
\emph{J. Algebra} \textbf{320} (2008), 728--755.

\bibitem{MRS}
S. W. Margolis, J. Rhodes, P. V. Silva, 
On the Dowling and Rhodes lattices and wreath products,
manuscript, 2017. Available online at: {\tt arXiv:1710.05314}

\bibitem{MS}
S. W. Margolis, B. Steinberg, 
Projective indecomposable modules and quivers for monoid algebras, 
\emph{Comm. Algebra} \textbf{46} (2018), 5116--5135.

\bibitem{McE}
B. McElwee,
Subgroups of the free semigroup on a biordered set in which principal ideals are singletons,
\emph{Comm. Algebra} \textbf{30} (2002), 5513--5519.

\bibitem{Na}
K.~S.~S.~Nambooripad,
Structure of regular semigroups. I.
\emph{Mem. Amer. Math. Soc.} \textbf{22} (1979), no. 224, vii+119 pp.

\bibitem{NP}
K.~S.~S. Nambooripad, F.~Pastijn, 
Subgroups of free idempotent generated regular semigroups, 
\emph{Semigroup Forum} \textbf{21} (1980), 1--7.

\bibitem{Pa1}
F. Pastijn, 
Idempotent generated completely 0-simple semigroups, 
\emph{Semigroup Forum} \textbf{15} (1977), 41--50.

\bibitem{Pa2}
F. Pastijn, 
The biorder on the partial groupoid of idempotents of a semigroup, 
\emph{J. Algebra} \textbf{65} (1980), 147--187.

\bibitem{PuBook}
M. S. Putcha,
\emph{Linear Algebraic Monoids},
London Mathematical Society Lecture Note Series, Vol. 133, Cambridge University Press, Cambridge, 1988.

\bibitem{Pu}
M. S. Putcha,
Products of idempotents in algebraic monoids,
\emph{J. Aust. Math. Soc.} \textbf{80} (2006), 193--203.

\bibitem{ReBook}
L. Renner,
\emph{Linear Algebraic Monoids},
Springer-Verlag, New York, 2005.

\bibitem{RSBook}
J. Rhodes, B. Steinberg,
\emph{The $q$-theory of Finite Semigroups},
Springer-Verlag, New York, 2009.

\bibitem{Ru}
N. Ru\v skuc, 
Presentations for subgroups of monoids, 
\emph{J. Algebra} \textbf{220} (1999), 365--380.

\bibitem{Sch1}
M. P. Sch\"{u}tzenberger,
$D$-repr\'esentation des demi-groupes,
\emph{C. R. Acad. Sci. Paris} \textbf{244} (1957), 1994--1996.

\bibitem{Sch2}
M. P. Sch\"{u}tzenberger,
Sur la repr\'esentation monomiale des demi-groupes,
\emph{C. R. Acad. Sci. Paris} \textbf{246} (1958), 865--867.

\bibitem{Si}
P. V. Silva,
Recognizable subsets of a group: finite extensions and the abelian case,
\emph{Bull. European Assoc. Theor. Comput. Sci.} \textbf{77} (2002), 195--215.

\bibitem{Si20}
P. V. Silva,
personal communication, 2020.

\bibitem{Za}
T. Zaslavsky,
Biased graphs. I. Bias, balance, and gains, 
\emph{J. Comb. Theory Ser. B} \textbf{47} (1989,) 32--52.

\end{thebibliography}
\end{document}